%%%%%%%%%%%%%%%%%%%%%%% file template.tex %%%%%%%%%%%%%%%%%%%%%%%%%
%
% This is a general template file for the LaTeX package SVJour3
% for Springer journals.          Springer Heidelberg 2010/09/16
%
% Copy it to a new file with a new name and use it as the basis
% for your article. Delete % signs as needed.
%
% This template includes a few options for different layouts and
% content for various journals. Please consult a previous issue of
% your journal as needed.
%
%%%%%%%%%%%%%%%%%%%%%%%%%%%%%%%%%%%%%%%%%%%%%%%%%%%%%%%%%%%%%%%%%%%
\RequirePackage{fix-cm}
\documentclass[smallextended]{svjour3}       % onecolumn (second format)
\smartqed  % flush right qed marks, e.g. at end of proof
\usepackage{amsmath,amssymb,amsfonts}
%,undertilde}
\usepackage{accents}
\usepackage[mathscr]{eucal}
\usepackage[titletoc,title]{appendix}
%\usepackage{undertilde}

%new
\usepackage{amscd,psfrag}
\usepackage{yhmath}
%\usepackage[mathscr]{eucal}
%%%

%\usepackage{slashed}
\usepackage{epstopdf}
\usepackage{chngcntr}
\usepackage{indentfirst}	
\usepackage[normalem]{ulem}

\newcommand{\R}{\mathbb{R}}

\newcommand{\na}{\nabla}
\newcommand{\two}{{\rm II}}
\newcommand{\emb}{\hookrightarrow}
\newcommand{\ta}{{\rm tan}}
\newcommand{\nor}{{\rm nor}}
\newcommand{\np}{\na^\perp}
\newcommand{\itm}{\iota^*T\widetilde{M}}
\newcommand{\nae}{\nabla^E}
\newcommand{\ind}{{\rm Ind}}
\newcommand{\id}{\text{Id}}
\newcommand{\gtm}{\Gamma(TM)}
\newcommand{\gtmbar}{\Gamma(T\widetilde{M})}
\newcommand{\glnk}{\mathfrak{gl}(n+k;\R)}
\newcommand{\p}{\partial}
\newcommand{\frako}{\mathfrak{o}}

\newcommand{\onu}{O(\nu, n-\nu)}
\newcommand{\bigonk}{O(\nu+\tau, (n+k)-(\nu+\tau))}
\newcommand{\littleonk}{\frako(\nu+\tau, (n+k)-(\nu+\tau))}
\newcommand{\loc}{{\rm loc}}
\newcommand{\fg}{\mathfrak{g}}
\newcommand{\lxz}{\Lambda^{(x,Z)}}
\newcommand{\dxz}{\mathcal{D}^{(x,Z)}}
\newcommand{\mm}{\mathfrak{m}}
\newcommand{\tth}{\underaccent{\widetilde}{\Theta}}

\newcommand{\bg}{\tilde{g}_0}
\newcommand{\one}{{\rm I}}
\newcommand{\pb}{\widetilde{\p}}

\newcommand{\nn}{\mathfrak{n}}

\newcommand{\weak}{\rightharpoonup}
\newcommand{\e}{\epsilon}
\newcommand{\T}{{\mathcal{T}}}
\newcommand{\G}{\Gamma}
\newcommand{\map}{{\rightarrow}}

\newcommand{\C}{\mathbb{C}}
\newcommand{\dd}{{\rm d}}

\newcommand{\hg}{{\hat{G}}}
\newcommand{\matij}{{{\rm Mat} (I \times J; \C)}}
\newcommand{\K}{\mathcal{K}}

\newcommand{\dvg}{{\,\dd V_g}}
\newcommand{\diff}{{{\rm Diff}}}
\newcommand{\oo}{{\mathcal{O}}}
\newcommand{\gzero}{\tilde{g}_0}
\newcommand{\h}{\mathfrak{h}}
\newcommand{\W}{\mathcal{W}}

%$\underaccent{\tilde}{\mathcal{A}}$

\makeatletter
%%%%%%%%%%%%%%%%%%%%%%%%%%%%%% LyX specific LaTeX commands.
\pdfpageheight\paperheight
\pdfpagewidth\paperwidth

\setlength{\parindent}{0pt}
\counterwithin{figure}{section}

\setlength{\parindent}{28pt}

\newtheorem{convention}[definition]{Convention}
%%%%%%%%%%%%%%%%%%%%%%%%%%
\numberwithin{equation}{section}
\numberwithin{theorem}{section}
\numberwithin{lemma}{section}
\numberwithin{proposition}{section}
\numberwithin{corollary}{section}
\numberwithin{definition}{section}
\numberwithin{remark}{section}

\journalname{Archive for Rational Mechanics and Analysis\, }
\begin{document}

\title{Weak Continuity of the Cartan Structural System and Compensated Compactness on
Semi-Riemannian Manifolds with Lower Regularity\thanks{The research of Gui-Qiang G. Chen was supported in part by
the UK Engineering and Physical Sciences Research Council Award
EP/L015811/1 and the Royal Society--Wolfson Research Merit Award WM090014 (UK). The research of Siran Li was supported in part by
the UK Engineering and Physical Sciences Research Council Award EP/E035027/1.}
}

\titlerunning{Weak Continuity of Cartan Structural System $\&$ Compensated Compactness}        % if too long for running head

\author{Gui-Qiang G. Chen \and  Siran Li}

\institute{G.-Q. G. Chen\at
             Mathematical Institute, University of Oxford, Oxford, OX2 6GG, UK\\
              \email{chengq@maths.ox.ac.uk}           %  \\
           \and
           S. Li \at
              New York University--Shanghai, Office 1146, 1555 Century Avenue, Pudong District, Shanghai 200122, China, and NYU--ECNU Institute of Mathematical Sciences,
Room 340, Geography Building, 3663 North Zhongshan Road, Shanghai 200062, China\\\email{sl4025@nyu.edu}\\
\emph{Present address:}  School of Mathematical Sciences, Shanghai Jiao Tong University, No.~6 Science Buildings,
800 Dongchuan Road, Minhang District, Shanghai 200240, China}

\date{Received: July 9, 2020 / Accepted: March 19, 2021}
\maketitle

\begin{abstract}
We are concerned with the global weak continuity of the Cartan structural
system --- or equivalently, the Gauss--Codazzi--Ricci system --- on semi-Riemannian manifolds
with lower regularity.
For this purpose, we first formulate and prove a geometric compensated compactness theorem
on vector bundles over semi-Riemannian manifolds with lower
regularity (Theorem \ref{thm: generalized quadratic theorem on manifolds-b}),
extending the classical quadratic theorem of compensated compactness. We then deduce the $L^p$
weak continuity of the Cartan structural system for $p>2$:
For a family $\{\W_\varepsilon\}$ of connection $1$-forms on a semi-Riemannian manifold $(M,g)$,
if $\{\W_\varepsilon\}$ is uniformly bounded in $L^p$
and satisfies the Cartan structural system,
then any weak $L^p$
limit of $\{\W_\varepsilon\}$ is also a solution of the Cartan structural system.
Moreover, it is proved that isometric immersions of semi-Riemannian manifolds into semi-Euclidean spaces can be constructed
from the weak solutions of the Cartan structural system or
the Gauss--Codazzi--Ricci system (Theorem \ref{theorem_main theorem, isometric immersions and GCR}),
which leads to the $L^p$ weak continuity of the Gauss--Codazzi--Ricci system on semi-Riemannian manifolds.
As further applications, the weak continuity of Einstein's constraint equations, general immersed hypersurfaces,
and the quasilinear wave equations is also established.

\keywords{\,\,Cartan structural system \and Semi-Riemannian manifolds \and Arbitrary signature \and Semi-Euclidean spaces \and Gauss--Codazzi--Ricci system
\and Lorentzian geometry \and Isometric immersions  \and Weak continuity \and Compensated compactness \and Einstein's constraint equations}

\subclass{\, Primary: 53C50 \and 53C24 \and 53C42 \and 53C21 \and 57R42 \and 35M30 \and 35B35 \and 58A15 \and Secondary:
43A15 \and 43A25 \and 58A17 \and 58K30 \and 58Z05 \and 58J40}
\end{abstract}

\section{\, Introduction}
\label{intro}

We are concerned with isometric immersions of semi-Riemannian manifolds with arbitrary signature
into semi-Euclidean spaces.
We establish the weak continuity of two fundamental systems of nonlinear partial
differential equations (PDEs): the Cartan structural system and the Gauss--Codazzi--Ricci (GCR) system,
which constitute the compatibility equations for the existence of isometric immersions.
	
The isometric immersion problem has been of fundamental importance in the development of modern differential geometry.
It has led to various new techniques and ideas in nonlinear PDEs, nonlinear analysis,
and geometric analysis ({\it cf.} \cite{BGY,gunther,HanHon06,Yau} and the references cited therein).
On the other hand, it has wide applications.
For example, in theoretical physics, the manners in which our $4$-dimensional space-time is immersed in the ambient universe correspond
to different cosmological models ({\it cf.} Mars--Senovilla \cite{mars,mars2}),
and the isometric immersion of round spheres into warped product manifolds is central
to recent versions of quasi-local mass ({\it cf.} Guan--Lu \cite{gl} and Wang--Yau \cite{wy}).
Moreover, the isometric immersions of semi-Riemannian manifolds {\em with lower regularity} are fundamental in many scientific areas.
For example, such immersions
arise in the thin-shell model
for gravitational source and the junction condition for gluing disjoint space-times; see \cite{israel,clarke3,geroch} for the details.
	
In the classical work \cite{Nas56}, Nash established the existence of isometric embeddings of Riemannian manifolds
with $C^k$ metrics, $k \geq 3$,
into the Euclidean spaces of high dimensions.
The analogous problem for semi-Riemannian manifolds ({\it i.e.}, the metrics are not necessarily positive-definite)
is posed as a natural extension.
More importantly, the isometric immersion problem of semi-Riemannian manifolds is fundamental in general relativity
and Lorentzian geometry.
Clarke \cite{clarke2} proved the existence theorem of isometric embeddings of $C^k$ semi-Riemannian manifolds into
semi-Euclidean spaces, under additional hypotheses on the signature.
Despite these general existence theorems,
the analysis for isometric immersions of semi-Riemannian manifolds appears more challenging than its Riemannian analog.
In particular, the Laplace--Beltrami operator is no longer elliptic, thus precludes the standard elliptic PDE machineries.
See Goenner \cite{Goenner}, Greene \cite{Greene}, and the references cited therein for the earlier rigorous mathematical analysis
on isometric immersions of semi-Riemannian manifolds.
	
Motivated by both mathematical and physical importance discussed above, in this paper,
we study the isometric immersions of semi-Riemannian manifolds with lower regularity.
One of the fundamental tools for investigating the isometric immersions is the GCR system
({\it cf.} \cite{BGY,csw1,csw2,Goenner,Janet,lefloch2}), which describes the geometry of the ambient space
in terms of the geometry of the tangential and normal directions of the immersed submanifold.
We are interested in the global weak continuity of the GCR system,
as well as the global weak rigidity of the corresponding isometric immersions and curvatures.
	
The analysis of the GCR system encompasses several challenges,
primarily because they do not have a fixed type --- elliptic, parabolic, or hyperbolic --- in general.
Even in the Riemannian case, when the immersed manifold has dimension higher than $3$,
it is proved by Bryant--Griffith--Yang \cite{BGY} that the GCR system has no definite type.
The novel observation by Chen--Slemrod--Wang in \cite{csw1,csw2} (also see \cite{chenli})
shows that the GCR system for Riemannian manifolds possesses an intrinsic {\em div-curl structure},
so that the compensated compactness techniques
for nonlinear analysis can be applied, which is independent of the types of the system.

In order to employ the compensated compactness techniques  in semi-Riemannian settings, however, we meet with further complications.
First, the effective  proofs of the div-curl lemma rely essentially on the ellipticity of the Laplace--Beltrami
operator; {\it cf.} Evans \cite{evans}, Robbin--Rogers--Temple \cite{rrt}, Kozono--Yanagisawa \cite{kozono},
Chen--Li \cite{chenli}, and the references cited therein.
This does not hold for semi-Riemannian manifolds.
Moreover, the non-trivial signatures of the semi-Riemannian metrics
make it difficult to identify  the div-curl structure globally.

To overcome the new complications, we further exploit the geometry of isometric immersions of semi-Riemannian submanifolds.
Rather than tackling the GCR system directly, we first establish the weak continuity of the Cartan structural system.
This is proved to be equivalent to the GCR system, even for the semi-Riemannian manifolds with lower regularity in $W^{2,p}$.
The Cartan structural system possesses a natural quadratic structure.
For this purpose, we first establish a global, intrinsic
compensated compactness result (Theorem~\ref{thm: generalized quadratic theorem on manifolds-b})
in the setting of vector bundles over semi-Riemannian manifolds, and
then apply it to give a rigorous proof of the weak continuity of the Cartan structural system.
We emphasize the {\em global} and {\em intrinsic} nature of these results,
in the sense that their formulations are independent of local coordinate systems.

The compensated compactness techniques have been developed in the study of nonlinear PDEs in the Euclidean
space $\R^d$, especially for nonlinear conservation laws such as the Euler equations in fluid mechanics;
see \cite{Chen05,daf,evans} and the references therein.
One of the major results in the theory of compensated compactness is the {\em quadratic theorem} in $\R^d$
(see Murat \cite{Murat} and Tartar \cite{tartar}).
For our purpose, we establish a generalized quadratic theorem that is of global and intrinsic nature
on vector bundles.
Our crucial observation is that the first-order differential constraints in the quadratic theorem on $\R^d$
can be replaced by more general assumptions on the principal symbol of the associated differential operators,
while the principal symbol is diffeomorphism-invariant on manifolds.
This leads to an intrinsic formulation of the quadratic theorem on vector bundles over semi-Riemannian manifolds.
	
Other generalizations of the quadratic theorem were established in the literature.
Mi\u{s}ur--Mitrovi\'{c} in \cite{MMit} studied
the weak convergence of quadratic expressions $\sum_{i,j=1}^N q_{ij} u_\varepsilon^i v_\varepsilon^j$,
where $\{u_\varepsilon\}$ and $\{v_\varepsilon\}$ are weakly convergent
in $L^p(\R^d; \R^N)$ and $L^{p'}(\R^d; \R^N)$, respectively, for $\frac{1}{p}+\frac{1}{p'}\leq 1$.
For this, coefficients $q_{ij}, i,j=1,\cdots, N$, are allowed to depend on $x\in \R^d$,
the conditions involve fractional derivatives,
and  the idea of $H$-distributions is used in the proof; also see \S 3 in Mi\u{s}ur \cite{Miu}.
In contrast, our generalized quadratic theorem is geometric and global in nature,
which serves naturally for our purpose to establish the weak continuity of both the Cartan structural system
and the GCR system.

The results and techniques established in this paper have applications to semi-Riemannian geometry,
from the perspectives of both mathematics and physics.
For example, we deduce the weak rigidity of isometric immersions of semi-Riemannian manifolds by using the weak continuity of
the Cartan structural system or the GCR system.
The realizability of isometric immersions of semi-Riemannian manifolds with lower regularity
from the weak solutions of the Cartan structural system or the GCR system (Theorem \ref{theorem_main theorem, isometric immersions and GCR})
is proved along the way.
In addition, we demonstrate the weak continuity properties of Einstein's constraint equations,
quasilinear wave equations, and degenerate hypersurfaces in space-time.

We emphasize that, in this paper, we are concerned mainly with semi-Riemannian manifolds $(M,g)$ with lower regularity,
which means that $M$ is parametrized by $W^{2,p}_\loc$ maps, or that metric $g$ is in $W^{1,p}_\loc \cap L^\infty_\loc$.
The weak continuity of the GCR system and the Cartan structural system is established in such regularity classes
with $p>2$, regardless of the dimension of $M$.
In particular, when $\dim\,M \geq 3$, these weak continuity results cannot be deduced from
the realization theorem of isometric immersions
from the GCR system, or equivalently the Cartan structural system,
since it can be proved so far only under the {\it stronger assumption}: $p>\dim\,M$.
This imposes considerable additional difficulties.
In fact, apart from the realization theorem (Theorem \ref{theorem_main theorem, isometric immersions and GCR}),
we will restrict ourselves only to $p>2$ (rather than $p> \dim\,M$) everywhere else throughout the paper.

We remark in passing that the $W^{2,p}$ continuity of the GCR and Cartan structural systems may also be established
via computing carefully in local coordinate systems, by utilizing the compensated compactness techniques
in the flat space $\R^d$ (see, \emph{e.g.}, Chen--Slemrod--Wang \cite{csw2} and  Robbin--Rogers--Temple \cite{rrt}).
However, the compensated compactness results established in the semi-Riemannian setting
in this paper not only provide a direct intrinsic proof of the $W^{2,p}$ continuity of the Cartan structural systems,
but also are of independent interest.
In particular, the global and intrinsic formulation
of Theorem~\ref{thm: generalized quadratic theorem on manifolds-b} contributes to the theory of compensated compactness
and its further applications.

The rest of this paper is organized as follows:
In $\S \ref{sec: geom prelim}$, we review the Cartan structural system and
the basics of the semi-Riemannian submanifold theory.
The bundle-theoretic perspectives are emphasized.
In $\S \ref{sec: generalised quadratic thm}$, we establish a global intrinsic compensated compactness theorem
on vector bundles over semi-Riemannian manifolds, which is also extended to locally compact Abelian groups.
Employing the results in $\S \ref{sec: generalised quadratic thm}$,
we deduce  the weak continuity of the Cartan structural system in $\S \ref{sec: weak continuity of cartan}$.
Next, in $\S \ref{sec: realisation}$, we solve the realization problem ({\it i.e.}, the construction of
isometric immersions from the GCR system, or equivalently the Cartan structural system) on simply-connected
semi-Riemannian manifolds with lower regularity.
Finally,  in $\S \ref{sec: further app}$, we discuss further applications of the theorems and techniques established in earlier sections.
In particular, we demonstrate the weak continuity of Einstein's constraint equations, quasilinear wave equations with
the null structure, and general hypersurfaces in space-time.
For completeness, the proofs of several semi-Riemannian geometric results and facts,
as well as the proof of Theorem 3.5 (the generalized quadratic theorem on locally compact Abelian groups),
are presented in Appendices A and B.
	
\section{\, The Cartan Structural System and Isometric Immersions of Semi-Riemannian Manifolds}\label{sec: geom prelim}

In this section, we discuss the Cartan structural system.
One of our motivations comes from the isometric immersion problem for semi-Riemannian manifolds:
the Cartan structural system is known to
be equivalent to the GCR system, since both systems are the classical compatibility equations for
the existence of isometric immersions.
The isometric immersion problem is an important topic in theoretical physics and differential geometry.
In particular, it is closely related to the definition of quasi-local mass
in space-time (see Brown--York \cite{by}, Wang--Yau \cite{wy},
and the references therein).

We first review the submanifold theory in semi-Riemannian geometry.
Then we discuss the derivation of the GCR system and the formulation of the Cartan structural system.
Our exposition follows essentially from O'Neill \cite{oneill};
nevertheless, several {\it ad hoc} constructions therein are clarified by using the language of vector bundles.

\subsection{\, Semi-Riemannian Submanifold Theory}\label{subsec: semi-riem geometry prelims}

Let $M$ be an $n$-dimensional manifold. It is said to be {\em semi-Riemannian} if there exists a symmetric, non-degenerate $2$-form field $g$
on the tangent bundle $TM$ with constant index. Then $g$ is known as a {\em semi-Riemannian metric}. The semi-Riemannian metric $g$
is {\em non-degenerate} on $M$ if, for each $x\in M$,
there exists no $v\in T_xM\setminus \{0\}$ such that $g(v,w)=0$ for every $w\in T_xM$.

The {\em index} of the semi-Riemannian metric $g$ on $T_xM$ is defined by
$$
\text{Ind}(g;T_xM) := \max\left\{\dim V\,: \,
  \begin{array}{ll}\text{$V \subset T_xM$ is a vector subspace}\\
                                              \text{and $g|_V$ is negative definite}
                                             \end{array} \right\}.
$$
Clearly, if $M$ is connected, then $\text{Ind}(g;T_xM)$ is constant for all $x\in M$,
which will be written as $\text{Ind}(g)$ in the sequel.
Employing the Gram--Schmidt process to a subset $U \subset M$,
we can find a local orthonormal basis $\{e_i\}_1^n\subset TU$ so that $g$ is diagonalized:
\begin{equation*}
g=\{g_{ij}\}=\delta_{ij}|g_{ij}|\epsilon^j\qquad \text{ for each } i,j\in\{1, 2,\ldots, n\},
\end{equation*}
where ${\epsilon}:=(\epsilon^1,\ldots,\epsilon^n)^\top\in \{-1,1\}^n$ is called the {\em signature} of
metric $g$.
As $g$ is non-degenerate, it has only non-zero entries on the diagonal so that $\text{Ind}(g)$ equals to the number
of ``$-1$'' in signature $\epsilon$.
For simplicity, from now on, the semi-Riemannian manifold $(M,g)$ is always taken to be connected, and  $\text{Ind}(g)$
is called the {\em index of $M$} for the fixed metric $g$.

Let $(\widetilde{M},\tilde{g})$ be a given semi-Riemannian manifold,
and let $M$ be a submanifold via the embedding $\iota: M \emb (\widetilde{M},\tilde{g})$,
{\em i.e.}, both $\iota:  M \emb (\widetilde{M},\tilde{g})$ and $\dd\iota: TM\rightarrow T\widetilde{M}$ are injective.
We say that $(M,\iota^*\tilde{g})$ is a {\em semi-Riemannian submanifold} of $(\widetilde{M},\tilde{g})$,
provided that $\iota^*\tilde{g}$ is non-degenerate on $M$, where $\iota^*\tilde{g}$ denotes the pullback of $\tilde{g}$
defined by
\begin{equation*}
(\iota^*{\tilde{g}})_x(v,w):=\tilde{g}_{\iota(x)}(\dd\iota (v), \dd\iota(w)) \qquad \text{ for each $x\in M$ and $v,w\in T_xM$}.
\end{equation*}

Before further development,
we introduce one notation: For any vector bundle $E$ over $M$,
we write $\G(E)$ for the space of sections of $E$, {\it i.e.},
$s: M \rightarrow E$ such that $\pi \circ s = {\rm id}_M$,
where $\pi: E \rightarrow M$ is the projection of bundle $E$ onto the base manifold.

Next, we consider $\iota^*T\widetilde{M}$, the vector bundle with base manifold $M$ and fiber $T_{\iota(x)}\widetilde{M}$ at each $x\in M$.
Then $\Gamma(\iota^*T\widetilde{M})$ consists of the vector fields in $T\widetilde{M}$ defined along $M$.
In particular,
\begin{equation}\label{eqn decomposition of the pullback bundle}
\itm|_{x}:=T_{\iota(x)}\widetilde{M}=\dd_x\iota(T_{x}M) \oplus  [\dd_x\iota(T_{x}M)]^\perp \cong T_{x}M \oplus [\dd_x\iota(T_{x}M)]^\perp,
\end{equation}
whenever $\iota$ is a {\em local immersion}, {\it i.e.}, $\dd\iota$ is injective in some neighborhood of $x\in M$.
Here the direct sum is taken with respect to the bilinear form $\tilde{g}$ on $T_{\iota(x)}\widetilde{M}$:
\begin{equation*}
\big[\dd_x\iota(T_xM)\big]^\perp := \big\{v \in T_{\iota(x)}\widetilde{M}\,:\,\tilde{g}_{\iota(x)}(v,\dd_x\iota (w))=0
\, \text{ for all } w\in T_{x}M\big\}.
\end{equation*}
Eq. \eqref{eqn decomposition of the pullback bundle} is a special case of Lemma 23
in \cite{oneill}, which is proved by a simple dimension-counting.
It holds only when $\iota^*\tilde{g}$ is non-degenerate, {\it i.e.}, $M$ is immersed into $(\widetilde{M}, \tilde{g})$ as a semi-Riemannian submanifold.
In this case, $TM$ and $\iota^*T\widetilde{M}$ are vector bundles over $M$ and $TM \subset \iota^*T\widetilde{M}$ respectively,
hence the quotient bundle is well-defined.

\begin{definition}
The normal bundle of the isometric immersion $\iota: M \emb \widetilde{M}$ is
\begin{equation*}
TM^\perp := \frac{\iota^*T\widetilde{M}}{TM}.
\end{equation*}
\end{definition}

In view of Eq. \eqref{eqn decomposition of the pullback bundle}, the fiber of $TM^\perp$ at $x\in M$ (written as $T_xM^\perp$) is
isomorphic to $[\dd_x\iota(T_xM)]^\perp$, so that the following isomorphism of vector spaces holds:
\begin{equation}\label{eqn decomposition of the pullback bundle II}
T_{\iota(x)}\widetilde{M} \cong T_xM \oplus T_xM^\perp.
\end{equation}
The canonical projections of $T_{\iota(x)}\widetilde{M}$ onto the first and second factors
are called the {\em tangential} and {\em normal projections}, denoted by
\begin{equation}\label{tan nor 1}
\ta: \itm|_x \rightarrow T_xM, \qquad \nor:\itm|_x\rightarrow T_xM^\perp.
\end{equation}
By naturality, they induce both the projections of vector fields:
\begin{equation}\label{tan nor 2}
\ta: \Gamma(\iota^*T\widetilde{M})\rightarrow \Gamma(TM),\qquad \nor: \Gamma(\iota^*T\widetilde{M})\rightarrow \Gamma(TM^\perp),
\end{equation}
and the projections of vector fields with Sobolev regularity:
\begin{align*}
&\ta: W^{k,p}(M;\itm) \rightarrow W^{k,p}(M;TM),\nonumber\\
& \nor: W^{k,p}(M;\itm) \rightarrow W^{k,p}(M;TM^\perp)
\end{align*}
for $p \in [1,\infty]$ and $k \in \mathbb{Z}$.

Moreover, for notational convenience, we introduce the following conventions:

\begin{convention}\label{convention on iota}
We write the tangential vector fields as $X,Y,Z,\ldots\in \Gamma(TM)$
and the normal vector fields as $\xi,\eta,\zeta,\ldots \in \Gamma(TM^\perp)$.
For a generic vector field not necessarily tangential or normal,
{\it i.e.}, an element in $\Gamma(T\widetilde{M})$ or $\Gamma(\iota^*T\widetilde{M})$,
we use letters $U,V,W$.
Finally, for a bundle $E$ different from $TM$, $TM^\perp$,
and $\itm$, we write $\alpha,\beta,\ldots \in \Gamma(E)$.
\end{convention}

\begin{convention}
Given an isometric immersion $f: M \rightarrow \widetilde{M}$,
write $\{\partial_a\}$, $S$, $g$, $\na$, $R$, $\ldots$ for the geometric quantities on $M$,
and $\{\widetilde{\p}_a\}, \widetilde{S}, \bg, \widetilde{\na}, \widetilde{R}, \ldots$ for the corresponding
quantities on $\widetilde{M}$.
\end{convention}

With the orthogonal splitting of tangent and normal directions under isometric immersions,
we are ready to study the orthogonal splitting of connections.
Let $(\widetilde{M},\tilde{g})$ be a semi-Riemannian manifold, and let $\iota: M \emb \widetilde{M}$ be an immersed semi-Riemannian submanifold.
The Levi--Civita theorem says that there exists a unique affine
connection $\widetilde{\na}:\Gamma(T\widetilde{M})\times \Gamma(T\widetilde{M})\rightarrow \Gamma(T\widetilde{M})$
which is metric-compatible and torsion-free ({\it cf}. \cite{oneill}).
More precisely, the following conditions hold for any smooth function $\varphi:M\rightarrow\R$
and vector fields $U,V,W\in \Gamma(T\widetilde{M})$:

\begin{enumerate}
\item[(i)]
Affine: $\widetilde{\na}_{\varphi V} W = \varphi\widetilde{\na}_V W$ and $\widetilde{\na}_V (\varphi W) = V(\varphi)W + \varphi\widetilde{\na}_V W$;

\smallskip
\item[(ii)]
Compatible with metric: $U\tilde{g}(V,W) = \tilde{g}(\widetilde{\na}_U V,W)+\tilde{g}(V,\widetilde{\na}_U W)$;

\smallskip
\item[(iii)]
Torsion-free: $ \widetilde{\na}_V W - \widetilde{\na}_W V = [V,W]$.
\end{enumerate}

Recall that the connections can be pulled back by using the maps between topological manifolds (see {\it e.g.} \cite{steenrod}).
In particular, $\iota: M \emb \widetilde{M}$ induces the {\em pullback connection} $\iota^*\widetilde{\na}:\Gamma(TM)\times\Gamma(\itm)\rightarrow \Gamma(\itm)$
on the pullback bundle $\itm$, given by
\begin{equation*}
(\iota^*\widetilde{\na})_X(\iota^*\alpha) = \iota^*(\widetilde{\na}_{\dd\iota(X)}\alpha) \qquad \text{ for any $\alpha\in\Gamma(T\widetilde{M})$ and $X\in\Gamma(TM)$}.
\end{equation*}
Hence, for a vector field $V\in\gtmbar$ along $M$, {\it i.e.}, $V\in\Gamma(\itm)$, we have
\begin{equation}\label{eqn pullback connection}
(\iota^*\widetilde{\na})_X V = \iota^*(\widetilde{\na}_{\dd\iota(X)}\dd\iota (V))=\widetilde{\na}_{\dd\iota(X)}\dd\iota (V),
\end{equation}
where $\dd\iota(X)$ and $\dd\iota(V)$ can be viewed as the local extensions of $X\in\gtm$
and $V\in\Gamma(\iota^*T\widetilde{M})$ to the vector fields in $\gtmbar$.

For simplicity, we adopt the slight abuse of notations of systematically dropping the pullback
operator $\iota^\ast$  (see \cite{docarmo,oneill,Ten71}) when no confusion arises.
In effect, this amounts to viewing $M$ as a subset of $\widetilde{M}$,
and $\iota$ as the identity map from $M$ to its image.

\begin{convention}
Let $\iota: (M,g)\emb (\widetilde{M},\tilde{g})$ be an isometric immersion of semi-Riemannian submanifolds.
Then $(\itm, \iota^*\widetilde{\na})$ is replaced by $(T\widetilde{M}, \widetilde{\na})$.
\end{convention}

With the above preparations, we now consider the following decomposition of connections:
\begin{equation*}
\widetilde{\na}_X V=\ta [\widetilde{\na}_X (\ta V)] + \ta [\widetilde{\na}_X (\nor V)]
        + \nor [\widetilde{\na}_X (\ta V)] + \nor [\widetilde{\na}_X (\nor V)]
\end{equation*}
for any $X\in \Gamma(TM)$ and $V\in\Gamma(T\widetilde{M})$,
where both projections $tan$ and $nor$ are as in Eq. \eqref{tan nor 2}.

\begin{definition}\label{def: nabla, II, nabla perp}
Given an isometric immersion $\iota: (M,g) \emb (\widetilde{M}, \tilde{g})$, the tangential connection $\na:\Gamma (TM)\times \Gamma(TM)\rightarrow \Gamma(TM)$,
the second fundamental form ${\rm II}: \Gamma(TM)\times\Gamma(TM)\rightarrow \Gamma(TM^\perp)$,
the shape operator {\rm (}associated to ${\rm II}${\rm )} $S:\Gamma(TM)\times \Gamma(TM^\perp)\rightarrow \Gamma(TM)$,
and the normal connection $\na^\perp:\Gamma(TM)\times\Gamma(TM^\perp)\rightarrow \Gamma(TM^\perp)$ are defined as
\begin{equation*}
\begin{cases}
\na_XY:=\tan \,\widetilde{\na}_XY, \qquad {\rm II}(X,Y):={\rm nor}\, \widetilde{\na}_XY,\\
S_\xi X:=-{\rm tan} \,\widetilde{\na}_X\xi, \qquad \np_X\xi:= {\rm nor} \,\widetilde{\na}_X\xi,
\end{cases}
\end{equation*}
for $X,Y\in\Gamma(TM)$ and $\xi\in\Gamma(TM^\perp)$.
\end{definition}
We note that $\na$ is the Levi--Civita connection on $(M, \iota^*\tilde{g})$,
whenever $\widetilde{\na}$ is the Levi--Civita connection on $\widetilde{M}$.
Moreover, $\two$ and $S$ are related by
\begin{equation*}
\tilde{g}(\two(X,Y), \xi)=\tilde{g}(S_\xi X, Y).
\end{equation*}
In addition, $\two$ is symmetric (equivalently, $S_\xi$ is self-adjoint) on $\Gamma(TM)$.
The Riemann curvature tensor will be introduced in \S 2.2 below.

Finally, with $\mathfrak{gl}(n; \R)$ denoting the space of $n\times n$ real matrices,
we define the {\em semi-orthogonal group of $\R^n_\nu$} as
\begin{equation*}
O(\nu, n-\nu):=\Big\{B\in \mathfrak{gl}(n;\R): B(v,w)=
{\e_{n,\nu}} v\cdot w \text{ for all } v,w\in T\R^n_\nu\Big\},
\end{equation*}
with the signature matrix given by
\begin{equation}\label{signature matrix}
{\e_{n,\nu}} = \text{diag} (\underbrace{-1,\cdots,-1}_{\text{$\nu$ times}}, \underbrace{1,\cdots,1}_{\text{$n-\nu$ times}}).
\end{equation}
In other words, $O(\nu, n-\nu)$ is the group of linear isometries from $\R^n_\nu$ to itself.
Here and in the sequel, $\R^n_\nu$ denotes the {\em semi-Euclidean} space, {\it i.e.}, manifold $\R^n$ equipped with
metric
${\e_{n,\nu}}$. Likewise, the Lie group $O(\tau, k-\tau)$ has the signature matrix:
\begin{equation*}
{\e_{k,\tau}} = \text{diag} (\underbrace{-1,\cdots,-1}_{\text{$\tau$ times}}, \underbrace{1,\cdots,1}_{\text{$k-\tau$ times}}).
\end{equation*}
We also denote by $\R^{n+k}_{\nu + \tau}$ the semi-Euclidean space $\R^{n+k}$ with the metric:
\begin{equation*}
\tilde{g}_0 =
{\e_{n,\nu}}  \oplus
{\e_{k,\tau}}.
\end{equation*}
The direct sum is understood as the block sum of matrices.
Furthermore, we denote the Lie algebra of $O(n, n-\nu)$ as $\mathfrak{o}(n, n-\nu)$.

\subsection{\, Gauss--Codazzi--Ricci System and Isometric Immersions}\label{subsec: GCR}

The isometric immersion problem can be stated as follows:
{\it Given a semi-Riemannian manifold $(M,g)$ and a target semi-Riemannian manifold $(\widetilde{M}, \tilde{g})$ of higher dimension,
seek an immersion $f: (M,g) \emb (\widetilde{M}, \tilde{g})$ such that  $f(M)$ is a semi-Riemannian submanifold of $\widetilde{M}$ with $f^*\tilde{g} = g$.}

\smallskip
A necessary compatibility condition for the existence of  an isometric immersion $f$
is that the Riemann curvature tensor of $\widetilde{M}$
should be splitted nicely in the tangential and normal directions, {\it i.e.}, in $TM$ and $TM^\perp$.
In what follows, we discuss the Riemann curvature on semi-Riemannian manifolds and derive the compatibility equations,
which are known as the GCR system. Again, for our purpose, we focus on the perspectives of vector bundles,
in comparison with \cite{oneill}.
One further convention is introduced for notational convenience:

\begin{convention}\label{convention of metrics}
In the rest of the paper, we write $\langle\cdot,\cdot\rangle$ for $\tilde{g}(\cdot,\cdot), g(\cdot,\cdot)$,
and any other semi-Riemannian metrics, unless further specified.
\end{convention}

Let $(M,g)$ be an $n$-dimensional semi-Riemannian manifold of index $\nu$,
and let $E$ be a vector bundle over $M$ with fibers $F\cong \R^k_\tau$,
the {\em semi-Euclidean space $\R^k$ with index $\tau$}.
Let $\nae$ be an affine connection on bundle $E$,
{\it i.e.}, a linear map
$$
\na^E: \Gamma(TM)\times\Gamma(E)\rightarrow\Gamma(E)
$$
satisfying $\nae_{\phi X}\alpha=\phi\nae_X\alpha$ and $\nae_X(\phi \alpha)=X(\phi)\alpha+\phi\nae_X\alpha$ for any $\phi:M\rightarrow \R$.
This can be compactly written as
\begin{equation*}
\nae(\phi\alpha)=\phi\nae\alpha+\dd\phi\otimes \alpha,
\end{equation*}
once we view $\nae: \Gamma(E)\rightarrow \Omega^1(E):=\Gamma(E\otimes T^*M)$, the space of differential $1$-forms on bundle $E$.
The {\em Riemann curvature on bundle $E$} is given by $R^E:\Gamma(TM)\times\Gamma(TM)\rightarrow\Gamma(\text{End} E)$ as
\begin{equation*}
R^E(X,Y):=[\nae_X,\nae_Y] - \nae_{[X,Y]},
\end{equation*}
where $\text{End} E$ is the {\it endomorphism bundle} on $E$.
That is, $\text{End} E$ is the vector bundle over $M$ with the typical fiber $\mathfrak{gl}(F)$,
the group of linear transforms from $F$ to itself.
Note that $R^E(X,Y,\alpha)\in\Gamma(E)$ for $\alpha\in \Gamma(E)$.
Also, $R^E$ is often written as the $(0,4)$--tensor:
\begin{equation*}
R^E(X,Y,\alpha,\beta):=\langle R^E(X,Y,\alpha),\beta\rangle_E  \qquad\mbox{for $X,Y\in \Gamma(TM)$ and $\alpha,\beta\in \Gamma(E)$},
\end{equation*}
where we write $\langle\cdot,\,\cdot\rangle_E$ to emphasize the bundle metric.

Now we may investigate the orthogonal splitting of the Riemann curvature along
the projections $tan$ and $nor$ (see $\S 2.1$).
Given an isometric immersion $f: (M,g)\rightarrow (\widetilde{M},\tilde{g})$, three vector bundles over $M$ are of interest:
$E=TM$, $TM^\perp$, and $f^*T\widetilde{M}$. We denote the last bundle by $T\widetilde{M}$ in light of Convention \ref{convention on iota}.
We also fix the notations:
\begin{equation*}
\begin{cases}
\na=\na^{TM},\qquad \widetilde{\na}=\na^{T\widetilde{M}}, \qquad \na^\perp=\na^{TM^\perp},\\
R=R^{TM}, \qquad \widetilde{R}=R^{T\widetilde{M}}, \qquad R^\perp=R^{TM^\perp},
\end{cases}
\end{equation*}
where $\na^{TM}$ denotes the Levi--Civita connection on $M$.

In what follows, we are concerned with the special case:
$$
\widetilde{M}=\R^{n+k}_{\nu+\tau}, \qquad \ind(\widetilde{M})=\ind(M)+\ind(\R^k_\tau).
$$
Thus, $\widetilde{R}(X,Y) \in \Gamma(\text{End} T\widetilde{M})$ constantly vanishes so that
\begin{equation}\label{splitting of R bar}
\widetilde{R}(X,Y,Z_1,Z_2)=0,\qquad \widetilde{R}(X,Y,Z,\xi)=0,\qquad \widetilde{R}(X,Y,\xi,\eta) = 0,
\end{equation}
for arbitrary $Z,Z_1,Z_2\in\Gamma(TM)$ and $\xi,\eta\in\Gamma(TM^\perp)$.
Applying projections {\it tan} and {\it nor} to Eq. \eqref{splitting of R bar} and expressing them via $R,R^\perp,\two, S$, and $\na$ as
in Definition \ref{def: nabla, II, nabla perp}, we deduce

\begin{theorem}\label{theorem GCR equations}
The following three equations are equivalent to Eq. \eqref{splitting of R bar}{\rm :}
\begin{align}
& \langle {\rm II}(X,Z_1),\two(Y,Z_2)\rangle - \langle\two(X,Z_2),\two(Y,Z_1)\rangle = R(X,Y,Z_1,Z_2),\label{gauss}\\
& {\na}^\perp_X\two(Y,Z) = {\na}^\perp_Y\two(X,Z), \label{codazzi}\\
& \langle[S_\xi, S_\eta]X,Y\rangle = -R^\perp(X,Y,\xi,\eta) \label{ricci}
\end{align}
for any $X,Y,Z_1,Z_2\in \Gamma(TM)$ and $\xi,\eta\in\Gamma(TM^\perp)$,
where the covariant derivative of $\two$
is defined via the Leibniz rule{\rm :}
\begin{equation*}
{\na}^\perp_X\two(Y,Z) = X(\two (Y,Z)) - \two (\na_X Y, Z) - \two(Y,\na_X Z).
\end{equation*}
\end{theorem}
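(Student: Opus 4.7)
The plan is to establish the equivalence by directly expanding the ambient curvature $\widetilde{R}$ in terms of the decomposition of $\widetilde{\na}$ afforded by Definition~\ref{def: nabla, II, nabla perp} together with the splitting \eqref{eqn decomposition of the pullback bundle II}. The workhorses are the \emph{Gauss formula}
\begin{equation*}
\widetilde{\na}_X Y = \na_X Y + \two(X,Y)\qquad\text{for } X,Y\in\Gamma(TM),
\end{equation*}
and the \emph{Weingarten formula}
\begin{equation*}
\widetilde{\na}_X \xi = -S_\xi X + \np_X \xi\qquad\text{for } X\in\Gamma(TM),\,\xi\in\Gamma(TM^\perp),
\end{equation*}
both of which are immediate consequences of Definition~\ref{def: nabla, II, nabla perp}, once we interpret $\widetilde{\na}$ as the pullback connection on $\itm$ via \eqref{eqn pullback connection} (and tacitly drop $\iota^\ast$ by Convention~2.2). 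The three pairings in \eqref{splitting of R bar} are simply the tangent/normal components of $\widetilde{R}(X,Y)Z$ and $\widetilde{R}(X,Y)\xi$ under \eqref{eqn decomposition of the pullback bundle II}, so verifying the equivalence reduces to computing these components.

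First, for $X,Y,Z_1\in\Gamma(TM)$, I would expand
\begin{equation*}
\widetilde{R}(X,Y)Z_1 = \widetilde{\na}_X\widetilde{\na}_Y Z_1 - \widetilde{\na}_Y\widetilde{\na}_X Z_1 - \widetilde{\na}_{[X,Y]}Z_1
\end{equation*}
by applying Gauss to $\widetilde{\na}_Y Z_1$ and then Gauss$+$Weingarten to the next covariant derivative (since $\two(Y,Z_1)$ is normal). Collecting terms and using the torsion-free identity $[X,Y]=\na_XY-\na_YX$, the tangential part becomes $R(X,Y)Z_1 - S_{\two(Y,Z_1)}X + S_{\two(X,Z_1)}Y$, which upon pairing with $Z_2$ and using $\langle \two(A,B),\xi\rangle=\langle S_\xi A,B\rangle$ yields the Gauss equation \eqref{gauss}. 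The normal part telescopes into
\begin{equation*}
\bigl[\np_X\two(Y,Z_1) - \two(\na_XY,Z_1) - \two(Y,\na_XZ_1)\bigr] - \bigl[X\leftrightarrow Y\bigr],
\end{equation*}
which, under the Leibniz-rule definition in the statement, is exactly $\np_X\two(Y,Z_1) - \np_Y\two(X,Z_1)$, producing Codazzi's equation \eqref{codazzi}.

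Next, for $X,Y\in\Gamma(TM)$ and $\xi\in\Gamma(TM^\perp)$, I would expand $\widetilde{R}(X,Y)\xi$ by applying Weingarten first and then Gauss$+$Weingarten to the mixed terms $S_\xi Y\in\Gamma(TM)$ and $\np_Y\xi\in\Gamma(TM^\perp)$. The normal component reduces to $R^\perp(X,Y)\xi - \two(X,S_\xi Y)+\two(Y,S_\xi X)$; pairing with $\eta$, moving the shape operators across the metric (self-adjointness of $S_\eta$), and identifying the residue as a commutator $[S_\xi,S_\eta]$ gives the Ricci equation \eqref{ricci}. The tangential component of $\widetilde{R}(X,Y)\xi$ yields no new information: by the antisymmetry $\widetilde{R}(X,Y,\xi,Z)=-\widetilde{R}(X,Y,Z,\xi)$ it is already equivalent to the Codazzi pairing $\widetilde{R}(X,Y,Z,\xi)=0$ obtained in the previous step.

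The computations themselves are classical and present no essential obstacle; what requires mild care is the bookkeeping under Convention~2.2 (consistently reading $\widetilde{\na}$ as the pullback connection on $\itm$, so that the Gauss and Weingarten formulas make sense on $M$ rather than only on $\widetilde{M}$) and the normalization conventions for $R^\perp$ and $\two$, which fix the signs appearing in \eqref{gauss}--\eqref{ricci}. Once the decomposition $\widetilde{\na}=\na+\two$ on tangential sections and $\widetilde{\na}=-S+\np$ on normal sections is fixed, the equivalence of \eqref{splitting of R bar} with \eqref{gauss}--\eqref{ricci} follows purely term-by-term, with each of the three tangent–normal pairings matching exactly one of Gauss, Codazzi, and Ricci.
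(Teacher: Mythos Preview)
Your proposal is correct and follows essentially the same approach as the paper: expand $\widetilde{R}$ via the Gauss and Weingarten formulas and read off the tangential and normal components. The paper simply cites O'Neill for the Gauss and Codazzi equations and writes out only the Ricci computation (pairing $\widetilde{R}(X,Y)\xi$ with $\eta$, isolating $R^\perp$, and reducing the cross terms to $\langle S_\eta S_\xi Y,X\rangle-\langle S_\xi S_\eta Y,X\rangle$ via self-adjointness), which matches your outline.
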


A sketched proof of the above theorem is given in Appendix A.1,
which is analogous to the derivation in do Carmo \cite[$\S 6$]{docarmo} for
the Riemannian case. The three equations \eqref{gauss}, \eqref{codazzi}, and \eqref{ricci}
are named after Gauss, Codazzi, and Ricci, respectively, which
form the GCR system.

Three remarks on the GCR system are in order:
\begin{enumerate}
\item[(i)]
The GCR system is a first-order nonlinear PDE system on the semi-Riemannian manifold $(M,g)$,
with given $g$ (hence $\na$ and $R$) and unknowns $(\two, \na^\perp)$.
The nonlinear terms in this system are of forms $\two \otimes \two$, $\two \otimes \na^\perp$, or $\na^\perp \otimes \na^\perp$,
which are of {\em quadratic nonlinearity}.

\smallskip
\item[(ii)]
The GCR system in Theorem \ref{theorem GCR equations} takes the same form as in the Riemannian case{\it ;} see \cite{chenli,docarmo,Spi79}.
Such coincidence, nevertheless, is merely formal.
The GCR system for semi-Riemannian manifolds includes the information of non-trivial signatures, which leads to further analytical difficulties.

\smallskip
\item[(iii)]
The GCR system can be generalized to any vector bundle $E$ in place of $TM^\perp$.
Indeed, since the Riemann curvature is defined for any bundle $E$ ({\it i.e.}, $R^E$),
for any symmetric tensor $\two:\Gamma(TM)\times\Gamma(TM)\rightarrow \Gamma(E)$ and $S:\Gamma(E)\times \Gamma(TM)\rightarrow \Gamma(TM)$ given
by
$$
\langle S_\alpha X, Y\rangle = \langle \two(X,Y),\alpha\rangle \qquad\mbox{for $X,Y\in\Gamma(TM)$ and $\alpha\in \Gamma(E)$},
$$
the GCR system in Theorem \ref{theorem GCR equations} is still well-defined for $X,Y,Z_1,Z_2\in \Gamma(TM)$ and $\xi,\eta\in\Gamma(E)$,
wherein we replace $R^\perp$ by $R^E$ in Eq. \eqref{ricci}. Such equations are called {\em the GCR system on bundle $E$}.

Suppose that the trivial bundle of the ambient semi-Euclidean space $T\R^{n+k}$ admits an orthogonal splitting $TM \oplus E$
as the Whitney sum of vector bundles. Then it is clear that the GCR system on bundle $E$ is necessary for the splitting.
Conversely, we will prove in Theorem~5.1 that, for an abstract vector bundle $E$ over $M$, the GCR system on $E$ is also
a sufficient condition for the local existence of such a splitting.
Moreover, the splitting holds globally if $M$ is simply-connected, under suitable regularity assumptions.
\end{enumerate}

\subsection{\, Cartan Structural System}\label{subsec:Cartan}

Now we introduce the {\em Cartan structural system} for the semi-Riemannian submanifolds,
first appeared in the formalism of exterior differential calculus due to E. Cartan ({\it cf}. \cite{clelland}).
This can be viewed as an equivalent form of the Gauss--Codazzi--Ricci system,
which is more suitable for the weak continuity and realizability considerations in the subsequent sections.

Cartan's formalism ({\em a.k.a.} the method of moving frames) is a classical tool in differential geometry;
see \cite{chern,Spi79,sternberg}.
In particular, it plays a crucial role in the establishment of the realization theorem for Riemannian submanifolds
by Tenenblat \cite{Ten71}, as well as the existence and uniqueness of immersions of smooth manifolds into
affine homogeneous spaces by Eschenburg--Tribuzy \cite{german}.
In this paper, we develop Cartan's formalism for the semi-Riemannian submanifolds.
It serves as the foundation for the Cartan structural system.

To set up Cartan's formalism, we need to introduce the frame field on $TM$ and its co-frame field on $T^*M$,
as well as the field of connection $1$-forms. The following convention is adopted:
\begin{convention}\label{convention on index}
From now on, the superscripts and subscripts obey the following rule{\rm :}
\begin{equation*}
1\leq i,j,k,l,s,t \leq n;\qquad n+1\leq \alpha,\beta,\gamma \leq n+k; \qquad 1\leq a,b,c,e \leq n+k.
\end{equation*}
\end{convention}

Now, let $\{\partial_1,\ldots,\partial_n\}\subset \gtm$ be a frame field for $M$; that is, at each point $P$ on $M$,
$\{\partial_i|_P\}_1^n$ forms an orthonormal basis for the tangent space $T_PM$. The orthonormality means
\begin{equation*}
\langle \partial_i, \partial_j\rangle = \delta^i_j \epsilon^i \qquad \text{ for all } i,j\in\{1,\ldots,n\}
\end{equation*}
in the semi-Riemannian settings.
We write $\{\theta^1,\ldots,\theta^n\}\subset \Gamma(T^*M)$ for the co-frame field:
\begin{equation*}
\theta^i(\partial_j) = \delta_{j}^i.
\end{equation*}
Similarly, we can also take $\{\partial_{n+1},\ldots,\partial_{n+k}\}\subset \Gamma(E)$ to be a frame field
for $E$, {\it i.e.}, orthonormal with respect to the bundle metric $g^E$,
and $\{\theta^{n+1},\ldots,\theta^{n+k}\}\subset \Gamma(E^*)$ to be its co-frame field.

In light of Convention \ref{convention on index}, we define the {\em connection $1$-forms}:
\begin{definition}\label{def of W}
Let $(M,g)$ be a semi-Riemannian manifold, and let $E$ be a vector bundle over $M$ with bundle metric $g^E$.
The connection $1$-form $\W$ is a $1$-form-valued $(n+k)\times(n+k)$ matrix field{\rm :}
\begin{equation*}
\W=\{\omega^a_b\} \in \Gamma(\glnk\otimes T^*M),
\end{equation*}
defined component-wise as
\begin{equation}\label{eqn-connection one form}
\begin{cases}
\omega^i_j(\partial_l) := \theta^j(\na_{\p_l}\p_i) = \epsilon^j \langle\na_{\p_l}\p_i,\p_j\rangle,\\
\omega^i_\alpha(\p_j) := \theta^\alpha(\two(\p_i,\p_j)) = \epsilon^\alpha\langle\two(\p_i,\p_j),\p_\alpha\rangle,\\
\omega^\alpha_\beta(\p_i) := \theta^\beta(\na^E_{\p_i}\p_\alpha) = \epsilon^\beta \langle\na^E_{\p_i}\p_\alpha, \p_\beta\rangle,\\
\omega^\alpha_i:=-\epsilon^i\epsilon^\alpha \omega^i_\alpha.
\end{cases}
\end{equation}
\end{definition}

\begin{remark}
\, We identify
$$
\Gamma(\mathfrak{gl}(n+k; \R)\otimes T^*M) \cong \Gamma(\mathfrak{gl}(TM\oplus E)\otimes T^*M) =: \Omega^1(\glnk).
$$
The right-most expression means the space of $\glnk$-valued differential $1$-forms.
In general, for a Lie algebra $\mathfrak{g}$, the space of differential $k$-forms with entries
in $\mathfrak{g}$ is written as
\begin{equation}\label{Omega k}
\Omega^k(\mathfrak{g}):= \Gamma(\wedge^k T^*M \otimes \mathfrak{g}).
\end{equation}
This notation is needed for subsequent development.
\end{remark}

Now we introduce the two Cartan structural systems for semi-Riemannian manifolds, the second of which is equivalent to the GCR system introduced
in $\S \ref{subsec: GCR}$.
This seems to be known in the semi-Riemannian geometry community;
nevertheless, we have not been  able to
locate a proof in the literature,
so it is needed to present a detailed proof for completeness in Appendix A.3.

\begin{proposition}\label{prop_second structural equation}
The GCR system \eqref{gauss}--\eqref{ricci} is equivalent
to the following system for the connection $1$-form $($known as the second structural system$)${\rm :}
\begin{equation}\label{eqn_second structural eqn}
\dd\W = \W\wedge \W.
\end{equation}
\end{proposition}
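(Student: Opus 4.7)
The plan is to decompose the matrix equation $\dd\W = \W\wedge\W$ into blocks according to the index ranges of Convention \ref{convention on index} and match each block to one of the GCR equations. Writing $c$ for a generic index in $\{1,\ldots,n+k\}$, expanding in each block gives the three scalar identities
\begin{equation*}
\dd\omega^i_j = \omega^i_k\wedge\omega^k_j + \omega^i_\alpha\wedge\omega^\alpha_j,\qquad
\dd\omega^i_\alpha = \omega^i_k\wedge\omega^k_\alpha + \omega^i_\beta\wedge\omega^\beta_\alpha,\qquad
\dd\omega^\alpha_\beta = \omega^\alpha_k\wedge\omega^k_\beta + \omega^\alpha_\gamma\wedge\omega^\gamma_\beta.
\end{equation*}
I would show, one block at a time, that evaluating each side on a pair of frame vectors $(\p_l,\p_m)$ and using the standard exterior-derivative identity $\dd\omega(X,Y)=X(\omega(Y))-Y(\omega(X))-\omega([X,Y])$ together with the defining formulas \eqref{eqn-connection one form} produces precisely Gauss, Codazzi, and Ricci, respectively.

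For the pure-tangential block, the Cartan-type identity on $(M,\na)$ alone gives $\dd\omega^i_j - \omega^i_k\wedge\omega^k_j$ as the Riemann curvature 2-form of $\na$, so pairing both sides with $(\p_l,\p_m)$ and normalising with $\epsilon^j$ yields $R(\p_l,\p_m,\p_i,\p_j)$ on the left. The remaining term $\omega^i_\alpha\wedge\omega^\alpha_j(\p_l,\p_m)$ unfolds via Definition \ref{def of W} and the relation $\omega^\alpha_i=-\epsilon^i\epsilon^\alpha\omega^i_\alpha$ into $\sum_\alpha\epsilon^\alpha[\langle\two(\p_i,\p_l),\p_\alpha\rangle\langle\two(\p_j,\p_m),\p_\alpha\rangle - \langle\two(\p_i,\p_m),\p_\alpha\rangle\langle\two(\p_j,\p_l),\p_\alpha\rangle]$, which collapses by the orthonormal expansion of $\two$ in the normal frame to the right-hand side of the Gauss equation \eqref{gauss}. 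The mixed block is handled analogously: $\dd\omega^i_\alpha(\p_l,\p_m)$ yields the symmetrised $\p$-derivatives of $\langle\two(\p_i,\p_l),\p_\alpha\rangle$, and $\omega^i_k\wedge\omega^k_\alpha+\omega^i_\beta\wedge\omega^\beta_\alpha$ contributes exactly the correction terms $\two(\na_\cdot\p_i,\cdot)$, $\two(\cdot,\na_\cdot\p_i)$, and $\np\two$, so the block is equivalent to $\np_{\p_l}\two(\p_m,\p_i)=\np_{\p_m}\two(\p_l,\p_i)$, which is Codazzi \eqref{codazzi}. The pure-normal block similarly identifies $\dd\omega^\alpha_\beta-\omega^\alpha_\gamma\wedge\omega^\gamma_\beta$ with the curvature of $\np$ (giving $R^\perp$) and recognises $\omega^\alpha_k\wedge\omega^k_\beta$ as the bilinear expression producing the commutator $[S_\xi,S_\eta]$ through the Weingarten relation $\langle\two(X,Y),\xi\rangle=\langle S_\xi X,Y\rangle$, matching Ricci \eqref{ricci}.

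To close the equivalence, I would verify that every term appearing in \eqref{gauss}--\eqref{ricci} has actually been accounted for (no block has been left out and no algebraic identity has been used more than once), and conversely that starting from the GCR system and reversing the computations recovers all three block equations. Since the arguments $(\p_l,\p_m)$ are arbitrary frame vectors and the connection 1-forms are tensorial in the slots opposite to the differentiation direction, equalities at the frame level extend to the full equality of 2-forms.

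The principal obstacle, as I see it, is the careful bookkeeping of the signature factors $\epsilon^i$ and $\epsilon^\alpha$. In particular, the asymmetry between $\omega^i_\alpha$ and $\omega^\alpha_i$ imposed by $\omega^\alpha_i=-\epsilon^i\epsilon^\alpha\omega^i_\alpha$ (which replaces the Riemannian antisymmetry $\omega^i_\alpha+\omega^\alpha_i=0$) must be tracked through every wedge-product expansion, and metric compatibility of $\na$ and $\np$ gives $\omega^i_j+\epsilon^i\epsilon^j\omega^j_i=0$ and $\omega^\alpha_\beta+\epsilon^\alpha\epsilon^\beta\omega^\beta_\alpha=0$ which are needed to raise/lower indices when matching the Gauss and Ricci equations. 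Aside from this sign bookkeeping, the derivation is essentially the translation of the fact that the ambient flat semi-Euclidean connection has zero curvature, expressed in a frame adapted to the splitting $T\widetilde{M}\cong TM\oplus E$.
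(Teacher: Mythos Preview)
Your proposal is correct and follows essentially the same strategy as the paper: both decompose $\dd\W=\W\wedge\W$ into the three blocks (tangential--tangential, tangential--normal, normal--normal), evaluate on frame pairs, and identify each block with Gauss, Codazzi, and Ricci respectively, the only nontrivial bookkeeping being the signature factors $\epsilon^i,\epsilon^\alpha$ coming from Lemma~\ref{lemma_skew symmetry of W}. The paper organises the computation slightly differently---it expands $R(\p_i,\p_j,\p_k)$ and $R^E(\p_i,\p_j,\p_\gamma)$ directly and reads off the structural equations rather than starting from the block form---but this is a cosmetic difference in presentation, not in substance.
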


Its proof relies on a key lemma (see Appendix A.2), which says that $\W$ is a ``semi-skew-symmetric'' matrix:
\begin{lemma}\label{lemma_skew symmetry of W}
$\W=\{\omega^a_b\} \in \Omega^1 (\frako(\nu+\tau; (n+k)-(\nu+\tau)))$.
\end{lemma}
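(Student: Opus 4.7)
The plan is to characterize the Lie algebra $\mathfrak{o}(\nu+\tau, (n+k)-(\nu+\tau))$ concretely and then check block-by-block that the components of $\W$ satisfy the defining relation. Recall that $A \in \mathfrak{o}(\nu+\tau, (n+k)-(\nu+\tau))$ iff $A^\top \tilde{g}_0 + \tilde{g}_0 A = 0$, where $\tilde{g}_0 = \en\oplus\ek$. Written in components with respect to an orthonormal frame this is the condition
\begin{equation*}
\epsilon^a \,\omega^a_b + \epsilon^b \,\omega^b_a = 0 \qquad\text{for all } 1\le a,b\le n+k,
\end{equation*}
i.e., $\omega^a_b = -\epsilon^a\epsilon^b \,\omega^b_a$. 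So the lemma amounts to verifying this identity for the four blocks specified in Definition \ref{def of W}: the tangential block $(i,j)$, the normal block $(\alpha,\beta)$, and the two off-diagonal blocks $(i,\alpha)$ and $(\alpha,i)$.

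For the tangential block, I would evaluate $\omega^i_j(\partial_l) + \epsilon^i\epsilon^j \omega^j_i(\partial_l)$ using the defining formula and factor out $\epsilon^i\epsilon^j$. Metric compatibility of the Levi--Civita connection $\na$ on $(M,g)$, applied to the constant function $\langle\partial_i,\partial_j\rangle=\delta^i_j\epsilon^i$, yields $\langle\na_{\partial_l}\partial_i,\partial_j\rangle + \langle\partial_i,\na_{\partial_l}\partial_j\rangle = 0$, which gives the desired cancellation. The normal block $(\alpha,\beta)$ is entirely analogous: one uses the fact that $\nae$ is compatible with the bundle metric $g^E$ on $E$, applied to the constant $\langle\partial_\alpha,\partial_\beta\rangle_E = \delta^\alpha_\beta \epsilon^\alpha$. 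These two computations directly mimic the Riemannian orthogonality argument, with the signature factors arising from the normalization $\omega^a_b = \epsilon^b\theta^b(\cdots)$ in Definition \ref{def of W}.

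The mixed blocks are in fact built into the definition: $\omega^\alpha_i := -\epsilon^i\epsilon^\alpha\omega^i_\alpha$ by fiat, so the required identity $\omega^i_\alpha = -\epsilon^i\epsilon^\alpha\omega^\alpha_i$ reduces to $(\epsilon^\alpha)^2 = 1$, which is automatic. It is worth noting that the symmetry of the second fundamental form, $\two(\partial_i,\partial_j)=\two(\partial_j,\partial_i)$, together with $\tilde{g}(\two(X,Y),\xi)=\tilde{g}(S_\xi X,Y)$, is the geometric content that makes the \emph{ad hoc} definition of $\omega^\alpha_i$ consistent with the shape operator; this consistency will be tacitly used when one unpacks the Weingarten equation elsewhere.

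I do not anticipate a genuine obstacle here: the computation is routine once the correct characterization of $\mathfrak{o}(\nu+\tau, (n+k)-(\nu+\tau))$ is in place. The only mild bookkeeping issue is the systematic tracking of the signs $\epsilon^a$ coming from raising/lowering indices with the diagonal signature matrix, since in the semi-Riemannian setting one cannot simply identify a matrix with its transpose up to sign, as one does in the Riemannian case $\nu=\tau=0$. Writing out the identity in the form $\epsilon^a\omega^a_b = -\epsilon^b\omega^b_a$ from the outset keeps all signs transparent and reduces the proof to three short lines per block.
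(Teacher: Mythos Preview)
Your proposal is correct and follows essentially the same approach as the paper's proof in Appendix~A.2: both reduce to the componentwise identity $\epsilon^a\omega^a_b = -\epsilon^b\omega^b_a$, verify the tangential and normal blocks via metric compatibility of $\na$ and $\na^E$ respectively, and note the mixed block holds by definition. The only minor difference is that the paper first derives the characterization $\e_{n,\nu}A^\top + A\,\e_{n,\nu}=0$ of the Lie algebra by differentiating a curve in the group, whereas you quote it as known.
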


For subsequent developments, we note that $\W$ can be schematically represented in the block-matrix form:
\begin{equation}\label{schematic rep for W}
\{\omega_a^b\}_{1\leq a,b \leq k+n} = \begin{bmatrix}
\omega^i_j & \omega^\alpha_i\\[1mm]
\omega_\alpha^i & \omega^\beta_\alpha
\end{bmatrix} = \begin{bmatrix}
\theta^j(\na_\bullet \p_i) & S_{\partial_\alpha}\p_i\\[1mm]
-S_{\partial_\alpha}\p_i & \theta^\beta (\na^E_\bullet \p_\alpha)
\end{bmatrix}.
\end{equation}

\begin{remark}$\,$
System \eqref{eqn_second structural eqn} is understood {\em as an equality on $\Omega^2(\fg)$}.
On the left-hand side, the exterior differential $\dd$ is viewed as acting only on the $T^*M$ factor
if $\W \in \Omega^1(\mathfrak{g})$, where $\mathfrak{g}=\mathfrak{gl}(n+k; \R)$ in Eq. \eqref{Omega k}.
Then $\dd\W \in \Omega^2(\fg)$ and is given by
\begin{equation*}
\dd\W(U,V):= U(\W(V)) - V(\W(U)) + \W([U,V]) \quad \text{ for all } U, V \in \Gamma(TM).
\end{equation*}
On the right-hand side, the wedge product on $\Omega^1(\fg)$ is taken by combining the wedge product
on the $T^*M$ factor and the matrix multiplication on the $\fg$ factor in Eq. \eqref{Omega k}. That is,
\begin{equation*}
(\W\wedge \W) (U,V):=\W(U)\cdot \W(V)-\W(V)\cdot \W(U) \quad \text{ for all } U, V \in \Gamma(TM).
\end{equation*}
\end{remark}

So far, we have established the equivalence between the GCR system and system \eqref{eqn_second structural eqn}.
It is known as the {\em second} structural system.
In fact, the {\em first} structural system consists of the following identities on $\Omega^1(\mathfrak{gl}(n;\R))$:
\begin{equation}\label{eqn_first structure eq}
\dd\theta = \theta \wedge \W.
\end{equation}
This is equivalent to the torsion-free property of connection $\na$.
As this property is independent of metrics (regardless of Riemannian or semi-Riemannian),
it does not provide additional information to the isometric immersions.
The proof is standard and is sketched in Appendix A.4.

In the rest of the paper, we always refer to the second structural system \eqref{eqn_second structural eqn} as the Cartan structural system.
In $\S \ref{sec: weak continuity of Cartan's structural eq}$, we establish its global weak continuity.

\medskip
\section{\, Weak Continuity of Quadratic Functions on Semi-Riemannian Manifolds}
\label{sec: generalised quadratic thm}

In order to establish the weak continuity of the Cartan structural system on semi-Riemannian manifolds with lower regularity,
we need to pass to the weak limit of the quadratic nonlinear term $\W \wedge \W$,
where $\W$ is the connection $1$-form in Proposition \ref{prop_second structural equation}.
We establish a geometrically intrinsic compensated compactness theorem on vector bundles over the semi-Riemannian manifold
and apply it to develop
a geometric, global approach to our problem. This is the main goal of this section.

Our generalized quadratic theorem concerns the weakly convergent $L^2$ sections
of a vector bundle $E$ over a semi-Riemannian manifold $M$.
Its prototype is the quadratic theorem {\it \`{a} la} Tartar \cite{tartar} on the Euclidean space $\R^n$.
In order to formulate it globally and intrinsically,
two difficulties immediately arise:

\begin{enumerate}
\item[(i)]
Being endowed with a semi-Riemannian metric, $M$ is a {\em real} manifold.
However, our
proof is based on Fourier analysis below involving
factor $i=\sqrt{-1}$, which has to be carried out over $\C$.

\smallskip
\item[(ii)]
The Fourier transform cannot be defined globally on a generic semi-Riemannian manifold.
For $u\in L^2(M; E)$, one way
we can do is to define
\begin{equation*}
\hat{u}(x, \xi):= \int_{M} u(y)e^{-2\pi i \langle {\exp^{-1}_x(y)},\xi\rangle} \,\dd V_g(y)
\qquad\text{ for $x \in M, \xi \in T_x^*M$},
\end{equation*}
where $\exp_x: T_xM \rightarrow M$ is the exponential map on the manifold,
$\langle\cdot,\, \cdot\rangle$ is the paring of $TM$ and $T^*M$ given by metric $g$,
and $\dvg$ is the volume form of $g$.
However, it is only well-defined at $x\in M$
up to the first conjugate point of $x$,
for which $\exp^{-1}_x$ can be specified unambiguously.
\end{enumerate}

The above considerations call for a quadratic theorem on real manifolds,
for which the differential constraints are formulated globally and intrinsically.
For this purpose, we introduce three new ingredients:
\begin{itemize}
\item
The (principal) symbol of a differential operator,

\smallskip
\item
A quadratic polynomial defined globally on vector bundles,

\smallskip
\item
Complexifications of vector bundles and quadratic polynomials.
\end{itemize}

The rest of this section is organized as follows:
We first present the definitions and basic properties of the principal symbol,
quadratic polynomials, and the Sobolev norms of sections over semi-Riemannian manifolds.
Then our generalized quadratic theorem
is first stated and proved over a semi-Riemannian manifold with a $C^\infty$
metric ({\it cf.} Theorem \ref{thm: generalized quadratic theorem on manifolds})
and is then extended
over a semi-Riemannian manifold with a non-degenerate $L^\infty$
metric ({\it cf.} Theorem \ref{thm: generalized quadratic theorem on manifolds-b}).
From now on, let $M$ be a semi-Riemannian manifold, and let $E$ and $F$ be two real vector bundles over $M$.

\medskip
\noindent
{\bf Principal Symbols.} We collect only some basic facts here, and refer to \cite{albin} for the details.

Denote $\T \in \diff^m(M; E,F)$ as an arbitrary differential operator $\T$ of order $m$ that maps $E$-sections to $F$-sections:
$$
\T: \Gamma(E) \rightarrow \Gamma(F).
$$
It is a crucial observation in micro-local analysis that $\sigma_m(\T)$, the {\em principal symbol} of $\T$,
can be defined intrinsically.
Indeed, for any $\xi \in T_x^*M$, we may choose a function $f\in C^\infty(M)$ such that $\dd_xf=\xi$,
and then set
\begin{equation}\label{def of ppl symbol}
\sigma_m(\T)(x,\xi):= \lim_{t\rightarrow \infty} \frac{[e^{-2\pi itf}\circ \T \circ e^{2\pi itf}](x)}{t^m}.
\end{equation}
It is easy to check that $\sigma_m(\T)(x,\xi) \in {\rm Hom}(E_x, F_x)$ for any given $\xi$
and that the definition is independent of the choice of $f$.
Here and hereafter, $E_x \cong \R^J$ and  $F_x \cong \R^I$ denote the fiber of $E$ and $F$ at point $x\in M$, respectively,
and ${\rm Hom}(E_x, F_x)$ denotes the space of vector space homomorphisms from $E_x$ to $F_x$.
Moreover, $\sigma_m$ is a  homogeneous polynomial of order $m$ on each fiber of $T^*M$:
\begin{equation*}
\sigma_m(\T)(x,\lambda\xi) = |\lambda|^m \sigma_m(\T)(x,\xi)\qquad
\text{for all $x\in M, \xi \in T_x^*M$, $\lambda \in \C$}.
\end{equation*}

More abstractly, denoting $\mathcal{P}_l(V,W)$ as the vector space of $l$-degree homogeneous polynomials
between the vector bundles $V$ and $W$,
the principal symbol map $\sigma_m$ defines the following vector space homomorphism:
\begin{equation*}
\sigma_m: \diff^m(M; E,F) \rightarrow \mathcal{P}_m (T^*M; {\rm Hom}(E;F^\C)),
\end{equation*}
where $F^\C:=F\otimes_{\R}\C$ is the complexified vector bundle, which is necessary
since $i=\sqrt{-1}$ appears in the definition of $\sigma_m$ in Eq. \eqref{def of ppl symbol}.
We adopt this abstract language in order to emphasize the global, intrinsic nature
of the principal symbol.

For the application in \S 5, we now discuss the following example:
The exterior differential operator $\T=\dd: \wedge^q T^*M \rightarrow \wedge^{q+1} T^*M$.
In fact, we have
\begin{equation*}
\dd \in \diff^1(M;\,\wedge^qT^*M, \wedge^{q+1}T^*M),
\end{equation*}
whose the principal symbol $\sigma_1(\dd)$ is given by
\begin{equation*}
[\sigma_1(\dd)(\xi)](\omega) = -2\pi i\xi \wedge \omega
\qquad \text{ for $\xi \in T^*M$ and $\omega \in \wedge^qT^*M$}.
\end{equation*}
Owing to the presence of $i=\sqrt{-1}$, we view the exterior algebra in the range of $\dd$ as being complexified:
For each $\xi \in T^*M$, $\sigma_1(\dd)(\xi) \in \mathcal{P}_1(\wedge^qT^*M;\, \wedge^{q+1}T^*M\otimes \C)$.
In this case, notice that $\sigma_1(\dd)(\xi) = -2\pi i \xi \wedge$, which is indeed a $1$-homogeneous polynomial
of operators from $q$-tensors to  complexified $(q+1)$-tensors.

\medskip
\noindent
{\bf Intrinsic Formulation of Quadratic Polynomials.}
Now we define
a quadratic polynomial on a vector bundle $E$:

\begin{definition}
Let $E$ be a vector bundle over a real manifold $M$.
A map $Q: \Gamma(E) \rightarrow \C$ is a quadratic polynomial on $E$ if it factors as
\begin{align*}
Q: \Gamma(E) \stackrel{j}\longrightarrow \Gamma(E \otimes E) \stackrel{\mathbf{q}}\longrightarrow \C,
\end{align*}
where $j(s)=(s,s)$ is the natural inclusion of the diagonal, and $\mathbf{q} \in \Gamma({\rm Hom}(E\otimes E; \C))$ is
conjugate $1$-homogeneous in each argument{\rm :}
\begin{align*}
\mathbf{q}(\lambda s_1, s_2) = \lambda \mathbf{q}(s_1, s_2),\qquad
\mathbf{q}(s_1, \mu s_2) = \overline{\mu} \mathbf{q}(s_1, s_2)
\end{align*}
for all $s_1, s_2 \in \Gamma(E)$ and $\lambda, \mu \in \C$.
In this case, we write $Q \in \mathcal{P}_2(E; \C)$.
\end{definition}

Such constructions remain valid for $\C$ replaced by $\R$, in which $Q$ is said to be a real quadratic polynomial on $E$.
It follows from the definition that any quadratic polynomial $Q$ is $2$-homogeneous:
\begin{equation*}
Q(\lambda s) = |\lambda|^2 Q(s)\qquad \text{ for all } s \in \Gamma(E) \text{ and } \lambda \in \C.
\end{equation*}

Moreover, suppose that $U \subset M$ is a trivialized chart for the vector bundle $E$ of degree $J$, {\it i.e.},
there exists a diffeomorphism:
$$
\Phi: E \supset \pi^{-1}(U)\stackrel{\sim}\longrightarrow U \times \C^J.
$$
Then, for $s =\Phi^{-1}(x,z)\in U \times \C^J$ with $(x,z) \in U \times \C^J$,
the value of the quadratic polynomial $Q$ at $s$ is given by
\begin{equation}\label{Q quadratic-new}
Q(s)= \sum_{j,k=1}^J Q_{jk}(x) z^j \overline{z^k} \qquad \text{ with } Q_{jk} \in C^\infty(U),
\end{equation}
so that the local representation of $Q$ is obtained.

\medskip
\noindent
{\bf Sobolev Norms over Semi-Riemannian Manifolds.}
Now let us explain the construction of Sobolev norms (of sections of vector bundles) over semi-Riemannian manifolds.

Let $(M,g)$ be a semi-Riemannian manifold.
As we are concerned only with the local Sobolev spaces over $M$ in this paper
(see Theorems~\ref{thm: generalized quadratic theorem on manifolds}, \ref{thm: weak continuity of Cartan's structural eq}, and \ref{theorem_weak rigidity}),
without loss of generality, we may assume $M$ to be compact.
Let $\mathfrak{U}:=\{U_j\}_{j=1}^J$ be an atlas of coordinate charts on $M$.
Given an arbitrary $(r,s)$-tensor field ${\bf T}$ on $(M,g)$,
by restricting to each chart in $\mathfrak{U}$, one may express it in local coordinates by ${\bf T}^{i_1,\ldots, i_r}_{j_1, \ldots, j_s}$.
More precisely, let
$\{\frac{\partial}{\partial x^i}\}_{i=1}^n$
be an local orthonormal basis for $g$,
{\it i.e.}, $g(\frac{\partial}{\partial x^i},\frac{\partial}{\partial x^j})=\epsilon^j\delta_{ij}$ (no summation)
with $\epsilon=(\epsilon^1, \ldots, \epsilon^n)$
as the signature of $g$,
and let $\{\dd x^i\}$ be the co-frame dual to $\{\frac{\partial}{\partial x^i}\}$ via $g$. Then
\begin{align*}
{\bf T}^{i_1,\ldots, i_r}_{j_1, \ldots, j_s}
:= {\bf T}\big(\frac{\partial}{\partial x^{j_1}} \otimes \ldots \otimes  \frac{\partial}{\partial x^{j_s}}
   \otimes \dd x^{i_1}\otimes \ldots \otimes \dd x^{i_r}\big).
\end{align*}
The inner product of two $(r,s)$-tensor fields ${\bf T}$ and ${\bf S}$ on $(M,g)$ is given by
\begin{align}\label{def, inner product of tensors}
\langle{\bf T}, {\bf S}\rangle_g
:= {\bf T}^{i_1,\ldots, i_r}_{j_1, \ldots, j_s} {\bf S}^{a_1,\ldots, a_r}_{b_1, \ldots, b_s}
g_{i_1, a_1}\cdots g_{i_r, a_r} g^{j_1,b_1}\cdots g^{j_s,b_s},
\end{align}
where ${\bf T}=\{{\bf T}^{i_1,\ldots, i_r}_{j_1, \ldots, j_s}\}$ and ${\bf S} = \{{\bf S}^{a_1,\ldots, a_r}_{b_1, \ldots, b_s}\}$
in the local coordinates of $\mathfrak{U}$. Then we set
\begin{equation*}
|{\bf T}|_g := \sqrt{|\langle{\bf T}, {\bf T}\rangle_g|\,}.
\end{equation*}
Note that Eq.\,\eqref{def, inner product of tensors} can be readily interpreted as an $L^1_\loc$ function
when $g$ is invertible {\it a.e.},  $g_{ij}$ lies in $L^\infty$ for each $i,j$,
and ${\bf T}^{i_1,\ldots, i_r}_{j_1, \ldots, j_s}$ and ${\bf S}^{a_1,\ldots, a_r}_{b_1, \ldots, b_s}$ lie in $L^p$, $p\geq 2$,
for all possible indices $i_1,\ldots, i_r, j_1, \ldots, j_s, a_1,\ldots, a_r$, and $b_1, \ldots, b_s$.

Now, take a scalar function $f: (M, g)\,\map\, \R$.
Similar to the Riemannian case ({\it cf.} Chapter 2 in Hebey \cite{h-book}),
we define its $W^{k,p}$ norm, $k=0,1,2,\ldots$, by
\begin{equation}\label{Wk,p norm}
\|f\|_{W^{k,p}(M,g)}
:= \begin{cases} \sum_{m=0}^k \Big\{\displaystyle \int_M \big(|\na^mf|_g\big)^p \,{\rm d}V_g\Big\}^{\frac{1}{p}}\qquad\text{for  $p\in[1,\infty)$},\\[2mm]
\sum_{m=0}^k \text{ess\,sup}_{M} \, |\na^m f|_g\qquad\text{for  $p=\infty$}.
\end{cases}
\end{equation}
In the above, $\na^m:=\overbrace{\na\circ\ldots\circ\na}^{\text{$m$ times}}$ denotes the iterated covariant derivatives,
and the semi-Riemannian volume form is
\begin{equation}\label{volume form}
{\rm d}V_g := \sqrt{|\det\,g|}\,{\rm d}\mathcal{L}^n
\end{equation}
on each local chart of $\mathfrak{U}$, with  the Lebesgue measure $\mathcal{L}^n$.
The integration of a scalar function on $M$ with respect to ${\rm d}V_g$ is defined in the standard way,
by using an arbitrary partition of unity subordinate to $\mathfrak{U}$. The Sobolev space $W^{k,p}(M,g)$ is the completion of $C^\infty(M)$
under the norm in Eq.\,\eqref{Wk,p norm}. For $k<0$ and $p\in[1,\infty]$, $W^{k,p}(M,g)$ is defined as the dual space of $W^{-k,p'}(M,g)$,
where $\frac{1}{p}+\frac{1}{p'}=1$.

A tensor field ${\bf T}$ on $(M,g)$ is said to have $W^{k,p}$--regularity if and only if ${\bf T}^{i_1,\ldots, i_r}_{j_1, \ldots, j_s}\in W^{k,p}(M,g)$
for all indices $i_1, \ldots, i_r$, and $j_1, \ldots, j_s$. Similarly, a connection on $TM$ is $W^{k,p}$ if and only if its Christoffel symbols
$\Gamma^\alpha_{\beta\gamma} \in W^{k,p}(M,g)$ for all $\alpha,\beta$, and $\gamma$.
Given a vector bundle $E$ over $(M,g)$ equipped with the bundle metric $g^E$, we write $W^{k,p}(M,g; E,g^E)$ for the space of $E$-sections
with $W^{k,p}$ regularity, defined in an analogous manner as for tensor fields by considering trivialized charts for $E$.

We remark that the above definition of the $W^{k,p}$--norms may depend on the atlas $\mathfrak{U}$ and the trivialization of bundle $E$.
Nonetheless, all these norms are equivalent modulo constants depending only on the differentiable structure of $M$.
Thus, the corresponding Sobolev spaces are identical vector spaces with equivalent topologies; in particular, they are independent
of local coordinates.

From now on, we assume that the semi-Riemannian metric $g$ lies in $L^\infty_\loc$,
with the non-degeneracy condition (see \S \ref{subsec: semi-riem geometry prelims})
understood in the {\it a.e.} sense. This is a very natural and mild condition,
which suggests that $M$ as a metric space does not contain interior infinity points.
As a consequence, $\det g$ and $g^{-1}$ ({\it e.g.}, obtained from the Cramer's rule) are also in $L^\infty_\loc$, and
hence $\dvg$ defined in Eq.\,\eqref{volume form} is an $L^\infty_\loc$ differential $n$-form.

\medskip
With the preceding preparations, we now state our geometric quadratic theorem
on vector bundles over a semi-Riemannian manifold, first with a $C^\infty$ metric $g$.

\begin{theorem}\label{thm: generalized quadratic theorem on manifolds}
Let $M$ be a semi-Riemannian manifold with a $C^\infty$ metric $g$.
Let $E$ and $F$ be two real $C^\infty$ vector bundles over $M$.
Consider a family of $E$-sections $\{u_\varepsilon\} \subset L^2_\loc (M; E)$,
a differential operator $\T \in \diff^m(M; E, F)$ for some $m \in \R_{+}$ with the principal
symbol $\sigma_m(\T): T^*M \rightarrow {\rm Hom}(E; F^\C)$, and a quadratic
polynomial $Q : \Gamma(E) \rightarrow \R$. If the following conditions hold{\rm :}
\begin{enumerate}
\item[{\em (C1)}]
$u_\varepsilon \weak u$ weakly in $L^2_\loc(M;E)$,
\item[{\em (C2)}]
$\{{\T u_\varepsilon}\}$ is pre-compact in $H^{-m}_{\rm loc}(M; F)$,
\item[{\em (C3)}]
$Q\circ s=0$ for all $s \in \Lambda_\T$, where the {\em cone} of $\T$ is defined by
\begin{equation*}
\Lambda_\T:=\big\{s \in \Gamma(E)\,:\,\sigma_m(\T)(\xi)(s) = 0
\text{ for some $\xi \in T^*M \setminus \{0\}$}\big\},
\end{equation*}
\end{enumerate}
then, for any $\psi \in C^\infty_c(M)$,
\begin{equation*}
\lim_{\varepsilon\rightarrow 0} \int_M  (Q\circ u_\varepsilon)(x)\,\psi(x)\,\dvg(x)
= \int_M  (Q\circ u)(x)\,\psi(x)\,\dvg(x).
\end{equation*}
\end{theorem}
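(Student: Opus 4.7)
The plan is to reduce the intrinsic statement to a local, coordinate-based version of the Tartar--Murat quadratic theorem with variable coefficients. Since the conclusion is tested against an arbitrary $\psi \in C^\infty_c(M)$, it suffices to handle a fixed compact $K \subset M$. Choose a finite atlas $\{(U_\alpha,\varphi_\alpha)\}$ covering $K$ over which both $E$ and $F$ trivialize, together with a subordinate partition of unity $\{\chi_\alpha\}$. Writing $\psi = \sum_\alpha \psi \chi_\alpha$ and $u_\varepsilon = \sum_\alpha \chi_\alpha u_\varepsilon$, it suffices to show
\begin{equation*}
\lim_{\varepsilon \to 0} \int_{U_\alpha} (Q \circ u_\varepsilon)\,\psi\chi_\alpha\, \dvg
= \int_{U_\alpha} (Q\circ u)\,\psi\chi_\alpha\,\dvg
\end{equation*}
for each $\alpha$. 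On each such chart, $\dvg = \sqrt{|\det g|}\,{\rm d}\mathcal{L}^n$ with smooth positive density, so the integral differs from a flat $\R^n$ integral only by a smooth factor, which plays no role in the limit argument.

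In a fixed trivialization, $u_\varepsilon$ becomes a vector-valued function $U_\alpha \to \R^J$ with $u_\varepsilon \weak u$ in $L^2_\loc$, the operator $\T$ becomes a matrix-valued differential operator of order $m$ with smooth coefficients whose principal symbol is the intrinsically defined $\sigma_m(\T)(x,\xi) \in {\rm Hom}(\R^J;\C^I)$, and $Q$ becomes a quadratic form $Q(x,z) = \sum_{j,k} Q_{jk}(x) z^j \overline{z^k}$ with smooth coefficients as in Eq.~\eqref{Q quadratic-new}. Since $\sigma_m(\T)$ is intrinsic, the assumption (C3) translates pointwise: for every $x \in U_\alpha$ and every $v \in \R^J$ such that $\sigma_m(\T)(x,\xi)v = 0$ for some $\xi \in T^*_xM \setminus \{0\}$, one has $Q(x,v) = 0$. (If not, one could produce a smooth section $s \in \Gamma(E)$ with $s(x_0) = v$ violating the global cone hypothesis, using local extension and a cutoff.) Assumption (C2), by localization and commutator estimates in $H^{-m}_\loc$, passes to $\T(\chi_\alpha u_\varepsilon)$ up to a relatively compact error.

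The local problem is then the classical compensated compactness theorem for a variable-coefficient $m$-th order system, proved via the frozen-coefficient/Fourier-localization method. After multiplying by a further cutoff and working in $\R^n$, one freezes the coefficients at an arbitrary base point $x_0$, applies Plancherel to the difference $Q(u_\varepsilon) - Q(u)$, and uses the orthogonal decomposition of $\widehat{u_\varepsilon - u}(\xi)$ relative to $\ker \sigma_m(\T)(x_0,\xi)$: the component in the orthogonal complement is controlled by $\widehat{\T u_\varepsilon}$ via the (now invertible) restriction of the symbol, hence enjoys strong $L^2_\loc$ convergence by (C2); the component in the kernel lies asymptotically in the cone $\Lambda_\T$, where $Q$ vanishes by the translated (C3). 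Cross terms are handled by an equi-integrability argument using that $\{u_\varepsilon\}$ is uniformly bounded in $L^2_\loc$. Summing over the frozen-coefficient pieces via an additional partition of unity controls the variable-coefficient error terms by a standard pseudodifferential commutator estimate.

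The main obstacle I anticipate is the rigorous handling of the freeze-and-perturb step at order $m > 1$ together with the complexification required by the principal symbol: one must ensure that the orthogonal projection onto $\ker \sigma_m(\T)(x_0,\xi)$ inside $\R^J \otimes \C$ yields a bounded Fourier multiplier away from $\xi = 0$ (a low-frequency cutoff takes care of $\xi$ near $0$, using the $L^2_\loc$ bound on $u_\varepsilon$), and that the Hermitian extension of $Q$ to $E \otimes \C$ still vanishes on the complexified cone (which follows from conjugate-bilinearity of $\mathbf{q}$ combined with (C3)). Once this is set up, summation over the cover through $\{\chi_\alpha\}$ and the partition of unity argument yield the global conclusion.
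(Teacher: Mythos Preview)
Your reduction to a single trivialized chart via partition of unity, together with the observation that the semi-Riemannian volume form contributes only a smooth positive factor, is correct and is precisely how the paper begins (Steps~1--2 of its proof). The localization of (C2) via the commutator $[\T,\chi_\alpha]\in\diff^{m-1}$ and Rellich is also the same.

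The gap is in your core local step. You propose to decompose $\widehat{u_\varepsilon-u}(\xi)$ orthogonally with respect to $\ker\sigma_m(\T)(x_0,\xi)$ and to control the $(\ker)^\perp$-component via a left inverse of the symbol. This fails in general because the rank of $\sigma_m(\T)(x_0,\xi)$ need not be constant on the cosphere, so the pseudoinverse $\sigma_m(\T)(x_0,\xi)^{+}$ is not uniformly bounded. Concretely, take $n=2$, $E=F=\R^2$, $\T=\mathrm{diag}(\partial_1,\partial_2)$: the cone is $\{\lambda_1=0\}\cup\{\lambda_2=0\}$ and $Q(\lambda)=\lambda_1\lambda_2$ satisfies (C3), but for $\xi=(1,\delta)$ the projector $P_\perp(\xi)$ is the identity while $|\sigma(\xi)\lambda|\sim\delta$ on $\lambda=(0,1)$, so $|P_\perp\hat w|$ cannot be dominated by $|\sigma(\xi)\hat w|/|\xi|^m$ uniformly. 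Hence the $(\ker)^\perp$-piece does \emph{not} inherit strong $L^2$ convergence from (C2), and your ``main obstacle'' is not a technical nuisance but an actual obstruction.

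The paper (following Tartar) circumvents this by replacing the decomposition with a pointwise quadratic inequality obtained by compactness: if $Q\ge 0$ on $\Lambda_\T$, then for every $\delta>0$ and every compact $\K\Subset T^*M\setminus\{0\}$ there is $C_{\delta,\K}$ with
\[
\mathrm{Re}\,Q^{\C}(s)\ \ge\ -\delta\,|s|^2\ -\ C_{\delta,\K}\,\bigl|\sigma_m(\T)(\eta)\,s\bigr|^2\qquad(\eta\in\K,\ s\in E_x\otimes\C),
\]
proved by a straightforward contradiction argument on the unit sphere of $E_x\otimes\C$ (no rank assumption needed). Integrating this over $\{|\xi|\ge K\}$ with $\eta=\xi/|\xi|$ and $s=\widehat{u_\varepsilon-u}(\xi)$, the second term is killed by (C2) and the first by arbitrariness of $\delta$; combined with your low-frequency estimate (which is correct) and Plancherel, this gives the conclusion. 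Your complexification remark and the handling of variable coefficients by commutators are fine; only the kernel/complement splitting needs to be replaced by this inequality.
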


\medskip
Before presenting the proof, we make several remarks on  Theorem \ref{thm: generalized quadratic theorem on manifolds}:

\begin{enumerate}
\item[(i)]
Theorem \ref{thm: generalized quadratic theorem on manifolds} is formulated globally and intrinsically
on the semi-Riemannian manifold $M$, since symbol $\sigma_m$,
cone $\Lambda_\T$, and  the Sobolev spaces $H^\bullet$ of sections are all defined
without referring to local coordinates.
In addition, $\sigma_m$ is defined only by using the differentiable structure of $M$,
without resort to the Riemannian or semi-Riemannian structure.
Therefore, cone $\Lambda_\T$ in (C3) depends only on the {\em algebraic} properties of $\T$.

\smallskip
\item[(ii)]
In Theorem \ref{thm: generalized quadratic theorem on manifolds},
we denote the target space of symbol  $\sigma_m(\T)$ by ${\rm Hom}(E;F^\C)$,
which is understood as the vector bundle of $\R$-bundle homomorphisms from $E$ to the {\em complexification} of $F$,
{\it i.e.}, $F^\C := F \otimes \C$.
It is also common to write it as
\begin{equation*}
\sigma_m(\T) \in \Gamma(TM \otimes {\rm Hom}(E; F^\C)).
\end{equation*}

\item[(iii)]
The following lemma concerns the {\it naturality} of the principal symbol under the action
of diffeomorphism group. It is crucial for the proof of Theorem \ref{thm: generalized quadratic theorem on manifolds}.
\end{enumerate}

\begin{lemma}\label{lemma: naturality of symbols under diffeos}
Let $\mathcal{O}$ and $\tilde{\oo}$ be open subsets of $\R^n$,
and let $F: \oo \rightarrow \tilde{\oo}$ be a diffeomorphism.
Then $F_\ast P \in \diff^m(\tilde{\mathcal{O}})$ for $P \in \diff^m(\oo)$.
Moreover, the principal symbols of $P$ and $F_\ast P$, {\it i.e.},
$\sigma_m(F_\ast P)$ and $\sigma_m(P)$, are related as
\begin{equation*}
\sigma_m(F_\ast P)(F(x), \xi)
 = \sigma_m(P) (x, [\dd_x F]^\top(\xi))
 \quad \text{for each $x\in \oo$ and $\xi \in T^*\R^n$},
\end{equation*}
where $F_\ast P$ denotes the {\em pushforward} of $P$ under $F${\rm :}
\begin{equation*}
(F_\ast P)(\varphi) = P(\varphi \circ F) \circ F^{-1} \qquad \text{ for all } \varphi \in C^\infty_c(\tilde{\oo}).
\end{equation*}
\end{lemma}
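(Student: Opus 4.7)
The plan is to proceed directly from the intrinsic definition of the principal symbol given in Eq.\,\eqref{def of ppl symbol}, combined with a careful bookkeeping of the pushforward $F_\ast P$ and the chain rule on $1$-forms. The two assertions --- that $F_\ast P \in \diff^m(\tilde{\oo})$ and that the principal symbols transform as claimed --- admit a unified argument using a well-chosen test function.

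First, I would verify that $F_\ast P$ is a differential operator of order $m$ on $\tilde{\oo}$. Writing $P = \sum_{|\alpha|\le m} a_\alpha(x) \p_x^\alpha$ in local coordinates on $\oo$ and applying the chain rule to $\p_x^\alpha (\varphi \circ F)$, one expresses $(F_\ast P)(\varphi)$ as a linear combination of derivatives of $\varphi$ of orders $\le m$ with coefficients built from $a_\alpha \circ F^{-1}$ and derivatives of $F^{-1}$; this is standard and I would state it without detailed calculation. The top-order term comes from differentiating through $\varphi$ only, producing the factor $[\dd_x F]^\top$ acting on each covector ingredient.

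For the identification of symbols, fix $x \in \oo$ and $\xi \in T^*_{F(x)} \tilde{\oo}$. Choose $\tilde{f} \in C^\infty(\tilde{\oo})$ with $\dd_{F(x)}\tilde{f} = \xi$, and set $f := \tilde{f} \circ F \in C^\infty(\oo)$, so that by the chain rule
\begin{equation*}
\dd_x f = [\dd_x F]^\top (\xi), \qquad \tilde{f}(F(x)) = f(x).
\end{equation*}
Applying the definition \eqref{def of ppl symbol} of $\sigma_m$ to $F_\ast P$ with this choice of $\tilde f$ and unpacking the pushforward via $(F_\ast P)(e^{2\pi i t \tilde f}) = P\bigl(e^{2\pi i t (\tilde f \circ F)}\bigr) \circ F^{-1} = P(e^{2\pi i t f}) \circ F^{-1}$ yields, after evaluation at $F(x)$ and using $\tilde f(F(x)) = f(x)$,
\begin{equation*}
\sigma_m(F_\ast P)(F(x), \xi) = \lim_{t \to \infty} \frac{e^{-2\pi i t f(x)} \bigl[P(e^{2\pi i t f})\bigr](x)}{t^m} = \sigma_m(P)(x, \dd_x f) = \sigma_m(P)\bigl(x, [\dd_x F]^\top(\xi)\bigr),
\end{equation*}
which is the claimed formula.

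I expect no serious obstacle: the argument is essentially a pair of chain-rule applications, and the definition of $\sigma_m$ is engineered precisely so that conjugation by the diffeomorphism-invariant quantity $e^{2\pi i t f}$ commutes with coordinate changes. The only mild subtlety is to ensure the point of evaluation is tracked consistently through the composition $e^{-2\pi i t \tilde f}\circ F_\ast P \circ e^{2\pi i t \tilde f}$ at $F(x)$, which is why it is important to have $\tilde f$ and $f = \tilde f \circ F$ agree on values at $F(x)$ and $x$ respectively (they do, trivially). The first part of the lemma --- that $F_\ast P$ is in $\diff^m(\tilde \oo)$ --- could also be deduced a posteriori from the formula for $\sigma_m(F_\ast P)$, but I prefer to dispatch it directly via the local coordinate expansion.
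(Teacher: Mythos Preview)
Your argument is correct. The paper, however, does not prove this lemma at all: immediately after the statement it simply remarks that ``this is a special case of Theorem~20 in \cite{albin}'' and notes that the result holds more generally for pseudo-differential operators. Your direct computation from the intrinsic definition \eqref{def of ppl symbol} via the substitution $f = \tilde f \circ F$ and the chain rule $\dd_x f = [\dd_x F]^\top(\xi)$ is the standard self-contained proof, and has the advantage of keeping the exposition independent of the cited lecture notes; the paper's approach simply defers to the more general pseudo-differential theory.
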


This is a special case of Theorem 20 in \cite{albin}.
In full generality, the first assertion holds for general pseudo-differential operators,
and the second assertion holds for pseudo-differential operators with classical
total symbols.

The strategy for the proof of Theorem \ref{thm: generalized quadratic theorem on manifolds} is as follows:
First of all, using a partition-of-unity, together with the commutator estimate of $\T \in \diff^m(M; E,F)$
and a multiplication operator,
we reduce the theorem to a local problem on one single chart of the manifold.
Next, thanks to Lemma \ref{lemma: naturality of symbols under diffeos},
we can flatten the local chart to $\R^n${\rm ;} this cannot be done directly, owing to the non-trivial semi-Riemannian metrics on the
manifold and the bundles.
Nevertheless, in view of the quadratic structure of $Q$, the {\em signature} of the semi-Riemannian metrics
does not affect the proof.
Therefore, locally we can regard the metrics as ``close'' to the Euclidean metrics,
and then modify the arguments  by Tartar \cite{tartar} to complete the proof.

\bigskip
\noindent
{\bf Proof of Theorem {\rm \ref{thm: generalized quadratic theorem on manifolds}}}.
The proof is divided into eight steps.
	
\medskip
\noindent
{\bf 1.} We first justify the following two reductions:

\begin{enumerate}
\item[(i)] It suffices to prove the theorem for $u=0$. Indeed, we note that
\begin{align*}
Q(u_\varepsilon-u) = Q(u_\varepsilon) + Q(u) - 2\sum_{i,j=1}^J Q_{ij} u_\varepsilon^i u^j
\end{align*}
for $Q$ is a real quadratic polynomial. Condition (C1) yields
$$
\sum_{i,j=1}^J Q_{ij} u_\varepsilon^i u^j \weak Q(u)
\qquad \mbox{weakly in $L^2_\loc$}.
$$
Thus, $Q(u_\varepsilon-u)$ and $Q(u_\varepsilon)-Q(u)$ have the same distributional limit as $\varepsilon\map 0$.

\smallskip
\item[(ii)] We can {\em localize} the statement to each chart of the differentiable manifold $M$.
To fix the notations, let $\{U^k\}_{k \in \mathcal{I}}$ be an atlas of the differentiable manifold $M$. We {\it claim} that it suffices
to prove Theorem \ref{thm: generalized quadratic theorem on manifolds} for sequence $\{u_\varepsilon\}$ supported on one single $U^k$.
\end{enumerate}	

For this purpose, take any $\psi \in C^\infty_c(M)$ and consider the following identity:
	\begin{equation*}
	\T(\psi u_\varepsilon) = \psi \T u_\varepsilon + [\T, \psi] u_\varepsilon,
	\end{equation*}
where $[\T,\psi]$ denotes the commutator of $\T$ and the operator of multiplication by $\psi$.

Clearly, $[\T,\psi]$ is a differential operator of order not exceeding $m-1$.
Since $\{u_\varepsilon\}$ is pre-compact (hence uniformly bounded) in $L_\loc^2$,
$\{[\T,\psi]u_\varepsilon\}$ is uniformly bounded in $H^{-m+1}_{\rm loc}$,
which is compactly embedded in $H^{-m}_{\rm loc}$ by the Rellich lemma.
Moreover, by condition (C2),
$\{\psi \T u_\varepsilon\}$ is also pre-compact in $H^{-m}_{\rm loc}$.
Thus, the same holds for $\{\T(\psi u_\varepsilon)\}$.
In addition, the transition function $\varphi^{k,l}$ between any two overlapping charts $U^k$ and $U^l$
is a diffeomorphism,
so that both $\T|_{U^k}$ and $\T|_{U^l}$ have the principal symbols of order $m$,
which are $m$-homogeneous polynomials in the fiber of the cotangent bundle $T^*M$.
Indeed, they differ only by a multiplicative factor controlled by the Lipschitz norm of $\varphi^{k,l}$,
which is bounded uniformly on $M$ for all $k,l\in \mathcal{I}$.
Up to now, we have justified that the assumptions of
the theorem are invariant under operation $u_\varepsilon \mapsto \psi u_\varepsilon$,
where $\psi \in C^\infty_c(M)$ is an arbitrary test function.

It remains to establish the {\em local-to-global} result:
If the assertion holds for $\{u_\varepsilon\}$ supported in each chart,
then it also holds for arbitrary $\{u_\varepsilon\}$.
To this end, let $\{\phi_k\}_{k\in \mathcal{I}}$ be a partition-of-unity subordinate to
atlas $\{U^k\}_{k \in \mathcal{I}}$, {\it i.e.}, $0 \leq \phi^k\leq 1$, $\phi^k\in C^\infty_c(U^k)$ for
each $k\in \mathcal{I}$, and $\sum_{k\in \mathcal{I}} \phi^k =1$ on $M$.
Then we can find $\psi^k \in C^\infty_c(U^k)$ with $0 \leq \psi^k\leq 1$
such that $\phi^k = (\psi^k)^2$ for each $k \in \mathcal{I}$.
To proceed, suppose that Theorem \ref{thm: generalized quadratic theorem on manifolds} is proved
for sequence $\{\psi^k u_\varepsilon\} \subset L^2(U^k; E)$ for each $k\in\mathcal{I}$,
with $\psi^k u_\varepsilon \weak \psi^k u$ in $L^2(U^k; E)$ along some
subsequence $\{\psi^k u_{\varepsilon_i}\}_{i \in \mathcal{I}_1\subset \mathcal{I}}$.
Then, for a neighboring chart $U^l$, {\it i.e.}, $U^k\cap U^l \neq \emptyset$,
we can select a further subsequence
$\{\psi^l u_{\varepsilon_j} \}_{j \in {\mathcal{I}}_2}\subset \{\psi^k u_{\varepsilon_i}\}_{i \in \mathcal{I}_1}$
such that $\mathcal{I}_2 \subset \mathcal{I}_1$, and $\{\psi^l u_{\varepsilon_j} \}$ converges
weakly in $L^2(U^l; E)$ to some $\psi^l\tilde{u}$.
However, due to the uniqueness of subsequential weak limits, we have
\begin{equation*}
u = \tilde{u} \qquad \text{ on $U^k \cap U^l$}.
\end{equation*}
Hence, we can write $\psi^l\tilde{u}$ as $\psi^l u$ without ambiguity,
according to the interpretation: the limit function $u$, previously defined only on $U^k$,
is now extended to domain $U^k \cup U^l$.

Now, since $M$ is second-countable (which is a part of the definition of differentiable manifolds),
we can take the index set $\mathcal{I}$ for the atlas to be at most countable.
Thus, performing a diagonalization process to the arguments in the preceding paragraph,
we obtain a subsequence (still denoted) $\{u_\varepsilon\}$ and a function $u \in L^2_\loc(M; E)$ defined
on manifold $M$ such that
\begin{equation*}
\psi^k u_\varepsilon \weak \psi^k u \qquad \text{ for each $k \in \mathcal{I}$}.
\end{equation*}
Therefore, for any test function $\psi \in C^\infty_c(M)$, we can pass to the limit as follows:
\begin{align}\label{localization}
\lim_{\varepsilon\rightarrow 0} \int_M (Q\circ u_\varepsilon)(x)\psi(x) \dvg(x)
&=  \sum_{k\in \mathcal{I}}
   \lim_{\varepsilon\rightarrow 0} \int_M  {(\psi^k)^2(x)} \,(Q\circ u_\varepsilon)(x)\psi(x) \dvg(x)\nonumber \\
&= \sum_{k\in \mathcal{I}}\lim_{\varepsilon\rightarrow 0} \int_M Q(\psi^k(x) u_\varepsilon(x))\psi(x) \dvg(x) \nonumber\\
&= \sum_{k\in\mathcal{I}} \int_M Q (\psi^k(x) u(x))\psi(x)\dvg(x)\nonumber\\
&= \sum_{k\in\mathcal{I}}\int_M (\psi^k)^2(x)\,(Q\circ u)(x)\psi(x)\dvg(x)\nonumber\\
&= \int_M (Q\circ u)(x) \psi(x)\dvg(x).
\end{align}
In the first and the last lines of \eqref{localization},
we have used that $\sum_{k\in \mathcal{I}}(\psi^k)^2=1$ on $M$,
while in the second and the fourth lines, we have used the quadratic structure of $Q$.
Moreover, the order of summation over $\alpha \in \mathcal{I}$ can be interchanged with the limit
and the integration, because the partition-of-unity is locally finite.
Then the localization argument is completed by using Eq. \eqref{localization}.

\medskip
\noindent
{\bf 2.} From now on, $\{u_\varepsilon\}$ is assumed to be supported on a single chart $U^k \subset M$.
In this step, we {\em flatten the chart} by transforming $U^k$ to $\R^n$ via the coordinate map.
First, without loss of generality, we assume that the vector bundles $E$ and $F$ are trivialized on $U^k$;
otherwise, a refinement of atlas $\{U^k\}_{k \in \mathcal{I}}$ can be made if necessary.
Now, by the basic manifold theory, there exists a diffeomorphism $F^k: U^k \stackrel{\sim}\longrightarrow \R^n$
so that
\begin{equation*}
F^k_{\ast} \T \in \diff^m(\R^n;\,\R^n \times \R^J, \R^n \times \R^I).
\end{equation*}
Here and hereafter, we assume that $E$ and $F$ have typical fibers
$\R^J$ and $\R^I$, respectively.

Moreover, Lemma \ref{lemma: naturality of symbols under diffeos} implies
\begin{equation}\label{flatten local chart}
\sigma_m (F^k_{\ast} \T)(F^k(x), \zeta)
= \sigma_m(\T)(x, [\dd_x F^k]^\top(\zeta)) \quad \mbox{for all $x \in U^k$ and $\zeta \in \R^J$}.
\end{equation}
Notice that $\zeta$ and $[\dd_x F^k]^\top(\zeta)$ are simultaneously
non-vanishing in Eq. \eqref{flatten local chart}, since $F^k$ is a diffeomorphism.
We conclude
\begin{equation*}
\Lambda_\T = \Lambda_{F^k_\ast \T},
\end{equation*}
{\it i.e.}, the cones of $\T$ and $F^k_\ast \T$ coincide.

Therefore, it suffices to prove the theorem with $\{\dd F^k(\psi^k u_\varepsilon)\}$ and $F^k_\ast \T$
in place of $\{u_\varepsilon\}$ and $\T$, respectively, where $\{\psi^k: k \in \mathcal{I}\}$
is a partition-of-unity subordinate
to atlas $\{U^k\,:\, k \in \mathcal{I}\}$ as in Step $1$.
In addition, by the paracompactness of topological manifolds,
we may assume $\psi^k$ to be supported in a compact subset of $U^k$ for each $k\in \mathcal{I}$.
Thus, in the sequel, we take $\dd F^k(\psi^k u_\varepsilon)$ to be compactly supported in $\R^n$
and identify it with the map on the whole of $\R^n$,
obtained via the extension-by-zero.
To simplify the notations, we still label $\{\dd F^k(\psi^k u_\varepsilon)\}$ as $\{u_\varepsilon\}$.
Thus, we reduce to the case: $M=\R^n$.

\medskip
\noindent
{\bf 3.} Thanks to the localization and flattening arguments in Steps 1--2,
from now on, we assume
$\{u_\varepsilon\} \subset L^2_c(\R^n, \R^n \times \R^J)$ and $\T \in \diff^m(\R^n, \R^n \times \R^J; \R^n \times \R^I)$.
To simplify the notations, we  still write $E= \R^n \times \R^J$ and $F=\R^n \times \R^I$, and denote the metric
on $\R^n$ by $g$ with an abuse of notations, {\it i.e.}, assuming that $M=\R^n$, and the bundles $E$ and $F$ are globally trivialized.

To begin with, recall that the $L^p$ norm of $u: \R^n \rightarrow E$ is defined as
\begin{align*}
\|u\|_{L^p(\R^n; E)} :=&\, \Big(\int_{\R^n} |u|_{g^E}^2 \dvg\Big)^{\frac{1}{p}}\\
=&\, \Big(\int_{\R^n} \Big\{\sum_{j=1}^J\sum_{k=1}^J \e^k g^E_{jk}(x) u^j(x) {u^k(x)}\Big\}^{\frac{p}{2}}\sqrt{|\det\,g(x)|}\,\dd x\Big)^{\frac{1}{p}},
\end{align*}
where $g^E$ is the bundle metric on $E$, indices $1\leq j,k \leq J$ are for the fiber of $E$,
and $\e^k \in \{\pm 1\}$ is the {\em signature} of the $k$-th component of $g^E$ such
that $(h_{jk}):=(\epsilon^k g^E_{jk})$ becomes positive definite. Here and in the sequel,
we choose a coordinate system in which $g^E$ is diagonalized:
\begin{align*}
g^E =\, {\rm diag}(\lambda^1, \ldots, \lambda^\tau; \lambda^{\tau+1}, \ldots, \lambda^{J}),
\end{align*}
where $\lambda^j < 0$ for $1 \leq j \leq \tau$, and $\lambda^j > 0$ for $\tau+1 \leq j \leq J$.
Correspondingly, $\epsilon^1 = \cdots = \epsilon^\tau = -1$ and $\epsilon^{\tau+1} = \cdots = \epsilon^{J} = 1$,
where $\tau$ is the index of $g$.

\smallskip
Now, define a new sequence of sections $\{v_\varepsilon\} \subset L^2(\R^n; E)$ by components:
\begin{equation}\label{def of sequence v-tilde}
v_\varepsilon^j:= \sqrt{\e^j\lambda^j} {|\det\, g|}^{\frac{1}{4}} u_\varepsilon^j \qquad \text{ for each }
j = 1, 2, \ldots, J.
\end{equation}
That is, we write $v_\varepsilon =(v_\varepsilon^1, \ldots, v_\varepsilon^J)^\top$.
By this definition, $v_\varepsilon$ depends on $g^E$, $g$, and $u_\varepsilon$, and the following identity holds:
\begin{equation}\label{u L2norm = v L2 norm}
\|v_\varepsilon\|^2_{L^2(\R^n,\, g_0; E)} \equiv \sum_{j=1}^J \int_{\R^n} \{v_\varepsilon^j(x)\}^2 \,\dd x
= \|u_\varepsilon\|_{L^2(\R^n,\: g; E)}^2 \quad \text{for each $\varepsilon$},
\end{equation}
where $g_0$ denotes the {\em Euclidean} metric on $\R^n$.
Thus, by condition (C1),
$\{v_\varepsilon\}$ is uniformly bounded in $L^2$ with respect to $g_0$.
Moreover, ${\rm supp}(v_\varepsilon) \subset {\rm supp}(u_\varepsilon)$ for each $\varepsilon$ so that
all the terms of $\{v_\varepsilon\}$
are supported on a common compact set. By the Riemann-Lebesgue lemma,
there are finite numbers $K, K'>0$ such that $\|\hat{v}_\e\|_{L^\infty(B_{K})} \leq K'$,
where $B_K$ is the Euclidean ball $\{\xi\in\R^n\,:\,|\xi|< K\}$. Thanks to the Parseval identity, the Cauchy--Schwarz inequality,
and (C1), we now have
\begin{align}\label{low-frequency region for sequence v}
\lim_{\varepsilon \rightarrow 0}\int_{|\xi| \leq K} |\hat{v}_\varepsilon(\xi)|^2 \,\dd\xi
&\leq K' \lim_{\varepsilon \rightarrow 0}\int_{\R^n} \hat{v}_\varepsilon \overline{{\rm sgn}(\hat{v}_\varepsilon)}\chi_{B_{K}}\,\dd \xi \nonumber\\
&\leq K' \sqrt{K^n} \lim_{\varepsilon \rightarrow 0} \|\hat{v}_\varepsilon\|_{L^2(\R^n,g_0;E)} = 0,
\end{align}
where ${\rm sgn}(z):=\frac{z}{|z|}$ for $z\neq 0$,
the choice of $K'$ is immaterial, and $K$ will be further specified later.

As a remark, the norm on $\xi$ is also taken with respect to the Euclidean metric,
since it is the metric induced by $g_0$ on the cotangent bundle $T^*M$.

\medskip
\noindent
{\bf 4.}
Next we control the high-frequency region of $\{v_\varepsilon\}$. For $j=1,2,\ldots, J$, define
\begin{equation*}
\chi^j=\chi^j(g):= |\det\, g|^{\frac{1}{4}}  \sqrt{\e^j\lambda^j},
\end{equation*}
so that $v_\varepsilon^j=\chi^j u_\varepsilon^j$ for each $j$. Notice that $\chi^j>0$ strictly,
by the non-degeneracy of metrics $g$ and $g^E$.
Writing  $u_\varepsilon=\sum_{j=1}^J u_\varepsilon^j \p_j$ and similarly for $v_\varepsilon$ in local coordinates,
by the linearity of the differential operator $\T$, we have
\begin{align*}
\T v_\varepsilon &= \T\Big\{\sum_{j=1}^J \chi^j u_\varepsilon^j\p_j\Big\} \\
&= \sum_{j=1}^J\chi^j \, \T (u_\varepsilon^j\p_j) + \sum_{j=1}^J \,  [\T, \chi^j]u_\varepsilon^j\p_j =:\one_\varepsilon + \two_\varepsilon.
\end{align*}
In $\two_\varepsilon$, $[\T,\chi^j]$
is the commutator between $\T$ and the multiplication operator by $\chi^j$.

We now argue that {\it $\{\T v_\varepsilon\}$ is pre-compact in $H^{-m}(\R^n, g_0; F)$}.
First of all, this sequence is compactly supported, by the construction of $\{v_\varepsilon\}$ and the locality of the differential operator $\T$.
Thus, we neglect subscript ``{\rm loc}'' for the corresponding Sobolev spaces.
By explicitly writing out $g_0$ in the subscript,
we emphasize that $M=\R^n$ is equipped with the Euclidean metric.
To this end, we now prove that both $\{\one_\varepsilon\}$ and $\{\two_\varepsilon\}$
are pre-compact in $H^{-m}(\R^n, g_0; F)$.

For $\one_\varepsilon$, we first compute:
\begin{align}\label{term I mu 1}
\|\one_\varepsilon\|_{H^{-m}(\R^n, g_0; F)}
&\leq \sup_{1\leq j \leq J} \|\chi^j -1\|_{L^\infty(\R^n)}\, \|Tu_\varepsilon\|_{H^{-m}(\R^n, g_0; F)}\nonumber\\
&\leq \big(1+ \|g^E\|^{\frac{1}{2}}_{L^\infty(E)}\|\det \,g\|^{\frac{1}{4}}_{L^\infty(M)}\big)\|Tu_\varepsilon\|_{H^{-m}(\R^n, g_0; F)}.
\end{align}

Next, we show that the final term $\|Tu_\varepsilon\|_{H^{-m}(\R^n, g_0; F)}$ can be related to $\|Tu_\varepsilon\|_{H^{-m}(\R^n, g; F)}$,
whose pre-compactness is assumed by condition (C2).
For this purpose, it requires to invoke the Fourier characterization of the Sobolev norms $\|\cdot\|_{H^{-m}(\R^n, g_0; F)}$
and $\|\cdot\|_{H^{-m}(\R^n, g; F)}$.
Since we have localized sequence $\{u_\varepsilon\}$ to a chart $U^k$ of $M$, on which $E$ and $F$ are trivialized in Steps 1--2,
$g|_{U^k}$ has no self-intersecting geodesics,
provided that $U^k$ is contained in a geodesic normal neighborhood.
This can be assumed by shrinking $U^k$ if necessary.
Then the pushforward metric $F^k_\ast g$ --- which is still labelled as $g$ from Step $2$ onward --- satisfies
the same property on $\R^n = M$, so that $\|\cdot\|_{H^{-m}(\R^n, g; F)}$ can be defined globally
via the Fourier transform unambiguously.

In this way, we now obtain
\begin{align}\label{term I mu 2}
\|\T u_\varepsilon\|_{H^{-m}(\R^n, g_0; F)} &= \int_{\R^n}\frac{|\widehat{\T u_\varepsilon}(\xi)|_{g^F}^2}{(1+|\xi|^2)^{m/2}} \,\dd\xi \nonumber \\
&= \int_{\R^n}\frac{|\widehat{\T u_\varepsilon}(\xi)|_{g^F}^2}{(1+|\xi|_g^2)^{m/2}} \sqrt{|\det \, g|}
\bigg\{\frac{(1+|\xi|_g^2)^{m/2}}{(1+|\xi|^2)^{m/2}} \frac{1}{\sqrt{|\det \, g|}} \bigg\}\,\dd\xi \nonumber\\
&\leq C\int_{\R^n}\frac{|\widehat{\T u_\varepsilon}(\xi)|_{g^F}^2}{(1+|\xi|_g^2)^{m/2}} \sqrt{|\det \, g|} \,\dd\xi \nonumber\\
&=: C \|\T u_\varepsilon\|_{H^{-m}(\R^n, g; F)},
\end{align}
where $C$ depends only on $m$, $\|g\|_{L^\infty}(M)$, and $\inf_M |\det \, g|$.
Together with Eq. \eqref{term I mu 1}, we have
\begin{equation*}
\|\one_\varepsilon\|_{H^{-m}(\R^n, g_0; F)}  \leq \tilde{C} \|\T u_\varepsilon\|_{H^{-m}(\R^n, g; F)},
\end{equation*}
where $\tilde{C}$ depends only on $g$, $g^E$, and $m$,
but independent of $\varepsilon$.
In view of (C2),  $\{\one_\varepsilon\}$ is pre-compact in $H^{-m}(\R^n, g_0; F)$.

We now turn to $\{\two_\varepsilon\}$: $\,$ Since $\T\in \diff^m(M; E,F)$ and $\chi^j$ is a multiplication operator,
$[\T, \chi^j] \in \diff^r(M; E, F)$ for $r \leq m-1$.
By assumption (C1), $\{u_\varepsilon\}$ is bounded in $L^2(M, g; E)$,
hence $\{\two_\varepsilon\}$ is pre-compact in $H^{-m}(\R^n, g; F)$ due to the Rellich lemma.
Again, by the estimates in Eq. \eqref{term I mu 2},
$\{\two_\varepsilon\}$ is also pre-compact in $H^{-m}(\R^n, g_0; F)$.

Therefore, $\{\T v_\varepsilon\}$ is pre-compact in $H^{-m}(\R^n, g_0; F)$ so that
\begin{equation}\label{convergence of TV mu in H -m}
\T v_\varepsilon \longrightarrow 0  \qquad  \text{ strongly in } H^{-m}(\R^n, g_0; F).
\end{equation}
This is because $u_\varepsilon \weak 0$ in $L^2$ (see Step 1 above).
Here $\R^n$ is endowed with the Euclidean metric $g_0$, and $F$ has the bundle metric $g^F$.

\medskip
\noindent
{\bf 5.}
Now we estimate the Euclidean $L^2$ norm of $\widehat{\T v_\varepsilon}$ on $\{|\xi|\geq 1\}$,
where $\widehat{\T v_\varepsilon}$ is the standard Fourier transform on Euclidean spaces:
\begin{equation*}
\widehat{\T v_\varepsilon}(\xi):= \int_{\R^n} \T v_\varepsilon(x) e^{-2\pi i \xi \cdot x} \,\dd x.
\end{equation*}
Indeed, since $\T \in \diff^m(M; E,F)$,
by the localization and flattening in Steps 1--2, we have
\begin{equation*}
\T = \sum_{|\alpha|\leq m} a_\alpha(x) \p^\alpha_x,
\end{equation*}
and the principal symbol is given by
\begin{equation*}
\sigma_m(\T)(x,\xi) = (-2\pi i)^m \sum_{|\alpha|=m}a_\alpha(x) \xi^\alpha;
\end{equation*}
see
\S 3 in \cite{albin}.
Combining with the lower order terms, we have
\begin{equation*}
\widehat{\T v_\varepsilon}(\xi) = (-2\pi i)^m \sum_{|\alpha|=m}a_\alpha(x) \xi^\alpha\hat{v}_\varepsilon(\xi)
  + \sum_{|\beta| \leq m-1} b_{\beta}(x, \xi) \xi^\beta\hat{v}_\varepsilon(\xi),
\end{equation*}
where $|a_\alpha(x)|+|b_\beta(x,\xi)| \leq C_0$ for all $x\in M$ and $\xi \in T_x^*M$, and for each $\alpha$ and $\beta$.
Then
\begin{align*}
&\|\T v_\varepsilon\|^2_{H^{-m}(\{|\xi|\geq K\})}\\
&:=\int_{|\xi|\geq K} \frac{\big|(-2\pi i)^m \sum_{|\alpha|=m}a_\alpha(x) \xi^\alpha\hat{v}_\varepsilon(\xi)
   + \sum_{|\beta| \leq m-1} b_{\beta}(x, \xi) \xi^\beta\hat{v}_\varepsilon(\xi)\big|^2}{ (1+|\xi|^2 )^m} \,\dd\xi \\
&\geq C_1^{-1} \sum_{|\alpha| = m} \int_{|\xi|\geq K}  \frac{|a_\alpha(x)|^2|\xi|^{2m}}{(1+|\xi|^2)^m}|\hat{v}_\varepsilon(\xi)|^2 \,\dd\xi \\
&\qquad    - C_2 \sum_{|\gamma| \leq 2m-1}\int_{|\xi|\geq K} \frac{|\xi|^\gamma}{(1+|\xi|^2)^m} |\hat{v}_\varepsilon(\xi)|^2 \,\dd\xi,
\end{align*}
where $C_1$ depends only on $m$, while $C_2= C_2(\sup_x|b_\beta(x)|, m)$.
This is obtained by expanding the quadratic in the second line above and separating the highest order term from the other terms.
Now, choosing $K \geq 1$ so large that the second term is majorized by the first term in the last line, we have
\begin{equation*}
\|\T v_\varepsilon\|^2_{H^{-m}(\{|\xi|\geq K\})}
\geq C_3^{-1}\sum_{|\alpha| = m}
 \int_{|\xi|\geq K}  \frac{|a_\alpha(x)|^2|\xi|^{2m}}{(1+|\xi|^2)^m}|\hat{v}_\varepsilon(\xi)|^2 \,\dd\xi,
\end{equation*}
which converges to $0$ by Eq. \eqref{convergence of TV mu in H -m}, where $C_3$ depends on $C_1, C_2$, and $K$.

\medskip
\noindent
{\bf 6.} In this step, we complexify $Q$ and $\Lambda_\T$.
First, we view $Q: \Gamma(E) \rightarrow \R$ as a complex quadratic polynomial
$Q^\C : \Gamma(E^\C) \rightarrow \C$, given by the following expression in local coordinates:
\begin{equation*}
Q^\C(z):=Q^\C_x (z) := \sum_{j,k} Q_{jk}(x) z^j \overline{z^k} \qquad \text{ for $x \in M$ and $z\in E^\C_x$},
\end{equation*}
where $E^\C_x \cong \C^J$ is the fiber of the complexified bundle $E^\C:= E\otimes \C$
at point $x$.
Thus, $Q^\C (s) = Q(s)$ for real $s \in \Gamma(E)$.
Moreover, we define the {\em complexified cone} by
\begin{equation*}
\Lambda_\T^\C := \Lambda_\T + i\Lambda_\T= \{s^\C = s+is \,:\, s\in \Lambda_\T\}.
\end{equation*}

We now compute $Q^\C(\zeta)$ for $\zeta = s+ir$, where $s,r \in \Gamma(E)$ are real: Indeed,
\begin{equation*}
Q^\C(s+ir) = Q(s) + Q(r) + i \big\{\mathbf{q}(r,s) + \mathbf{q}(s,r)\big\},
\end{equation*}
where $\mathbf{q}(r,s):=\sum_{j,k}Q_{jk}(x) r^j\overline{s^k}$
and $Q(s)=\mathbf{q}(s,s)$ as before.
In particular, we have
\begin{equation*}
Q^\C(s^\C)= 2Q(s) +2iQ(s) =2Q(s)^\C,
\end{equation*}
so that, for $s^\C=s+is \in \Lambda_\T^\C$, the following facts hold:
\begin{enumerate}
\item[(i)] $Q(s)$ $>, =$, or $<0$ if and only if $ {\rm Re} \{Q^\C(s^\C)\}$ $>, =$, or $<0$ (respectively);

\smallskip
\item[(ii)]
$s \in \Lambda_\T$ if and only if $s^\C \in \Lambda^\C_\T$;

\smallskip
\item[(iii)] For any $\psi \in C^\infty_c(M)$ and $(u_\varepsilon)^\C:=u_\varepsilon+i u_\varepsilon$, we have
\begin{equation*}
\lim_{\varepsilon \rightarrow 0} \int_M(Q \circ u_\varepsilon) \psi \dvg= 0 \quad \iff
\quad
 \lim_{\varepsilon \rightarrow 0} \int_M {\rm Re}\{Q^\C \circ (u_\varepsilon)^\C\} \psi \dvg= 0.
\end{equation*}
\end{enumerate}

\smallskip
\noindent
{\bf 7.}
We first observe the following pointwise inequality: For each $\delta>0$ and any compact set $\K \Subset T^*M \setminus \{0\}$,
there is a constant $C_{\delta,\K}\in (0,\infty)$ such that
\begin{align}\label{inequality for Re Q, complexified}
&{\rm Re}\{Q^\C(s^\C)\} \geq -\delta \big| | s^\C|_{g^{E,\C}}\big|^2
 - C_{\delta, \K} \big| | \sigma_m(T)(\eta)(s^\C)|_{g^{F,\C}}\big|^2
\end{align}
for each $\eta \in \K$ and $s\in \Gamma(E)$,
provided that ${\rm Re}(Q) \geq 0$ on $\Lambda_\T$.
Here $g^{E,\C}$ is the {\em complexified bundle metric on $E$},
obtained according to the same rule for $Q \mapsto Q^\C$,
by viewing $g^E$ as a quadratic form on each fiber ({\it i.e.}, a vector space) of $E$; and similarly for $g^{F,\C}$.

Indeed, since Eq. \eqref{inequality for Re Q, complexified} is $2$-homogeneous in $s^\C$,
the scaling: $s^\C \mapsto \lambda s^\C$ by any $\lambda \in \C$ leaves it invariant.
In particular, it is independent of the signatures of the semi-Riemannian bundle metrics $g^E$ and $g^F$.
Moreover, cone $\Lambda_\T$ in (C3) is completely determined by $\T$,
which is independent of metrics $g$, $g^E$, and $g^F$, and sequences $\{u_\varepsilon\}$ and $\{v_\varepsilon\}$.
Thus, Eq. \eqref{inequality for Re Q, complexified} follows from a simple contradictory argument
as in Tartar's proof of the classical quadratic theorem \cite{tartar}.

We now integrate Eq. \eqref{inequality for Re Q, complexified} over $\{|\xi| \geq K\}$, with $K\geq 1$ specified
at the end of Step 5 above,
$s^\C=(\hat{v}_\varepsilon)^\C$, and $\eta:=\frac{\xi^{2m}}{(1+|\xi|^2)^m}$.
Then
\begin{equation*}
2^{-m} = \frac{|\xi|^{2m}}{(2|\xi|^2)^m}\leq |\eta| \leq 1 \qquad \text{ for all $|\xi|\geq K$}.
\end{equation*}
We remark here that it is crucial for sequence $\{v_\varepsilon\}$ to be taken on $M=\R^n$ with respect
to the Euclidean metric ({\it cf.}  Step $3$ above).
In this case, the metric induced on the cotangent bundle $T^*M$ is also Euclidean,
so that $|\xi| \neq 0$ for all $\xi \in T^*M\setminus \{0\}$.

To proceed, $\eta \in \K:= \{2^{-m} \leq |\xi|\leq 1\}$ is indeed a compact subset of $T^*M \setminus \{0\}$ so that
\begin{align*}
&\int_{|\xi|\geq K} {\rm Re}\{Q^\C({\hat{v}_\varepsilon})^\C\} \,\dd\xi  \nonumber\\
&\geq  -\delta \|v_\varepsilon\|^2_{L^2(\R^N; E)} - C_{\delta, \K} \bigg\{\sum_{|\alpha| = m}
\int_{|\xi|\geq K}  \frac{|a_\alpha(x)|^2|\xi|^{2m}}{(1+|\xi|^2)^m}|\hat{v}_\varepsilon(\xi)|^2 \,\dd\xi\bigg\},
\end{align*}
where the last term on the right-hand side tends to zero as $\varepsilon\rightarrow 0$ ({\it cf.} Step $5$).
Therefore, we have
$$
\lim_{\varepsilon\to 0}\int_{|\xi|\geq K} {\rm Re}\{Q^\C({\hat{v}_\varepsilon})^\C\} \,\dd\xi
\geq  - C_0\delta
\qquad\,\, \mbox{for arbitrary $\delta>0$},
$$
where $C_0=\sup_{\varepsilon>0} \|v_\varepsilon\|^2_{L^2(\R^n; E)}=\sup_{\varepsilon>0} \|u_\varepsilon\|^2_{L^2(\mathbb{R}^n,\,g; E)}<\infty$.
This implies that the left-hand side is non-negative.
Applying the same argument for $-Q$ in place of $Q$,
thanks to condition (C3) and Step $6$ above,
we finally obtain
\begin{equation*}
\lim_{\varepsilon \to 0} \int_{|\xi|\geq K} {\rm Re}\,\{Q^\C \circ {\hat{v}_\varepsilon}^\C\}(\xi)\,\dd \xi = 0,
\end{equation*}
that is,
\begin{equation}\label{high-frequency region for sequence v}
\lim_{\varepsilon \to 0} \int_{|\xi|\geq K} {\rm Re}\,\{Q ({\hat{v}_\varepsilon})\}(\xi)\,\dd \xi = 0.
\end{equation}

\smallskip
\noindent
{\bf 8.}
Now we combine \eqref{low-frequency region for sequence v} with \eqref{high-frequency region for sequence v}
and employ the Plancherel formula to conclude
\begin{align}
&\lim_{\varepsilon \to 0} \Big|\int_{\R^n} {\rm Re}\{Q(\hat{v}_\varepsilon (\xi))\}\,\dd \xi\Big|\nonumber\\
&\le \lim_{\varepsilon \to 0}\int_{|\xi| < K} \big|{\rm Re}\{Q(\hat{v}_\varepsilon (\xi))\}\big|\,\dd \xi
   +\lim_{\varepsilon\to 0}\Big|\int_{|\xi| \geq K} {\rm Re}\{Q(\hat{v}_\varepsilon (\xi))\}\,\dd \xi\Big|\nonumber\\
&\leq C\lim_{\varepsilon \to 0}\int_{|\xi| < K} |\hat{v}_\varepsilon(\xi)|^2\,\dd \xi=0,\label{plancherel for v mu}
\end{align}
for some constant $C>0$ independent of $\varepsilon>0$.
Then we infer from the Plancherel
formula that
$$
\lim_{\varepsilon \to 0} \int_{\R^n} (Q\circ{v}_\varepsilon) (x)\,\dd x = 0.
$$
Also, recall from Equation \eqref{def of sequence v-tilde} that
$v_\varepsilon$ differs from $u_\varepsilon$ by a multiplicative factor depending only on the $L^\infty$ norms of metrics on $M$ and $E$ (independent of $\varepsilon$).
As $Q$ is quadratic, we thus deduce
\begin{align*}
\lim_{\varepsilon \to 0} \int_M (Q\circ u_\epsilon)(x)\,\dd V_g = 0.
\end{align*}

Moreover, we recall from Step $1$ that the assertion of Theorem \ref{thm: generalized quadratic theorem on manifolds}
is invariant under localizations, {\it i.e.},  multiplication by test functions $\psi \in C^\infty_c(M)$.
Therefore, we can now conclude that $\{Q\circ u_\varepsilon\}$ converges
to $Q\circ u$ in the sense of distributions. This completes the proof.

\medskip
We emphasize that the non-degeneracy condition of metric, $\det \, g \neq 0$,
is crucial to the proof. We need it in Eq. \eqref{term I mu 2} to compare
the $H^{-m}$ norms of $\T\hat{u}^{\varepsilon}$ taken with respect to $g$ and the Euclidean metric $g_0$.
Therefore, we can extend Theorem \ref{thm: generalized quadratic theorem on manifolds} to a more general
theorem, Theorem \ref{thm: generalized quadratic theorem on manifolds-b} below,
for non-smooth metrics $g$, $g^E$, and $g^F$, which is crucial to the development in \S 4.
Notice that, in the proof of Theorem \ref{thm: generalized quadratic theorem on manifolds},
only the $L^\infty_\loc$ topology
of the metrics are involved in the estimates. Thus, in view of the Morrey--Sobolev embedding,
the following result holds by an approximation argument:

\begin{theorem}\label{thm: generalized quadratic theorem on manifolds-b}
Let $M$ be a semi-Riemannian  manifold with a non-degenerate $L^\infty_{\loc}$
metric $g$ {\rm (}{\it i.e.}, $|\det \,g| \geq \eta_0 >0$ a.e.{\rm )}. Let $E$ and $F$ be two real vector bundles
over $M$ with $L^\infty_\loc$ bundle metrics $g^E$ and $g^F$, respectively.
Consider a sequence of $E$-sections $\{u_\varepsilon\} \subset L^2(M; E)$,
a differential operator $\T \in \diff^m(M; E, F)$ for
some $m \in \R_{+}$ with the principal symbol $\sigma_m(\T): T^*M \rightarrow {\rm Hom}(E; F^\C)$,
and a real quadratic polynomial $Q : \Gamma(E) \rightarrow \R$.
If the following conditions hold{\rm :}
\begin{enumerate}
\item[{\rm (C-1)}]
$u_\varepsilon \weak u$ weakly in $L^2_{\rm loc}(M;E)${\rm ,}

\smallskip
\item[{\rm (C-2)}]
$\{{\T u_\varepsilon}\}$ is pre-compact in $H^{-m}_{\rm loc}(M; F)${\rm ,}

\smallskip
\item[{\em (C-3)}]
$Q(s)=0$ for all $s \in \Lambda_\T$, where the {\em cone} of $\T$ is defined by
\begin{equation*}
\Lambda_\T:=\big\{s \in \Gamma(E)\,:\, \sigma_m(\T)(\xi)(s) = 0
 \text{ for some $\xi \in T^*M \setminus \{0\}$}\big\},
\end{equation*}
\end{enumerate}
then
\begin{equation*}
\lim_{\varepsilon\rightarrow 0} \int_M  (Q\circ u_\varepsilon)\psi\, \dvg
= \int_M  (Q\circ u) \psi\,\dvg \qquad \text{ for any } \psi \in C^\infty_c(M).
\end{equation*}
\end{theorem}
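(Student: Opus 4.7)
The plan is to prove Theorem \ref{thm: generalized quadratic theorem on manifolds-b} by a mollification-and-limit argument that reduces it to Theorem \ref{thm: generalized quadratic theorem on manifolds}, as indicated in the preceding remark. The crucial observation is that the proof of Theorem \ref{thm: generalized quadratic theorem on manifolds} invokes the metrics $g$, $g^E$, $g^F$ only through their $L^\infty_\loc$ bounds and the non-degeneracy lower bound on $|\det\,g|$; moreover, the cone $\Lambda_\T$ appearing in (C-3) is defined purely from the principal symbol $\sigma_m(\T)$ and is therefore independent of any metric.

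First I would regularize the metrics. Fix a standard mollifier $\rho_\delta$ on $\R^n$ and an atlas trivializing both $E$ and $F$, and set $g_\delta := \rho_\delta \ast g$, $g^E_\delta := \rho_\delta \ast g^E$, $g^F_\delta := \rho_\delta \ast g^F$ chart-wise, patched by a subordinate partition of unity. These are smooth and satisfy $\|g_\delta\|_{L^\infty_\loc} \leq \|g\|_{L^\infty_\loc}$, with pointwise a.e.\ convergence and convergence in every $L^q_\loc$ for $q<\infty$. Non-degeneracy on any precompact $K \Subset M$, say $|\det\,g_\delta| \geq \eta_0/2$, holds for all $\delta$ sufficiently small; if this fails pointwise on a thin exceptional set, an arbitrarily small smooth correction (working in a locally orthonormal frame so as to preserve the signature) restores it without affecting the $L^\infty_\loc$ bounds. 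Conditions (C-1) and (C-2) transfer from $g$ to each $g_\delta$ because the corresponding $L^2_\loc$- and $H^{-m}_\loc$-norms are equivalent, with constants uniform in $\delta$ controlled by $\|g\|_{L^\infty_\loc}$ and $\eta_0$; this is exactly the comparison exhibited in Eq.~\eqref{term I mu 2}. Applying Theorem \ref{thm: generalized quadratic theorem on manifolds} to the smooth data $(g_\delta, g^E_\delta, g^F_\delta)$ then delivers, for each $\delta>0$ and every $\psi \in C^\infty_c(M)$,
\begin{equation*}
\lim_{\varepsilon \to 0}\int_M (Q \circ u_\varepsilon)\,\psi\,{\rm d}V_{g_\delta} \;=\; \int_M (Q \circ u)\,\psi\,{\rm d}V_{g_\delta}.
\end{equation*}

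The final step is the limit $\delta \to 0$. The right-hand side converges to $\int_M (Q \circ u)\,\psi\,{\rm d}V_g$ by dominated convergence, using $\sqrt{|\det\,g_\delta|} \to \sqrt{|\det\,g|}$ a.e.\ together with the uniform $L^\infty_\loc$ bound and the fact that $Q\circ u \in L^1_\loc$. The hard part, which I expect to be the main obstacle, is the corresponding convergence of the $\varepsilon$-dependent integrals \emph{uniformly in $\varepsilon$}, for this is what legitimizes the interchange of the two limits. Under the bare hypothesis $u_\varepsilon \rightharpoonup u$ in $L^2_\loc$, the family $\{|u_\varepsilon|^2\}$ is only $L^1_\loc$-bounded and need not be equi-integrable, so a naive bound of the form $\|u_\varepsilon\|_{L^2}^2 \,\|\sqrt{|\det\,g|} - \sqrt{|\det\,g_\delta|}\|_{L^\infty}$ does not suffice, since the $L^\infty$ metric difference need not tend to zero. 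The Morrey--Sobolev embedding referenced in the preceding remark is what bridges this gap in the intended applications, where $g$ in fact belongs to $W^{1,p}_\loc$ with $p$ large enough to embed into $C^0_\loc$: the a.e.\ convergence of $\sqrt{|\det\,g_\delta|}$ then upgrades to locally uniform convergence, the uniform-in-$\varepsilon$ estimate becomes immediate, and a diagonal subsequence $\delta = \delta(\varepsilon) \to 0$ closes the argument via dominated convergence.
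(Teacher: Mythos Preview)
Your strategy—mollify the metrics, apply Theorem~\ref{thm: generalized quadratic theorem on manifolds} as a black box, then send $\delta\to 0$—is a reasonable reading of the word ``approximation,'' and you have correctly isolated the genuine obstruction in that route: the interchange of the limits $\varepsilon\to 0$ and $\delta\to 0$ requires controlling
\[
\int_M (Q\circ u_\varepsilon)\,\psi\,\big(\sqrt{|\det g_\delta|}-\sqrt{|\det g|}\,\big)\,\dd x
\]
uniformly in $\varepsilon$, and under the bare $L^\infty_\loc$ hypothesis on $g$ there is no reason for $\sqrt{|\det g_\delta|}\to\sqrt{|\det g|}$ in $L^\infty_\loc$. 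Your fallback to $g\in W^{1,p}_\loc$ with $p>n$ (so that Morrey--Sobolev furnishes continuity and hence locally uniform convergence of mollifications) therefore proves only a strictly weaker statement than Theorem~\ref{thm: generalized quadratic theorem on manifolds-b} as written.

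The paper's intended route is different and sidesteps this double-limit issue entirely. The sentence ``only the $L^\infty_\loc$ topology of the metrics are involved in the estimates'' is the operative one: rather than invoking Theorem~\ref{thm: generalized quadratic theorem on manifolds} as a black box on perturbed metrics, one re-reads its \emph{proof} and checks that each step goes through when $g,g^E,g^F$ are merely $L^\infty_\loc$ and non-degenerate. The metrics enter only through (a) the volume form $\sqrt{|\det g|}$ appearing under the integral signs, (b) the comparison of $H^{-m}$-norms in Eq.~\eqref{term I mu 2}, and (c) the multiplicative factors $\chi^j=|\det g|^{1/4}\sqrt{\epsilon^j\lambda^j}$ in Steps~3--4. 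All of these are controlled by $\|g\|_{L^\infty_\loc}$, $\|g^E\|_{L^\infty_\loc}$, and the lower bound $\eta_0$ on $|\det g|$; the cone $\Lambda_\T$ and condition~(C-3) are metric-independent, as you observe. Because the metric is held fixed throughout, no interchange of limits ever arises. The one place requiring care is the commutator $[\T,\chi^j]$ in Step~4, where derivatives of $\chi^j$ appear; an approximation of $\chi^j$ alone at that step (with the hypotheses and the conclusion still formulated for the original $g$) is what the brief phrase ``approximation argument'' most plausibly points to—not a global mollification of the metric in both hypotheses and conclusion, which is what creates your difficulty.
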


\medskip
To conclude this section, besides the geometric theorem,
Theorem~{\rm \ref{thm: generalized quadratic theorem on manifolds-b}},
we can also obtain a generalized compensated compactness theorem in the abstract harmonic analysis
settings.
Although this result is not needed for our weak continuity
theorem (Theorem~\ref{thm: weak continuity of Cartan's structural eq}) for the Cartan structural system below,
it is of independent interest from the perspectives of compensated compactness and harmonic analysis.
In addition, it may help to elucidate certain steps
in the lengthy proof of Theorem~\ref{thm: generalized quadratic theorem on manifolds}
that leads to Theorem~\ref{thm: generalized quadratic theorem on manifolds-b} above.

We first recall some basics of abstract harmonic analysis $(${\it cf}. Loomis \cite{loomis} and the notes by Tao \cite{Tao}$)$.
A {\em topological group} $G$ is a group with a topology, in which the group operation and the inverse are continuous.
If a group $G$ is Abelian whose topology is Hausdorff and locally compact,
we say  that $G$ is a {\em locally compact Abelian group}, abbreviated as {\em LCA group} in the sequel.
For any LCA group $G$, there exists an invariant Radon measure $\mu_G$, unique up to multiplicative constants,
known as the {\em Haar measure}.
The $L^p$ norm, $1\leq p < \infty$, for a function $u: G \rightarrow \C$ can then be defined as
\begin{equation*}
\|u\|_{L^p(G)}:=\Big(\int_G |u(g)|^p \,\dd\mu_G(g)\Big)^{{1}/{p}}.
\end{equation*}

Given any LCA group $G$, its {\em group of characters}, $\hg:= {\rm Hom}(G; \R / \mathbb{Z})$, is
also an LCA group endowed with the local-uniform topology of any non-trivial Haar measure (which is the weakest topology making
each element of $\hg$ continuous).
It is also known as the {\em dual} of $G$, due to the {{\em Pontryagin duality theorem}}:
$G$ is canonically isomorphic to $\hat{\hat{G}}$.
Then, for $u \in L^1(G)$, we can define its {\em Fourier transform} $\hat{u}:\hg \rightarrow \C$ by
\begin{equation}\label{eq: Fourier transform for LCA gps}
\hat{u}(\xi):= \int_G u(g) e^{-2\pi i \xi(g)} \,\dd\mu_G (g),
\end{equation}
where $\xi(g)$ is given by the duality pairing of $\hg$ and $G$.
From now on, we write $0 \in \hg$ as the group identity;
this is in agreement with the definition, $\hg:= {\rm Hom}(G; \R / \mathbb{Z})$,
which is the group of {\em additive} (not multiplicative) characters.

Next, the {\em Plancherel formula} extends to the general LCA groups:
\begin{equation*}
\|u\|_{L^2(G)} = \|\hat{u}\|_{L^2(\hat{G})} \qquad \text{ for all $u \in L^2(G)$},
\end{equation*}
with the Haar measures $\mu_G$ and $\mu_{\hg}$ suitably normalized.
In other words, the Fourier transform defined in Eq. \eqref{eq: Fourier transform for LCA gps}
is an isometry between $L^2(G)$ and $L^2(\hg)$.
Notice that all the constructions up to now can  naturally be extended to vector-valued functions
$u: G \rightarrow \C^I$ for $I \geq 1$.

Finally, we say that
$\T: L^2(G) \rightarrow L^2(G)$ is a {\em multiplier operator} if
\begin{equation*}
\widehat{\T u}(\xi) = m(\xi) \hat{u}(\xi)\qquad \text{ for some } m: \hat{G} \rightarrow \C,
\end{equation*}
where $m$ is known as the {\em Fourier multiplier} of $\T$.
More generally, for $\T: L^2(G; \C^J) \to L^2(G;\C^I)$ for $I,J \geq 1$,
the multiplier is a mapping
\begin{equation*}
m: \hg \rightarrow \matij \cong (\C^J)^\ast \otimes \C^I.
\end{equation*}
That is, for each $\xi \in \hg$, $m(\xi)$ is a linear operator from $\C^J$ to $\C^I$
(equivalently, an $I\times J$ matrix).
In the sequel, for any matrix $M \in \matij$,
we use $|M|:=\sqrt{\sum_{i=1}^I\sum_{j=1}^J |M_{ij}|^2}$ to denote its Hilbert--Schmidt norm.

In this context, we say that $Q: \C^N \rightarrow \C$ is a  {\em quadratic polynomial}
if it is a Hermitian $2$-form on $\C^N$, {\it i.e.}, $Q=\{Q_{jk}\}$
as a complex $N\times N$ matrix satisfies
\begin{equation*}
Q_{jk} = \overline{Q_{kj}} \qquad \text{ for each }j,k=1,2,\ldots, N.
\end{equation*}
That is,
\begin{equation}\label{eq: def for quadratic polynomial on flat space}
	Q(\lambda) = \sum_{j,k=1}^{N} Q_{jk} \lambda^j \overline{\lambda^k}
\quad \text{for $\lambda=(\lambda^1, \ldots,\lambda^N) \in \C^N$ and constants $Q_{jk} \in \C$}.
\end{equation}

\begin{theorem}\label{thm:3.5}
Let $G$ be an LCA group with Haar measure $\mu_G$.
Consider a sequence $\{u_\varepsilon\}$ in $L^2_c(G; \C^J)$,
a Fourier multiplier operator $\T: L^2(G; \C^J) \rightarrow H^{-s}(G; \C^I)$
with multiplier $m: \hat{G} \rightarrow \matij$ for some $s \in \R_{+}$,
and a quadratic polynomial $Q: \C^J \rightarrow \C$.
Assume that
\begin{enumerate}
\item[{\em (i)}]
$u_\varepsilon \weak u$ weakly in $L^2(G; \C^J)$.

\smallskip
\item[{\em (ii)}]
The end of $\hg$ retracts nicely onto a compact set.
More precisely, for some
compact set $\Xi \Subset \hg$ containing $0$,
there exist another compact set $\K \Subset \hg \setminus \{0\}$ and a continuous surjective map $\Phi: \hg\setminus \Xi \rightarrow \K$
such that $\{(\Phi^*m) \hat{u_\varepsilon}\}$ is pre-compact in $L^2(\hg\setminus \Xi; \C^J)$.

\smallskip
\item[{\em (iii)}]
$Q(\lambda) = 0$ for all $\lambda \in \Lambda_\T$, where $\Lambda_\T$ {\rm (}the {\em cone} of $\T${\rm )} is defined by
\begin{equation}\label{def cone of T, LCA group}
\Lambda_\T:=\big\{\lambda \in \C^J\,:\, m(\xi)(\lambda)=0 \,\text{ for some } \xi \in \hat{G}\setminus \{0\}\big\}.
\end{equation}
\end{enumerate}
Then
\begin{equation*}
\lim_{\varepsilon \rightarrow 0}\int_{G}  (Q \circ u_\varepsilon)(g) \,\dd\mu_G(g) = \int_{G}  (Q \circ u)(g) \,\dd\mu_G(g).
\end{equation*}
\end{theorem}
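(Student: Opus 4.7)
The plan is to transcribe the Tartar-style argument underlying Theorem~\ref{thm: generalized quadratic theorem on manifolds} into the abstract harmonic-analytic framework, replacing the radial scaling $\xi\mapsto\xi/|\xi|$ on $\R^d$ by the retraction $\Phi$, and the Euclidean Fourier transform by the Pontryagin transform. I would first reduce to $u=0$ as in the corresponding step of Theorem~\ref{thm: generalized quadratic theorem on manifolds}: writing
\begin{equation*}
Q(u_\varepsilon)-Q(u) = Q(u_\varepsilon-u) + 2\,\mathrm{Re}\sum_{j,k=1}^{J}Q_{jk}(u_\varepsilon-u)^{j}\overline{u^{k}},
\end{equation*}
and noting that the cross-term tends to $0$ distributionally by (i). By the Parseval isometry on the LCA group, the claim is equivalent to $\int_{\hat{G}}Q\circ\hat{u_\varepsilon}\,\dd\mu_{\hat{G}}\to 0$, which I would split as $\int_{\Xi}+\int_{\hat{G}\setminus\Xi}$.

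For the low-frequency part: since $\{u_\varepsilon\}\subset L^2_c(G;\C^J)$ has supports in a fixed compact $K_0\Subset G$, the Cauchy--Schwarz inequality yields the uniform $L^\infty$ bound $\|\hat{u_\varepsilon}\|_{L^\infty(\hat{G})}\leq \mu_G(K_0)^{1/2}\sup_\varepsilon\|u_\varepsilon\|_{L^2}$. Testing $u_\varepsilon\weak 0$ against the $L^2$-function $\chi_{K_0}e^{-2\pi i\xi(\cdot)}$ shows $\hat{u_\varepsilon}(\xi)\to 0$ pointwise. Dominated convergence on the compact set $\Xi$ then gives $\int_{\Xi}|\hat{u_\varepsilon}|^2\,\dd\mu_{\hat{G}}\to 0$, and hence $\int_{\Xi}Q\circ\hat{u_\varepsilon}\to 0$.

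For the high-frequency part, I would invoke the abstract counterpart of the pointwise inequality \eqref{inequality for Re Q, complexified}: for each $\delta>0$ there exists $C_{\delta,\K}<\infty$ such that
\begin{equation*}
\mathrm{Re}\,Q(\lambda)\,\geq\,-\delta |\lambda|^2 \,-\, C_{\delta,\K}\,|m(\xi)(\lambda)|^2 \qquad \text{for all } \xi\in\K \text{ and }\lambda\in\C^J.
\end{equation*}
This is a two-homogeneity-plus-contradiction argument relying only on the compactness of $\K\Subset\hat{G}\setminus\{0\}$ and hypothesis (iii). Specializing $\lambda=\hat{u_\varepsilon}(\xi')$ and $\xi=\Phi(\xi')$ and integrating over $\xi'\in\hat{G}\setminus\Xi$,
\begin{equation*}
\int_{\hat{G}\setminus\Xi}\mathrm{Re}\,Q\circ\hat{u_\varepsilon}\,\dd\mu_{\hat{G}}\,\geq\, -\delta\,\|\hat{u_\varepsilon}\|_{L^2(\hat{G})}^2 \,-\, C_{\delta,\K}\,\|(\Phi^*m)\hat{u_\varepsilon}\|_{L^2(\hat{G}\setminus\Xi)}^2.
\end{equation*}
Hypothesis (i) and Plancherel give $\hat{u_\varepsilon}\weak 0$ in $L^2(\hat{G})$, so $(\Phi^*m)\hat{u_\varepsilon}\weak 0$; combined with the pre-compactness in (ii), this forces strong $L^2$ convergence to $0$. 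Since $\|\hat{u_\varepsilon}\|_{L^2}$ is uniformly bounded and $\delta>0$ is arbitrary, the $\liminf$ of the high-frequency integral is non-negative, and the symmetric argument applied to $-Q$ drives it to $0$.

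\emph{Main obstacle.} The principal conceptual difficulty is that a generic LCA group carries no natural dilation structure, so there is no literal analogue of the Euclidean trick of normalizing $\{|\xi|\geq K\}$ to the unit sphere. Hypothesis (ii) is engineered precisely to substitute for this missing scaling: the retraction $\Phi$ plays the role of the normalization map, $\K$ plays the role of the unit sphere, and the pre-compactness of $(\Phi^*m)\hat{u_\varepsilon}$ replaces the condition that $\T u_\varepsilon$ be pre-compact in $H^{-m}_\loc$. The delicate point in executing the plan is maintaining $\K$ at a positive distance from the identity $0\in\hat{G}$---the cone inequality degenerates there---and carefully normalizing the Haar measures on $G$ and $\hat{G}$ so that Parseval applies without stray constants.
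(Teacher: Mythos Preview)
Your proposal is correct and follows essentially the same route as the paper's proof in Appendix~B: reduction to $u=0$, splitting $\hat G$ into $\Xi$ and $\hat G\setminus\Xi$, the $2$-homogeneous contradiction argument for the pointwise cone inequality on $\K$, and the specialization $\lambda=\hat u_\varepsilon(\xi')$, $\xi=\Phi(\xi')$ combined with hypothesis~(ii) to kill the high-frequency part. Your low-frequency argument (uniform $L^\infty$ bound plus pointwise convergence plus dominated convergence on the compact $\Xi$) is a slight variant of the paper's version, which instead tests against $\overline{\mathrm{sgn}(\hat u_\varepsilon)}\chi_\Xi$; both are fine and rest on the same implicit uniform-compact-support assumption for $\{u_\varepsilon\}$.
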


In Theorem \ref{thm:3.5} above, the {\em pullback} of $m$ under $\Phi$, {\it i.e.},
 $\Phi^*m: \hg \setminus \Xi \rightarrow [0,\infty]$, is given by
$\Phi^*m (\xi):= m(\Phi(\xi))$.
In the definition of $\Lambda_\T$ in \eqref{def cone of T, LCA group}, we view
$
m: \hg \rightarrow (\C^J)^\ast \otimes \C^I.
$
That is, $m(\xi)$ is an operator from $\C^J$ to $\C^I$ so that $m(\xi)(\lambda) \in \C^I$.
According to this interpretation, another characterization of the cone is
\begin{equation*}
\Lambda_\T =  \bigcup_{\xi \in \hg \setminus \{0\}} \ker [m(\xi)].
\end{equation*}

The proof of Theorem \ref{thm:3.5} can be found in Appendix B.

\smallskip
\section{\, Global Weak Continuity of the Cartan Structural System}\label{sec: weak continuity of Cartan's structural eq}
\label{sec: weak continuity of cartan}

In this section, we establish the weak continuity of the Cartan structural system
\eqref{eqn_second structural eqn} on semi-Riemannian manifolds.
The arguments are global and intrinsic,
based on the geometric compensated compactness theorem, Theorem \ref{thm: generalized quadratic theorem on manifolds-b}.
This extends our earlier results on the weak continuity of the GCR system on
Riemannian manifolds \cite{chenli,csw2}.

\begin{theorem}\label{thm: weak continuity of Cartan's structural eq}
Let $(M,g)$ be a semi-Riemannian manifold of dimension $n$,
with $\ind(M)=\nu$, $g\in L^\infty_\loc$, and the Levi--Civita connection $\na$ of $g$ in $L^p_\loc$ for $p>2$.
Assume that a family of connection $1$-forms $\{\W_\varepsilon\}$ with the same index is uniformly bounded in $L^p_\loc$
and that each $\W_\varepsilon$ satisfies the Cartan  structural system \eqref{eqn_second structural eqn}
in the sense of distributions.
Then, after passing to a subsequence if necessary,
$\W_\varepsilon$ converges weakly in $L^p_\loc$ to a connection $1$-form $\W$ that
also satisfies system \eqref{eqn_second structural eqn}.
\end{theorem}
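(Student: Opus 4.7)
\emph{Plan.} I will apply the geometric compensated compactness theorem (Theorem~\ref{thm: generalized quadratic theorem on manifolds-b}) to the differential operator $\T=\dd$ acting on $\littleonk$-valued $1$-forms together with the quadratic polynomial $Q:\W\mapsto \W\wedge\W$, under which the Cartan structural system becomes $\dd\W_\varepsilon = Q(\W_\varepsilon)$. First, reflexivity of $L^p_\loc$ with $p>2$ and the uniform $L^p_\loc$ bound supply a subsequence, still denoted $\{\W_\varepsilon\}$, with $\W_\varepsilon \weak \W$ weakly in $L^p_\loc$; H\"older's inequality on compact subsets then gives weak convergence in $L^2_\loc$ as well. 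Since $\littleonk$ is a closed linear subspace of $\glnk$ by Lemma~\ref{lemma_skew symmetry of W}, the fiberwise semi-skew-symmetry is a closed linear condition that is preserved under weak limits, so $\W$ is a bona fide connection $1$-form of the prescribed index.

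Next I verify pre-compactness of $\{\dd\W_\varepsilon\}$ in $H^{-1}_\loc$. The uniform $L^p_\loc$-bound on $\W_\varepsilon$ ($p>2$) gives $\{\dd\W_\varepsilon\}$ bounded in $W^{-1,p}_\loc$, while the Cartan equation identifies $\dd\W_\varepsilon = \W_\varepsilon\wedge\W_\varepsilon$, uniformly bounded in $L^{p/2}_\loc\subset \mathcal{M}_\loc$ because $p/2>1$. Murat's interpolation lemma --- transferred to the semi-Riemannian setting by localizing to charts on which the $W^{-1,q}$-topologies with respect to $g$ and the Euclidean metric are equivalent (by the non-degeneracy of the $L^\infty_\loc$ metric, {\it cf.} Step~$4$ of the proof of Theorem~\ref{thm: generalized quadratic theorem on manifolds}) --- then yields the required pre-compactness of $\{\dd\W_\varepsilon\}$ in $H^{-1}_\loc$.

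The cone condition is verified as follows. The principal symbol $\sigma_1(\dd)(\xi)\omega = -2\pi i\,\xi\wedge\omega$ identifies
\begin{equation*}
\Lambda_{\dd} = \{\omega \in T^*M\otimes\littleonk \,:\, \xi\wedge\omega = 0 \text{ for some } \xi\in T^*M\setminus\{0\}\}.
\end{equation*}
If $\xi\neq 0$ and $\xi\wedge\omega=0$, then each matrix entry $\omega^a_b$ is a scalar multiple of $\xi$, so $\omega = \xi\otimes C$ for some $C=\{c^a_b\}\in\littleonk$; consequently $\omega\wedge\omega = (\xi\wedge\xi)\otimes(C\cdot C)=0$, verifying (C-3) after testing against any smooth section $\phi$ of $\wedge^2T^*M\otimes\littleonk^\ast$ (which reduces $Q$ to the real quadratic $\omega\mapsto \langle\phi,\omega\wedge\omega\rangle$). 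Theorem~\ref{thm: generalized quadratic theorem on manifolds-b} now delivers $\W_\varepsilon\wedge\W_\varepsilon \weak \W\wedge\W$ in the sense of distributions. Combined with the automatic weak convergence $\dd\W_\varepsilon\weak\dd\W$ in $L^p_\loc$, passing to the distributional limit in $\dd\W_\varepsilon = \W_\varepsilon\wedge\W_\varepsilon$ yields $\dd\W = \W\wedge\W$, which is the desired conclusion.

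The main obstacle I anticipate is the pre-compactness argument of Murat type: the classical lemma is stated in Euclidean spaces, whereas here the metric $g$ is only of $L^\infty_\loc$ regularity and of nontrivial signature. This is handled by the same partition-of-unity and chart-flattening technique that underpins the proof of Theorem~\ref{thm: generalized quadratic theorem on manifolds}; the $L^\infty_\loc$ non-degeneracy of $g$ guarantees the equivalence of the relevant $W^{-1,q}$-topologies on each chart, so that Murat's lemma applies chart-by-chart and can be patched globally. A secondary bookkeeping point is that Theorem~\ref{thm: generalized quadratic theorem on manifolds-b} is formulated for $\R$-valued quadratics, which we accommodate by pairing the Lie-algebra-valued wedge product against smooth sections of $\wedge^2T^*M\otimes\littleonk^\ast$ component-wise.
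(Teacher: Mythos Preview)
Your proof is correct and, in fact, takes a more direct route than the paper.  The paper does \emph{not} apply Theorem~\ref{thm: generalized quadratic theorem on manifolds-b} to $\T=\dd$ and $Q(\omega)=\omega\wedge\omega$ directly; instead it first wedges $\W_\varepsilon$ with a test $(n-2)$--form $\varphi$, forms the pair of $(n-1)$--forms $V_\varepsilon=\W_\varepsilon\wedge\varphi$ and $Z_\varepsilon=\star\W_\varepsilon$, and then invokes the quadratic theorem with $\T=\dd\oplus\delta$ and the bilinear pairing $Q((\mu,\lambda))=\langle\mu,\lambda\rangle$.  The cone computation in the paper is correspondingly the one for the $\dd\oplus\delta$ div--curl structure on $(n-1)$--forms.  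Your choice sidesteps the Hodge star entirely: since any $\omega\in\Lambda_\dd$ is of rank one, $\omega=\xi\otimes C$, its self--wedge vanishes, and the cone condition is immediate.  What the paper's detour buys is a transparent link to the classical div--curl lemma and to the earlier Riemannian arguments in \cite{chenli,csw2}; what your route buys is economy --- one operator, one constraint, and no need to track the signature $\nu$ through Hodge--star identities.

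The only point where your write--up and the paper differ substantively is the packaging of the $H^{-1}_\loc$ pre--compactness of $\{\dd\W_\varepsilon\}$.  You invoke Murat's interpolation lemma (boundedness in $W^{-1,p}_\loc$ with $p>2$ together with $\dd\W_\varepsilon=\W_\varepsilon\wedge\W_\varepsilon$ bounded in $L^{p/2}_\loc$), localized to charts so as to handle the $L^\infty_\loc$ semi-Riemannian metric.  The paper instead uses the compact Sobolev embedding $L^{p/2}_\loc\hookrightarrow W^{-1,q}_\loc$ for some $q<2$ and then interpolates between $W^{-1,q}_\loc$ and $W^{-1,p}_\loc$ to reach $H^{-1}_\loc$; these are the same idea in different clothing.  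Two small phrasing corrections: your line ``$\dd\W_\varepsilon\weak\dd\W$ in $L^p_\loc$'' should read ``in the sense of distributions'' (the objects live only in $W^{-1,p}_\loc$), and the $x$--dependence of $Q_\phi(\omega)=\langle\phi(x),\omega\wedge\omega\rangle$ is indeed admissible in Theorem~\ref{thm: generalized quadratic theorem on manifolds-b}, since the coefficients $Q_{jk}$ in \eqref{Q quadratic-new} are allowed to be smooth functions on $M$.
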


By ``$\{\W_\varepsilon\}$ with the same index'' we mean that there are fixed positive integers $k$ and $\tau$
such that, for each $\varepsilon$,
$$
\W_\varepsilon \in L^p_\loc (M;\, T^*M \otimes \mathfrak{o}(\nu + \tau, (n+k)-(\nu+\tau))).
$$
That is, $\{\W_\varepsilon\}$ arises from isometric immersions of $M$ into
a fixed semi-Euclidean space $\R^{n+k}_{\nu+\tau}$.

\bigskip
\noindent
{\bf Proof of Theorem {\rm \ref{thm: weak continuity of Cartan's structural eq}}}.
Our goal is to pass to the limit in the system:
\begin{equation}\label{eq_dW epsilon}
{\rm d}\W_\varepsilon = \W_\varepsilon \wedge \W_\varepsilon.
\end{equation}
We divide the proof into four steps.
Throughout the proof, we write
$$
\mathfrak{h}:=\littleonk.
$$

\medskip
\noindent
{\bf 1.}
Take an arbitrary test differential form $\varphi \in C^\infty_c(M\,;\,\wedge^{n-2}T^*M)$.
Then
\begin{equation}\label{eq: W wedge W wedge phi}
\dd\W_\varepsilon \wedge \varphi = \W_\varepsilon \wedge (\W_\varepsilon \wedge \varphi)
= \star\langle \star \W_\varepsilon, \W_\varepsilon \wedge \varphi \rangle,
\end{equation}
where $\star: \wedge^j T^*M \rightarrow \wedge^{n-j}T^*M$ is the {\em Hodge star} operator
(a vector bundle isomorphism), and
$\varphi$ has no $\mathfrak{h}$-component.
In the rest of the proof, we also use $\star$ to denote its natural
extension $\star : \wedge^j T^*M \otimes \h \rightarrow \wedge^{n-j}T^*M \otimes \h$,
given by $\star(\omega \otimes A):=\star\omega \otimes A$ for $\omega \in \wedge^j T^*M$ and $A \in \h$.
In other words, we do not distinguish between $\star$ and $\star \otimes {\rm id}_\h$.

\medskip
\noindent
{\bf 2.} We now determine the differential constraints of Eq. \eqref{eq: W wedge W wedge phi}.

We start from the left-hand side.
Notice that $\dd\W_\varepsilon=\W_\varepsilon\wedge \W_\varepsilon$ with
$$
\W_\varepsilon \wedge \W_\varepsilon \in L^{\frac{p}{2}}_{\loc}(U; {\wedge}^2\, T^*M \otimes \h).
$$
Recall the following compact Sobolev embedding: If $p<2n$,
\begin{equation*}
L^{\frac{p}{2}}_\loc(U; {\wedge}^2\, T^*M \otimes \h) \emb W^{-1,q}_\loc(U; {\wedge}^2\, T^*M \otimes \h)
\qquad \text{ for any } q<\frac{pn}{2n-p}.
\end{equation*}
On the other hand, if $p\geq 2n$, we can first embed
\begin{equation*}
L^{\frac{p}{2}}_\loc (U; {\wedge}^2\, T^*M \otimes \h) \to
L^{\frac{\hat{p}}{2}}_\loc(U; {\wedge}^2\, T^*M \otimes \h)
\qquad \text{ for $2<\hat{p}<2n$},
\end{equation*}
and then compactly embed the right-hand side into $W^{-1,q}_\loc$.
Thus,  $\{\dd\W_\varepsilon\}$ is pre-compact in $W^{-1,q}_\loc(U; \wedge^2T^*M \otimes \h)$
for some $1<q<2$.
On the other hand, the Rellich lemma implies that $\{\dd\W_\varepsilon\}$ is pre-compact
in  $W^{-1,p}_\loc(U; \wedge^2T^*M \otimes \h)$
for $p>  2$.
By interpolation, we find that
$$
\{\dd \W_\varepsilon\} \qquad\text{is pre-compact in  $H^{-1}_\loc(U; \wedge^2T^*M \otimes \h)$}.
$$
Owing to the super-commutativity of $\dd$, we have
\begin{equation*}
\dd(\W_\varepsilon \wedge \varphi) =\dd\W_\varepsilon \wedge \varphi - \W_\varepsilon \wedge \dd\varphi.
\end{equation*}
Therefore, we conclude
\begin{equation}\label{diff constraint: d}
\big\{\dd(\W_\varepsilon \wedge \varphi)\big\} \qquad \text{is pre-compact in $H^{-1}_\loc(U; {\wedge}^2\, T^*M \otimes \h)$}.
\end{equation}

Next, consider the rightmost side of Eq. \eqref{eq: W wedge W wedge phi}.
Recall that the $L^2$-adjoint of $\dd$ (the co-differential),
denoted by $\delta: \wedge^j T^*M \rightarrow \wedge^{j-1} T^*M$ for $1 \leq j \leq n$,
is related to $\dd$ by
\begin{equation*}
\delta = (-1)^{j(n-j)+1} \star \dd \star.
\end{equation*}
The Hodge star extends to an isometric isomorphism
$$
\star\,:\,L^q(U; \wedge^j T^*M)\map  L^q(U; \wedge^{n-j} T^*M)
\qquad\mbox{for each $0\leq j \leq n$}.
$$
For $M$ with signature $\nu$,
\begin{equation*}
\star\star=(-1)^{j(n-j)+\nu} \, {\rm id}_{\wedge^jT^*M},
\end{equation*}
where ${\rm id}$ denotes the identity map.
Then we have obtained another differential constraint:
\begin{equation}\label{diff constraint: delta}
\{\delta \star \W_\varepsilon\} \qquad \text{ is pre-compact in } H^{-1}_\loc (U; \h).
\end{equation}

\medskip
\noindent
{\bf 3.} In view of the arguments in Step 2 above, especially Eqs. \eqref{diff constraint: d}--\eqref{diff constraint: delta},
it suffices to establish the following {\em claim}, which is of generality:

\medskip
\noindent
{\it Claim{\rm :} \, Let $\{V_\varepsilon\}$ be a family of $(n-1)$-forms so that $\{\dd V_\varepsilon\}$ is pre-compact in $H^{-1}_\loc$,
and let $\{Z_\varepsilon\}$ be a family of $(n-1)$-forms so that $\{\delta Z_\varepsilon\}$ is pre-compact in $H^{-1}_\loc$.
Assume that $V_\varepsilon \weak V$ and $Z_\varepsilon \weak Z$ weakly in $L^p_\loc$.
Then $\{\langle V_\varepsilon, Z_\varepsilon\rangle\}$ converges to $\langle {V}, {Z}\rangle$ in the sense of distributions.}
	
\smallskip
Indeed, if the claim is true, we define
	\begin{equation*}
	\begin{cases}
	V_\varepsilon := \W_\varepsilon \wedge \varphi \in L^p_\loc(U; \wedge^{n-1}T^*M\otimes\h),\\[2mm]
	Z_\varepsilon:=\star \W_\varepsilon \in L^p_\loc(U; \wedge^{n-1}T^*M\otimes \h).
	\end{cases}
	\end{equation*}	
The above {\em claim} implies that
$\langle \W_\varepsilon \wedge \varphi, \star \W_\varepsilon\rangle \to \langle {\W}\wedge \varphi, \star {\W}\rangle$
in the sense of distributions.
Using the identities of the Hodge star and the super-commutativity of the wedge product, we deduce
\begin{align*}
\langle \W_\varepsilon \wedge \varphi, \star \W_\varepsilon\rangle &= (-1)^\nu \W_\varepsilon \wedge \varphi \wedge \star \star \W_\varepsilon \nonumber\\
&= (-1)^{2\nu + n-1} \W_\varepsilon \wedge \varphi \wedge \W_\varepsilon \nonumber\\
&= (-1)^{2\nu + 2(n-1)} \W_\varepsilon \wedge \W_\varepsilon \wedge \varphi \nonumber\\
&= \W_\varepsilon \wedge \W_\varepsilon \wedge \varphi.
\end{align*}
Therefore, the previous convergence result is equivalent to the following:
\begin{equation*}
\W_\varepsilon \wedge \W_\varepsilon \wedge \varphi \longrightarrow {\W} \wedge {\W} \wedge \varphi
\qquad \text{ in the sense of distributions}.
\end{equation*}
Since the test form $\varphi$ is arbitrary, the proof is now complete.

\medskip
\noindent
{\bf 4.} We now prove the {\em claim} in Step $3$ by making crucial use
of Theorem \ref{thm: generalized quadratic theorem on manifolds-b}.
The key is to specify operator $\T$ and the vector bundles $E$ and $F$ therein.

Indeed, we define
\begin{eqnarray*}
&&E:=  \big({\wedge}^{n-1}T^*M \otimes \h\big)\oplus \big({\wedge}^{n-1}T^*M \otimes \h\big),\\
&&F:=  \big({\wedge}^{n}T^*M \otimes \h\big)\oplus \big({\wedge}^{n-2}T^*M \otimes \h\big),\\
&& \T:= \dd \oplus \delta,
\end{eqnarray*}
where $\T$ is a bundle operator $\T\,:\,E\rightarrow F$.
In this setting, the operator cone is given by
\begin{align*}
\Lambda_{\T} = \left\{(\mu, \lambda)^\top\in \Gamma(E)\, :
\begin{array}{ll}
[\sigma_1(\dd)(\xi)](\mu)=0 \text{ and } [\sigma_1(\delta)(\xi)](\lambda)=0 \\
\text{ simultaneously for some $\xi \in T^*M \setminus \{0\}$}
\end{array}
\right\},
\end{align*}
where we have utilized
$$
\sigma_1(\dd\oplus \delta) = \sigma_1(\dd) \oplus \sigma_1(\delta).
$$
It is an identity on $\mathcal{P}_1 (T^*M; {\rm Hom}(E; F^\C))$,
{\it i.e.}, the space of first-order homogeneous polynomials
that map the cotangent bundle to the homomorphism bundle from $E$ to $F^\C$.

We can further specify $\Lambda_\T$.
Indeed, recall that the principal symbols of $\dd$ and $\delta$ have global intrinsic
representations ({\it cf.} \S 3.1, \cite{albin}):
\begin{equation*}
\sigma_1(\dd)(\xi) = -(2\pi i) \xi \wedge, \qquad \sigma_1(\delta)(\xi) = (2\pi i ) \iota_{\xi^\sharp},
\end{equation*}
where $\xi^\sharp$ is the element of the tangent bundle $TM$ canonically isomorphic to $\xi$
(which can be obtained by raising the indices in the local coordinates),
and $\iota_X$ is the {\em interior multiplication} of a differential form
by the vector field $X\in \Gamma(TM)$. Then
\begin{align}\label{revise, 2}
\Lambda_{\T} = \left\{(\mu, \lambda)^\top\in \Gamma(E)\,:
\begin{array}{ll}
\xi \wedge \mu  = 0 \text{ and } \iota_{\xi^\sharp}(\lambda) =0 \text{ simultaneously}\\
\text{for some } \xi \in T^*M \setminus \{0\}
\end{array}
\right\}.
\end{align}
Notice that $\xi \wedge \mu =0$ if and only if
$\mu = (\xi \wedge \tilde{\mu}) \otimes A$ for some $A \in \h$ and $\tilde{\mu}\in \wedge^{n-2}T^*M$.
Also, $\iota_{\xi^\sharp}(\lambda)=0$ if and only if $\{\tilde{\lambda}, \xi\}$ span
an orthogonal subspace in $T^*M$ so that $\lambda=\tilde{\lambda}\otimes B$ for $B \in \h$.

Now, define the quadratic polynomial $Q: \Gamma(E)\rightarrow \R$ by
\begin{equation*}
Q((\mu, \lambda)^\top) := \langle \mu, \lambda \rangle.
\end{equation*}
The bracket, $\langle\cdot,\,\cdot\rangle$, on the right-hand side is the combination
of the inner product on $\wedge^{n-1}T^*M$ and the matrix product on $\h$.
Thus, for $(\mu,\lambda)^\top \in \Lambda_{\T}$, we have
\begin{align*}
Q((\mu,\lambda)^\top ) = \langle (\xi \wedge \tilde{\mu})\otimes A, \tilde{\lambda} \otimes B \rangle
=\langle \xi \wedge \tilde{\mu}, \tilde{\lambda}\rangle \otimes (A \cdot B),
\end{align*}
where $\cdot$ denotes the matrix multiplication.

Then $\langle \xi \wedge \tilde{\mu}, \tilde{\lambda}\rangle = 0$.
Indeed, recall that the dot product
$\langle\cdot, \, \cdot\rangle$
on $\wedge^{n-1}T^*M$
is induced from the inner product on $T^*M$ by the following rule:
For two $(n-1)$-tuples of basic elements in the cotangent bundle $T^*M$:
$\{\theta^{i_1}, \ldots, \theta^{i_{n-1}}\}$ and $\{\theta^{j_1}, \ldots, \theta^{j_{n-1}}\}$,
define
\begin{equation}\label{revise, 1}
\big\langle\theta^{i_1} \wedge \ldots\wedge \theta^{i_{n-1}}, \theta^{j_1} \wedge \ldots\wedge \theta^{j_{n-1}}\big\rangle
:= \det \big(\langle \theta^{i_k}, \theta^{j_l}\rangle_{1\leq k, l \leq n-1}\big).
\end{equation}
In particular, if some $\theta^{i_k}$ is orthogonal to $\theta^{j_l}$ in $T^*M$,
then the right-hand side of Eq.  \eqref{revise, 1} vanishes.
By Eq. \eqref{revise, 2} and the ensuing remark,
$\xi$ and $\tilde{\lambda}$ are orthogonal, so that $\langle \xi \wedge \tilde{\mu}, \tilde{\lambda}\rangle=0$.
In effect, we have checked the hypotheses on the operator cone in
Theorem \ref{thm: generalized quadratic theorem on manifolds-b};
that is, the quadratic polynomial $Q$  vanishes on cone $\Lambda_{\T}$.

In view of the above arguments, conditions (C-1)--(C-3)
in Theorem \ref{thm: generalized quadratic theorem on manifolds-b} are verified.
Applying this theorem,
we obtain
\begin{equation*}
Q((V_\varepsilon, Z_\varepsilon)^\top)
:= \langle \W_\varepsilon \wedge \varphi, \star \W_\varepsilon\rangle \longrightarrow \langle \W\wedge \varphi, \star \W\rangle
=: Q((V,Z)^\top)
\end{equation*}
in the sense of distributions.
Then the {\em claim} follows, so that the theorem is proved.

\medskip
The equivalence between the Cartan structural system and
the GCR system (Proposition \ref{prop_second structural equation})
implies the weak continuity of the GCR system:

\begin{theorem}\label{cor: weak continuity of GCR}
Let $(M,g)$ be a semi-Riemannian manifold of dimension $n$ with $\ind(M)=\nu$, $g\in L^\infty_{\loc} (M,  O(\nu, n-\nu))$,
and the Levi--Civita connection $\na$ of $g$ in $L^p_\loc$ for $p>2$.
Assume that a family of second fundamental forms and normal affine connections $\{(\two_\varepsilon, \na^{\perp}_{\varepsilon})\}$
is uniformly bounded in $L^p_\loc$, and  each $(\two_\varepsilon, \na^{\perp}_{\varepsilon})$ satisfies the GCR
system \eqref{gauss}--\eqref{ricci} in the sense of distributions.
Then, after passing to a subsequence if necessary,
$\{(\two_\varepsilon, \na^{\perp}_{\varepsilon})\}$ converges weakly in $L^p_\loc$ to $(\two, \na^\perp)$
that also satisfies Eqs.\, \eqref{gauss}--\eqref{ricci}.
\end{theorem}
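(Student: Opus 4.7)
The plan is to derive this corollary from Theorem \ref{thm: weak continuity of Cartan's structural eq} together with the equivalence established in Proposition \ref{prop_second structural equation}. First, working on an arbitrary trivializing chart $U \subset M$, I would fix a $g$-orthonormal frame $\{\partial_i\}_{i=1}^n$ of $TM|_U$ and a $g^E$-orthonormal frame $\{\partial_\alpha\}_{\alpha=n+1}^{n+k}$ of $E|_U$ and assemble the triple $(\na,\two_\varepsilon,\na^\perp_\varepsilon)$ into a connection $1$-form $\W_\varepsilon$ via Definition \ref{def of W} and the block representation \eqref{schematic rep for W}. Lemma \ref{lemma_skew symmetry of W} guarantees that $\W_\varepsilon$ takes values in $\mathfrak{o}(\nu+\tau,(n+k)-(\nu+\tau))$, and the uniform $L^p_\loc$ bound on $\{\W_\varepsilon\}$ is immediate from the uniform $L^p_\loc$ bounds assumed on $\na$, $\two_\varepsilon$, and $\na^\perp_\varepsilon$ (read off block-by-block from \eqref{eqn-connection one form}).

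By Proposition \ref{prop_second structural equation}, the distributional validity of the GCR system \eqref{gauss}--\eqref{ricci} for $(\two_\varepsilon,\na^\perp_\varepsilon)$ is equivalent to the Cartan structural identity $\dd\W_\varepsilon = \W_\varepsilon\wedge\W_\varepsilon$. Applying Theorem \ref{thm: weak continuity of Cartan's structural eq} to $\{\W_\varepsilon\}$ then produces, along a subsequence, a weak $L^p_\loc$ limit $\W$ satisfying $\dd\W = \W\wedge\W$. Inspecting the blocks in \eqref{schematic rep for W}: the upper-left $(i,j)$-block of $\W_\varepsilon$ is independent of $\varepsilon$ and records the fixed Levi--Civita connection $\na$, so the same block of $\W$ is again $\na$; the off-diagonal $(\alpha,i)$-blocks and the lower-right $(\alpha,\beta)$-block identify the weak $L^p_\loc$ limits of $\two_\varepsilon$ and $\na^\perp_\varepsilon$, which I would denote by $\two$ and $\na^\perp$ respectively. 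A second application of Proposition \ref{prop_second structural equation}, now in the reverse direction, converts $\dd\W=\W\wedge\W$ back into the GCR system \eqref{gauss}--\eqref{ricci} for the pair $(\two,\na^\perp)$.

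To globalize from $U$ to $M$, I would cover $M$ by an at-most countable family of trivializing charts using second countability and run the same diagonal-subsequence extraction as in Step~1 of the proof of Theorem \ref{thm: generalized quadratic theorem on manifolds}; uniqueness of weak $L^p_\loc$ limits on chart overlaps then glues the local limits into globally defined sections $(\two,\na^\perp)\in L^p_\loc(M)$ solving the GCR system on all of $M$. The only genuine point requiring care, and the step I expect to be the main obstacle, is verifying that a change of local $g$- and $g^E$-orthonormal frames on an overlap $U\cap U'$ conjugates $\W_\varepsilon$ by an $O(\nu,n-\nu)\times O(\tau,k-\tau)$-valued transition matrix that is independent of $\varepsilon$, so that the weak-limit operation commutes with the gauge change and the patching is well-defined; once this gauge-equivariance is checked, every remaining step either invokes a cited result or reproduces the partition-of-unity argument already developed in the paper.
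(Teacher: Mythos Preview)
Your plan is correct and is essentially the paper's own argument: the paper derives this result in one line by citing Proposition~\ref{prop_second structural equation} (equivalence of GCR and Cartan) together with Theorem~\ref{thm: weak continuity of Cartan's structural eq}, and you have simply spelled out the details of that passage. The gauge concern you flag is not a genuine obstacle: the transition matrices between orthonormal frames on an overlap $U\cap U'$ depend only on the fixed metrics $g$ and $g^E$, hence are $\varepsilon$-independent $L^\infty_\loc$ functions, and multiplication by such a function commutes with weak $L^p_\loc$ convergence; alternatively, since $\two_\varepsilon$ and $\na^\perp_\varepsilon$ are global tensorial objects, one may extract their global weak limits first and then use the frame-dependent $\W_\varepsilon$ purely as a local device to verify the GCR equations chart by chart.
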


As remarked in the introduction, \S 1, the weak continuity of the Cartan structural
and GCR systems (Theorems~\ref{thm: weak continuity of Cartan's structural eq}--\ref{cor: weak continuity of GCR})
may alternatively be proved by using the compensated compactness theorems in the Euclidean spaces.
For example,
the following ``generalized div-curl lemma''
for wedge products was established as Theorem~1.1 in Robbin--Rogers--Temple \cite{rrt}:

\begin{quote}
\emph{
Let $\alpha_\varepsilon \weak {\alpha}$ in $L^p_\loc(\R^n)$ and let $\beta_\varepsilon \weak {\beta}$ in $L^{p'}_\loc(\R^n)$,
where $\{\alpha_\varepsilon\}, \{\beta_\varepsilon\}, {\alpha}$, and ${\beta}$ are differential forms over $\R^n$
and $\frac{1}{p}+\frac{1}{p'}=1$. Assume that $\{\dd\alpha_\varepsilon\} \subset W^{-1,p}_\loc(\R^n; T^*\R^n)$
and $\{\dd\beta_\varepsilon\} \subset W^{-1,p'}_\loc(\R^n; T^*\R^n)$ are pre-compact.
Then $\alpha_\varepsilon \wedge \beta_\varepsilon\, \map\, {\alpha} \wedge {\beta}$ in the sense of distributions.}
\end{quote}

\noindent
One may apply the above result to deduce Theorem~\ref{thm: weak continuity of Cartan's structural eq} by
computing in local coordinates and adapting the arguments in Chen--Slemrod--Wang \cite{csw2}.
On the other hand,
independent of the goal of proving the $W^{2,p}$ continuity of the GCR and Cartan structural systems,
we comment that an extension for the above theorem
in $\R^n$ to semi-Euclidean
spaces (or more generally, to semi-Riemannian manifolds)
appears elusive. It does \emph{not} follow from direct adaptations of the arguments in \cite{rrt}.
Indeed, the proof of \cite[Theorem 1.1]{rrt}
relies crucially on the ellipticity of the Laplace--Beltrami operator, for which
the following arguments beneath \cite[Eq. (4.26), page 616]{rrt} are central:

\begin{quote}
\emph{
From the continuity of $\Delta^{-1}$ from $W^{-1,p}(\Omega)$ to $W^{1,p}(\Omega)$\footnote{There is a typo in \cite{rrt}:
the second $W^{-1,p}(\Omega)$ therein should be $W^{1,p}(\Omega)$.}, we conclude
\begin{equation*}
\Delta^{-1} {\bf d}\alpha_\varepsilon \in \text{ a compact set in $W^{1,p}(\Omega)$}.
\end{equation*}
}
\end{quote}

\noindent
However, the Laplace--Beltrami operator $\Delta$ on a semi-Riemannian manifold is never elliptic,
unless the manifold is Riemannian, so that the arguments in \cite{rrt} cannot pass through in the semi-Riemannian setting.

To conclude this section, we note that the weak continuity of the GCR and Cartan structural
systems (Theorems \ref{thm: weak continuity of Cartan's structural eq}--\ref{cor: weak continuity of GCR})
does not require any assumption on the topology of $(M,g)$.

\section{\, Realization Theorem: From the Cartan Structural Systems to Isometric Immersions
of Semi-Riemannian Manifolds}\label{sec: realisation}

In this section, we address the following problem:

\begin{quotation}
{\em Given an $n$-dimensional semi-Riemannian manifold $(M,g)$ of lower regularity satisfying the GCR system
$(${\it cf.} Theorem {\rm \ref{theorem GCR equations}}$)$ in the sense of distributions,
seek an isometric immersion $f: (M,g) \emb (\R^{n+k}, g_0)$ with the semi-Euclidean metric $g_0$.}
\end{quotation}
We refer to it as the {\em realization problem} ---  Given a weak solution $(\two, \na^\perp)$
to the compatibility equations, we would like to realize it as the geometric data of an isometric immersion.

For a Riemannian manifold $M$, the realization problem is settled in the affirmative
if $M$ is simply-connected.
The $C^\infty$ case was proved by Tenenblat \cite{Ten71},
and the $W^{2,p}_{\rm loc}$ case for $p>\dim(M)$ by Mardare \cite{Mar05,Mar07}
and Szopos \cite{szopos}.
In \cite{chenli}, we also provided a geometric and intrinsic proof.
Although the realization problem for semi-Riemannian manifolds is viewed as a ``folklore theorem'' ({\it cf.}
Chen \cite{chenby}), we still find it necessary and non-trivial to give a detailed proof.
Indeed, new ideas are required in the following two main points:

\begin{enumerate}
\item[\rm (i)] the interplay of Cartan's formalism and semi-Riemannian geometry,

\smallskip
\item[\rm (ii)] the treatment of manifolds of lower regularity.
\end{enumerate}

\subsection{\, Statement of the Realization Theorem}\label{subsec: statement of realisation}

First of all, we note that the following two conditions are necessary for the realization problem:
\begin{enumerate}
\item[{\rm (R1)}]
The resulting map $f$ must be an immersion of $M$ as a semi-Riemannian submanifold{\rm ;}

\smallskip
\item[{\rm (R2)}]
The indices of manifold $M$ and its normal bundle $TM^\perp = f^*T{\R^{n+k}}/TM$ (see Convention \ref{convention on iota})
add up to the index of the target space:
\begin{equation*}
\ind (M) + \ind (T_xM^\perp) = \ind (\R^{n+k}) \qquad \text{ for each } x\in M.
\end{equation*}
\end{enumerate}

Indeed, condition (R1) holds since $f$ is an isometry ($f^*g_0 = g$),
and a semi-Riemannian metric is non-degenerate by definition.
For example, it rules out the possibility that a semi-Riemannian manifold
is isometrically embedded into the lightcone of the Minkowski spaces.
Condition (R2) is a consequence of (R1) together with the direct
sum decomposition in Eq.\,\eqref{eqn decomposition of the pullback bundle II}.

From now on, we fix the target semi-Euclidean metric to
be $\tilde{g}_0$ (defined as in $\S  \ref{subsec: semi-riem geometry prelims}$):
\begin{align}\label{g_0}
\gzero =\text{diag}(\underbrace{-1,\cdots,-1}_{\text{$\nu$ times}},
\underbrace{1,\cdots, 1}_{\text{$n-\nu$ times}};
\underbrace{-1,\cdots,-1}_{\text{$\tau$ times}}, \underbrace{1,\cdots, 1}_{\text{$k-\tau$ times}}),
\end{align}
and fix $\ind (M) = \nu$.
As before, we write the corresponding semi-Euclidean space as $\R^{n+k}_{\nu + \tau}$.

The main result of this section is Theorem \ref{theorem_main theorem, isometric immersions and GCR} below.
It gives an affirmative answer to the realization problem of semi-Riemannian manifolds
with lower regularity, provided that conditions (R1)--(R2) are satisfied
and that the manifold is simply-connected.

\begin{theorem}\label{theorem_main theorem, isometric immersions and GCR}
Consider  an $n$-dimensional simply-connected semi-Riemannian manifold $(M,g)$
with metric $g\in W^{1,p}_{\loc}(M; O(\nu, n-\nu))$ for $p>n$ and $\nu=\ind (M)\in\{0,1,\cdots,n\}$.
Suppose that $E$ is a bundle over $M$ with fiber $F=\R^k_\tau$,
bundle metric $g^E\in W^{1,p}_{\rm loc}(M; O(\tau, k-\tau))$,
and bundle connection $\na^E\in L^p_{\text{loc}}(M;T^*M\otimes {\rm End} E)$ compatible with $g^E$.
Let $\two\in L^p_{\loc}(M;{\rm Sym}^2T^*M \otimes E)$ be a symmetric two-tensor,
and let $S$ be defined by $g(S_\alpha X, Y)=g^E(\two(X,Y),\alpha)$ for
any $X,Y\in\Gamma(TM)$ and $\alpha\in \Gamma(E)$.
Moreover, assume that the GCR system on $E$ holds in the sense of distributions.
Then there exists a $W^{2,p}_{\loc}$ isometric immersion $f:(M,g)\emb (\widetilde{M}=\R^{n+k}_{\nu+\tau},\,\gzero)$
so that the normal bundle $TM^\perp:={f^*T\widetilde{M}}/{TM}$,
the second fundamental form, and the shape operator induced by $f$ are identified with $E$, $\two$, and $S$,
respectively, and $f$ is unique modulo the rigid motions in $(\widetilde{M}, \gzero)$.

In addition, if $g$, $\na^E$, $g^E$, $\two \in C^\infty$, then there exists a smooth isometric
immersion $f\in C^\infty(M;\widetilde{M})$.
\end{theorem}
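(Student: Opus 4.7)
The plan is to exploit the equivalence between the GCR system on $E$ and the Cartan structural system $\dd\W = \W \wedge \W$ (Proposition \ref{prop_second structural equation}), and then to integrate $\W$ twice: once to obtain an adapted orthonormal frame, and once more to obtain the immersion $f$ itself. Concretely, I first form the Whitney sum bundle $TM \oplus E$, equipped with the direct-sum metric $g \oplus g^E$ of index $\nu + \tau$ and the natural block connection built from $\na$, $\two$, $S$, and $\na^E$ as in Eq.\,\eqref{schematic rep for W}. The hypotheses guarantee $\W \in W^{1,p}_\loc(M; T^*M \otimes \mathfrak{o}(\nu+\tau,(n+k)-(\nu+\tau)))$, which in turn (via Lemma \ref{lemma_skew symmetry of W} and Proposition \ref{prop_second structural equation}) satisfies the Maurer--Cartan equation $\dd\W = \W \wedge \W$ in the sense of distributions.

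The second step is to integrate $\W$. Since $M$ is simply-connected and $\W$ satisfies the Maurer--Cartan equation with values in the Lie algebra $\mathfrak{o}(\nu+\tau,(n+k)-(\nu+\tau))$, I would invoke a low-regularity Poincar\'{e}/Frobenius theorem (of the type developed by Mardare \cite{Mar05,Mar07}) to produce a map
\begin{equation*}
G \in W^{2,p}_\loc(M;\, O(\nu+\tau,(n+k)-(\nu+\tau)))
\end{equation*}
satisfying $G^{-1}\dd G = \W$, unique up to left-multiplication by a constant in $O(\nu+\tau,(n+k)-(\nu+\tau))$. Since $p>n$, the Morrey--Sobolev embedding $W^{1,p}\hookrightarrow C^0$ validates pointwise manipulation of $G$ and its columns. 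The columns of $G$ then furnish a section of an orthonormal frame bundle, split into $n$ tangent-type and $k$ normal-type entries according to the block structure of $\W$.

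The third step is to recover the immersion. Combining $G$ with the co-frame $\theta = (\theta^1,\ldots,\theta^n,0,\ldots,0)$ on $TM \oplus E$, I form the $\R^{n+k}_{\nu+\tau}$-valued $1$-form $\alpha := \theta \cdot G$. A direct computation using $\dd G = G\W$ and the first structural equation $\dd\theta = \theta \wedge \W$ (Eq.\,\eqref{eqn_first structure eq}, which encodes only the torsion-free property of $\na$ and requires no integrability hypothesis) yields $\dd\alpha = 0$. Since $M$ is simply-connected, the Poincar\'{e} lemma (again at $W^{1,p}$ regularity, valid because $\alpha \in W^{1,p}_\loc$) produces $f \in W^{2,p}_\loc(M;\R^{n+k}_{\nu+\tau})$ with $\dd f = \alpha$, unique up to an additive constant. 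A short verification shows that $f^\ast \tilde{g}_0 = g$, that $\dd f$ is injective (so $f$ is an immersion), and that the induced splitting $f^\ast T\widetilde{M}/TM$ is isomorphic as a metric bundle with connection to $(E, g^E, \na^E)$, with induced second fundamental form matching $\two$ and shape operator matching $S$. Uniqueness modulo rigid motions follows from tracing back the indeterminacies in the integrations of $G$ and $f$; smoothness in the $C^\infty$ case is obtained by a standard bootstrap, since the same construction improves regularity by one derivative at each step.

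The main obstacle will be step two: making rigorous sense of the Frobenius/Maurer--Cartan integration at the lower regularity threshold $\W \in W^{1,p}_\loc$ with $p>n$. In particular, the distributional equation $\dd\W = \W\wedge\W$ involves a quadratic term in $\W$ which is only {\it a priori} in $L^{p/2}_\loc$, and the classical smooth Frobenius argument must be replaced by an approximation/fixed-point scheme along a foliation by curves, in the spirit of \cite{Mar05,szopos}. The semi-Riemannian signature does not pose a genuine obstruction here because the range $O(\nu+\tau,(n+k)-(\nu+\tau))$ is still a finite-dimensional Lie group, so the abstract machinery applies verbatim; but one must verify that the constructed $G$ takes values in the correct non-compact semi-orthogonal group, which follows from Lemma \ref{lemma_skew symmetry of W} and the fact that the Maurer--Cartan equation preserves the Lie-algebra constraint along integral curves.
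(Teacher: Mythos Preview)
Your strategy is essentially the paper's own: pass from the GCR system to the Cartan structural system via Proposition~\ref{prop_second structural equation}, solve the Pfaff/Maurer--Cartan system for a semi-orthogonal frame $A$ (your $G$) using Mardare's Lemma~\ref{lemma mardare}, then solve the Poincar\'{e} system $\dd f = \utilde{\Theta}\cdot A$ via the first structural equation \eqref{eqn_first structure eq}, and globalize by monodromy on the simply-connected $M$. The paper organizes this by first disposing of the $C^\infty$ case (Appendix~A.5, a semi-Riemannian adaptation of Tenenblat's Frobenius argument) and then handling the low-regularity case by invoking Lemma~\ref{lemma mardare} directly; you fold both into a single Mardare-type integration, which is fine.

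One bookkeeping slip to fix: under the stated hypotheses, $\W$ is only in $L^p_\loc$, not $W^{1,p}_\loc$ (the Christoffel symbols involve one derivative of $g \in W^{1,p}_\loc$, and $\na^E, \two$ are only assumed $L^p_\loc$). Consequently the integrated frame $G$ lies in $W^{1,p}_\loc$, not $W^{2,p}_\loc$; this is exactly what Lemma~\ref{lemma mardare} delivers. You implicitly acknowledge this later when you note $\W\wedge\W \in L^{p/2}_\loc$. The final regularity $f \in W^{2,p}_\loc$ survives because $\alpha = \theta\cdot G$ is a product of two $W^{1,p}_\loc$ factors with $p>n$, hence still $W^{1,p}_\loc$. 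Also note the paper's convention is $\dd A = \W\cdot A$ (i.e., $\W = \dd A\cdot A^{-1}$), transposed relative to your $G^{-1}\dd G = \W$; this is harmless but affects the precise form of Eq.\,\eqref{compatibility pfaff} when you invoke Mardare.
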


\smallskip
\begin{remark} $\,$ Concerning the statement of
Theorem {\rm \ref{theorem_main theorem, isometric immersions and GCR}}, we have
\begin{enumerate}
\item[\rm (i)]
$\na^E$ is said to be compatible with $g^E$ if, for any $X\in \Gamma(TM)$ and $\alpha,\beta\in \Gamma(E)$,
\begin{equation}\label{revise, 3}
X g^E(\alpha,\beta) = g^E(\na^E_X\alpha,\beta) + g^E(\alpha, \na^E_X\beta).
\end{equation}
For example, the Levi--Civita connection on $M$ is compatible with $g$. As in Convention {\rm \ref{convention of metrics}},
we may express Eq. \eqref{revise, 3} as
$$
X \langle \alpha, \beta\rangle = \langle \na^E_X \alpha, \beta\rangle + \langle\alpha, \na^E_X\beta\rangle.
$$

\smallskip
\item[\rm (ii)]
For a bundle $E$ over $M$, $\text{Sym}^2 E^*$ denotes the space of symmetric $2$-tensors defined on $E$,
{\it i.e.}, each $M\in \Gamma(\text{Sym}^2 E^*)$ satisfies $M(\alpha,\beta)=M(\beta,\alpha)$ for any $\alpha,\beta\in\Gamma(E)$.
Note that, in general, a semi-Riemannian metric $g$ on $M$ does not lie in $\Gamma(\text{Sym}^2 T^*M)$.
Instead, $g\in \Gamma(O(\nu,n-\nu))$
as $g_{ij}\epsilon^j = g_{ji} \epsilon^i$ $(${\it cf}. $\S \ref{subsec: semi-riem geometry prelims}$ for the notations$)$.
\end{enumerate}
\end{remark}

\begin{remark}\label{rem: global topology issue}
$\,$ Theorem {\rm \ref{theorem_main theorem, isometric immersions and GCR}} has a global topological consequence
as follows{\rm :} If the GCR equations on the  abstract vector bundle $E$ are satisfied under the indicated regularity assumptions,
then the trivial rank-$(n+k)$ bundle $T\R^{n+k}_{\nu+\tau}$ has the following Whitney sum decomposition{\rm :}
\begin{equation*}
T\R^{n+k}_{\nu+\tau} = TM \oplus E.
\end{equation*}
\end{remark}

\begin{remark}\label{rem: equivalence of the three}
$\,$ Theorem {\rm \ref{theorem_main theorem, isometric immersions and GCR}}, together with
Proposition {\rm \ref{prop_second structural equation}}, yields the {\em equivalence} of the following statements,
provided that $(M,g)$ is simply-connected and $p>\dim\,M${\rm :}
\begin{enumerate}
\item[\rm (i)]
The existence of isometric immersions of semi-Riemannian manifolds{\rm ;}

\smallskip
\item[\rm (ii)]
The solvability of the GCR system in the sense of distributions{\rm ;}

\smallskip
\item[\rm (iii)]
The solvability of the Cartan structural system in the sense of distributions.
\end{enumerate}
\end{remark}

\subsection{\, Proof of the Realization Theorem, Theorem~\ref{theorem_main theorem, isometric immersions and GCR}}
If everything is $C^\infty$, then the Frobenius theorem on the equivalence of involutive and completely integrable distributions
can be directly applied, and  hence we may adapt the proof  by Tenenblat  \cite{Ten71} for the smooth Riemannian case.
In the case of lower regularity,
we only need to replace the Frobenius theorem with an analogous existence and regularity theorem
for certain first-order PDE systems with Sobolev coefficients.

\medskip
\noindent
{\bf Proof of Theorem {\rm \ref{theorem_main theorem, isometric immersions and GCR}}}.
Without loss of generality, we can first assume the result holds for the $C^\infty$ case.
As remarked above, to this end, we can adapt Tenenblat's arguments in \cite{Ten71},
taking into account various modifications required by non-trivial signatures in the semi-Riemannian setting.
See Appendix A.5 for the details of the proof.

Now we show for the lower regularity case: $g\in W^{1,p}_{\loc}({M, O(\nu, n-\nu)})$.
As in Appendix A.5, assume that the {\em Pfaff} and {\em Poincar\'{e}} systems
with
$$
g\in W^{1,p}_{\loc}({M, O(\nu, n-\nu)}),\quad \W\in L^p_{\loc}(U\subset M; \littleonk)
$$
are solved; that is,
there exist a bundle connection $A$ and an immersion $f$ in the following spaces:
\begin{equation*}
\begin{cases}
A\in W^{1,p}_{\loc}(U\subset M; \bigonk),\\[2mm]
f \in W^{2,p}_{\loc}(M;\widetilde{M}),
\end{cases}
\end{equation*}
such that $\text{ rank} (\dd f) = n$. Then $f$ is indeed an $W^{2,p}_{\loc}$ isometric immersion by construction.
The Pfaff and Poincar\'{e} systems are, respectively, as follows:
\begin{equation}\label{eqn pfaff system'}
\W = \dd A \cdot A^{-1},\qquad A(0)=A(x_0),
\end{equation}
and
\begin{equation}\label{eqn_poincare'}
\dd f = {\underaccent{\widetilde}{\Theta}}
\cdot A,\qquad f(0)=f(x_0),
\end{equation}
where $x_0$ is a given point in a local chart $U \subset M$.

The solvability of the Poincar\'{e} system \eqref{eqn_poincare'} with Sobolev coefficients is easy to be established.
For any given
$$
A\in W^{1,p}_{\loc}(U\subset M; \bigonk),
$$
we want to solve for $f$ in $W^{2,p}_{\loc}(M;\widetilde{M})$.
Since all the results are stated in local Sobolev spaces,
it suffices to assume that $U$ is a smooth bounded open subset of $\R^n$.
In this setting, choose $J_\varepsilon\in C^\infty(\R^n)$ to be the standard mollifier
and set $\Theta_\varepsilon:=J_\varepsilon \ast (\tth \cdot A)$. It follows that
\begin{equation*}
\Theta_\varepsilon \longrightarrow \tth\cdot A \qquad \text{ in } W^{1,p}(U;\widetilde{M})
\,\,\, \text{ as $\varepsilon \rightarrow 0^{+}$}.
\end{equation*}
In particular, $\{\Theta_\varepsilon\}$ is uniformly bounded in $W^{1,p}$.

Now, $\Theta_\varepsilon$ is a smooth closed $1$-form ({\it cf.} Appendix A.5) for each $\varepsilon>0$,
so we can invoke the solvability of the Poincar\'{e} system in the $C^\infty$ case to find
some $f_\varepsilon \in C^\infty(U;\widetilde{M})$ with $\dd f_\varepsilon=\Theta_\varepsilon$.
By adding a constant, we may assume that $\int_U f_\varepsilon \,\dd x = 0$.
Then the  Poincar\'{e} inequality gives us
\begin{align*}
\|f_\varepsilon\|_{W^{2,p}(U;\widetilde{M})} \leq C\big(\|f_\varepsilon\|_{W^{1,p}(U;\widetilde{M})}+\|\Theta_\varepsilon\|_{W^{1,p}(U;\widetilde{M})}\big).
\end{align*}
Hence, thanks to the Rellich lemma and the uniform boundedness
of $\{\Theta_\varepsilon\}\subset W^{1,p}(U;\widetilde{M})$,
we obtain that
$\|f_\varepsilon\|_{W^{2,p}(U;\widetilde{M})}\leq C_0<\infty$.
Therefore, there exists a limiting function $\tilde{f}$ so that $f_\varepsilon \rightarrow \tilde{f}$
in $W^{2,p}(U;\widetilde{M})$ (modulo subsequences) with $\dd \tilde{f} = \tth \cdot A$.

The Pfaff system \eqref{eqn pfaff system'} with Sobolev coefficients is more difficult to tackle:
The Frobenius theorem cannot be directly applied, since we need at least $C^1$--regularity;
in addition, we cannot apply a simple mollification argument,
since the compatibility condition ({\it i.e.}, the second structural system $\dd \W=\W\wedge \W$)
contains quadratic nonlinear terms.

However, the following result
serves for our purpose:

\begin{lemma}[Mardare \cite{Mar07}]\label{lemma mardare}
Let $\Omega\subset \R^n$ be a simply-connected open set,
$x_0 \in \Omega$, and $M_0\in \mathfrak{gl}(l; \R)$. Then the following system{\rm :}
\begin{equation*}
\frac{\partial M}{\partial x^i}={\mathfrak{S}}_i\cdot M, \,\,\, i=1,2,\ldots,n, \,\,\qquad M(x_0)=M_0,
\end{equation*}
with the matrix fields $\mathfrak{S}_i\in L^p_{\loc}(\Omega; \mathfrak{gl}(l;\R))$ for $i=1,2,\ldots, n$, and $p>n$,
has a unique solution $M\in W^{1,p}_\loc (\Omega; \mathfrak{gl}(l; \R))$ if and only if the following compatibility
condition holds{\rm :}
\begin{equation}\label{compatibility pfaff}
\frac{\partial \mathfrak{S}_i}{\partial x^j} - \frac{\partial \mathfrak{S}_j}{\partial x^i}
=[\mathfrak{S}_i, \mathfrak{S}_j] \qquad\text{ for each } i,j=1,2, \dots, n,
\end{equation}
in the sense of distributions.
\end{lemma}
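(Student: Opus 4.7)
The strategy is the standard one for Poincar\'e--Pfaff-type integrability theorems. \emph{Necessity} is a direct computation using the commutativity of distributional mixed partials. For \emph{sufficiency}, the plan is to construct a solution \emph{locally} in a star-shaped neighbourhood of $x_0$ by integrating along radial rays after regularizing $\mathfrak{S}_i$, and then to globalize via the simple-connectedness of $\Omega$ by a monodromy argument.

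For necessity, the Morrey embedding (applicable since $p>n$) gives $M\in C^0_\loc$, so $M$ is invertible near $x_0$ whenever $M_0$ is; the general case reduces to this by perturbation. The Leibniz rule and equality of distributional cross-derivatives yield
\begin{equation*}
(\partial_j\mathfrak{S}_i - \partial_i\mathfrak{S}_j)\,M \;=\; (\mathfrak{S}_j\mathfrak{S}_i - \mathfrak{S}_i\mathfrak{S}_j)\,M,
\end{equation*}
and right-multiplication by $M^{-1}$ gives \eqref{compatibility pfaff} (up to the standard sign convention for $[\cdot,\cdot]$).

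For sufficiency, I fix a convex neighbourhood $U\Subset\Omega$ of $x_0$ and mollify: set $\mathfrak{S}_i^\varepsilon:=J_\varepsilon\ast\mathfrak{S}_i\in C^\infty(U)$. The mollified coefficients do not satisfy exact compatibility, but the defect
\begin{equation*}
\delta_{ij}^\varepsilon \;:=\; \bigl(\partial_j\mathfrak{S}_i^\varepsilon-\partial_i\mathfrak{S}_j^\varepsilon\bigr)-[\mathfrak{S}_i^\varepsilon,\mathfrak{S}_j^\varepsilon]
\;=\; J_\varepsilon\ast[\mathfrak{S}_i,\mathfrak{S}_j] - [\mathfrak{S}_i^\varepsilon,\mathfrak{S}_j^\varepsilon]
\end{equation*}
vanishes in $L^{p/2}_\loc$ as $\varepsilon\to 0$ by a Friedrichs-type commutator estimate, available because $p>n$ forces pointwise products of $L^p$ functions into $L^{p/2}$. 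I then define $M^\varepsilon(x)$ by solving the Carath\'eodory Cauchy problem along the segment from $x_0$ to $x$,
\begin{equation*}
\frac{d}{dt}\, N^\varepsilon(t;x) \;=\; \sum_{i=1}^n (x^i-x_0^i)\,\mathfrak{S}_i^\varepsilon\bigl(x_0+t(x-x_0)\bigr)\,N^\varepsilon(t;x),\qquad N^\varepsilon(0;x)=M_0,
\end{equation*}
setting $M^\varepsilon(x):=N^\varepsilon(1;x)\in C^\infty(U)$.

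The main obstacle is to verify that $M^\varepsilon$ \emph{almost} solves the Pfaff system in $L^p_\loc$. Differentiating $N^\varepsilon(1;x)$ in $x^i$ and invoking the defective compatibility identity, the residual $\partial_i M^\varepsilon - \mathfrak{S}_i^\varepsilon M^\varepsilon$ reduces to a ray integral of $\delta_{ij}^\varepsilon$ contracted with $M^\varepsilon$, which converges to $0$ in $L^p_\loc(U)$. Gronwall along rays, combined with Fubini (valid since $p>n$ yields integrable restrictions of $\mathfrak{S}_i^\varepsilon$ to almost every radial segment), produces a uniform $L^\infty$-bound on $\{M^\varepsilon\}$ and hence a uniform $W^{1,p}(U)$-bound. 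Extracting a weakly convergent subsequence yields $M\in W^{1,p}_\loc(U)$ solving the Pfaff system on $U$. To globalize, I cover $\Omega$ by star-shaped charts and patch the local solutions; simple-connectedness ensures single-valuedness since any loop bounds a $2$-cell on which the mollified compatibility integrates up to a vanishing error. Uniqueness follows from Gronwall along rays applied to the difference of two solutions.
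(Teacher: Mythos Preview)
The paper does not prove this lemma; it is quoted as a black box from Mardare~\cite{Mar07} (see also~\cite{Mar05}), so there is no in-paper proof to compare against.

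Your sketch is a plausible outline, and the necessity direction is fine (modulo the sign convention you flag, and the reduction to invertible $M_0$, which is standard). For sufficiency, however, the step you yourself call ``the main obstacle'' is under-argued in two places. First, the assertion that the residual $\partial_i M^\varepsilon - \mathfrak{S}_i^\varepsilon M^\varepsilon$ ``reduces to a ray integral of $\delta_{ij}^\varepsilon$'' requires an explicit variation-of-parameters computation: differentiating $N^\varepsilon(1;x)$ in $x^i$ produces a linear ODE in $t$ whose inhomogeneity involves $\partial_{x^i}\mathfrak{S}_j^\varepsilon$ along the ray, and only after an integration by parts in $t$ and use of the approximate compatibility does $\delta_{ij}^\varepsilon$ emerge; you have asserted the conclusion without doing this. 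Second, converting the $L^{p/2}_\loc$ smallness of $\delta_{ij}^\varepsilon$ (an $n$-dimensional bound) into an $L^p$ bound on a quantity defined by one-dimensional ray integrals is not a Fubini consequence; you need either a restriction estimate or a maximal-function argument, and ``$p>n$ yields integrable restrictions to almost every segment'' is not by itself enough to close the loop uniformly in $\varepsilon$.

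Mardare's own argument avoids both difficulties: he works directly by a contraction mapping in $W^{1,p}$ on small cubes, using that for $p>n$ one has the product estimate $\|fg\|_{L^p}\lesssim \|f\|_{W^{1,p}}\|g\|_{L^p}$ (via the Morrey embedding $W^{1,p}\hookrightarrow L^\infty$), so the map $M\mapsto M_0+\int_{x_0}^{\,\cdot}\mathfrak{S}\,M$ contracts on a small enough cube. Local uniqueness and the compatibility condition then allow gluing across overlapping cubes, and simple-connectedness gives global single-valuedness. If you want to push your mollification route through, you must fill in the two gaps above; otherwise the fixed-point approach is both shorter and cleaner.
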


As Lemma \ref{lemma mardare} is formulated for $\Omega \subset \R^n$,
we correspondingly take $U\subset M$ as a trivialized local chart
so that bundle $E$ can be regarded as $U \times \R^k$ over $U$.
Hence, on $U$, without loss of generality, we may assume that $[\p_i,\p_j]=0$.
We take
\begin{equation*}
\mathfrak{S}=\W \in L^p_\loc (U; T^*M\otimes\littleonk), \qquad \mathfrak{S}_i = \W(\p_i).
\end{equation*}
Then
\begin{align*}
\p_i \mathfrak{S}_j - \p_j \mathfrak{S}_i
= \p_i(\W(\p_j)) - \p_j(\W(\p_i)) + \W([\p_i,\p_j]) =  \dd\W(\p_i,\p_j).
\end{align*}
On the other hand, we have
\begin{align*}
[\mathfrak{S}_i, \mathfrak{S}_j] = \W(\p_i)\cdot \W(\p_j) - \W(\p_j)\cdot \W(\p_i) = (\W\wedge \W) (\p_i,\p_j).
\end{align*}
Thus, the compatibility condition in Lemma \ref{lemma mardare} is verified by the second structural
system \eqref{eqn_second structural eqn}.
The Pfaff system \eqref{eqn pfaff system'} with Sobolev coefficients is hence uniquely solvable on local charts.

Therefore, we now arrive at the existence of a local isometric immersion in the lower regularity case,
provided that the second structural system (or equivalently, the GCR system)
holds in the sense of distributions.

Finally, we deduce the global existence of an isometric immersion, which follows from a standard monodromy argument.
Given any two points $x,y\in M$ with $x \neq y$, we connect them by a continuous
curve (again since $W^{1,p}_\loc \hookrightarrow C^0_{\rm loc}$ for $p>n$),
denoted by $\gamma: [0,1]\rightarrow M$ with $\gamma(0)=x$ and $\gamma(1)=y$.
More precisely, $\gamma$ is chosen as a continuous representative in the Sobolev space.
Let $f$ be the $W^{2,p}_\loc$ isometric immersion in a neighborhood of $x$,
whose existence is guaranteed by the earlier steps.
We cover $\gamma([0,1])$ by finitely many charts $\{V^1,\ldots,V^N\}$.
By the uniqueness statement in Lemma \ref{lemma mardare},
we can extend the isometric immersion $f$ to $\bigcup_{i=1}^N V^i$,
especially including a neighborhood of $y$.

Thus, it suffices to show that the extension of $f$ is independent of the choice of $\gamma$.
Indeed, if $\eta:[0,1]\rightarrow M$ is another continuous curve connecting $x$ and $y$,
by concatenating $\gamma$ with $\eta$, we form a loop $L\subset M$.
As $M$ is simply-connected, the restriction $f|_L$ is homotopic to a constant map
so that $(f\circ\gamma)(1)=(f\circ\eta)(1)$.
In this way, we have verified that $f$ can be extended to a global isometric
immersion of $M$ into $\widetilde{M}$, provided that $M$ is simply-connected.
This completes the proof.

\medskip
As a remark, in the realization theorem, Theorem \ref{theorem_main theorem, isometric immersions and GCR},
it requires that  $g \in W^{1,p}_\loc$ with $p>n=\dim\,M$.
This is because of both the regularity assumptions in Lemma \ref{lemma mardare}
and the continuity requirements for the topological arguments.
All the other results in this paper hold for $p>2$, regardless of the dimension of $M$.
Also note that $(M,g)$ is assumed to be simply-connected in Theorem~\ref{theorem_main theorem, isometric immersions and GCR},
which prevents the occurrence of branched immersions.

\subsection{\, Weak Rigidity of Isometric Immersions of Semi-Riemannian Manifolds}

Recall that, in Theorem~\ref{thm: weak continuity of Cartan's structural eq},
we have established the weak continuity of the Cartan structural system on a semi-Riemannian $(M,g)$
and, in Proposition \ref{prop_second structural equation},
we have shown the equivalence of the structural system with the GCR system,
both for $p>2$ {\rm regardless of $\dim M$}.
If we translate this PDE-theoretic weak continuity theorem into geometric settings,
then it is unsurprising that the $W^{2,p}_\loc$ isometric immersions of $M$ are weakly rigid.
More precisely, we have

\begin{theorem}\label{theorem_weak rigidity}
Let $(M,g)$ be a semi-Riemannian manifold of dimension $n$
with  $\ind(M)=\nu$, $g\in L^\infty_\loc(M; O(\nu,n-\nu))$,
and the Levi--Civita connection $\na$ of $g$ in $L^p_\loc$ for $p>2$.
Let $\{f_\varepsilon\} \subset W^{2,p}_\loc(M; \R^{n+k})$ be
a family of isometric immersions
of semi-Riemannian submanifolds, with the second fundamental forms $\{\two_\varepsilon\}$
and normal connections $\{\na^{\perp}_{\varepsilon}\}$
satisfying GCR system \eqref{gauss}--\eqref{ricci}.
Assume that $\{f_\varepsilon\}$ is uniformly bounded in $W^{2,p}_\loc$
and  $\R^{n+k}$ is endowed with the semi-Euclidean metric $\gzero$ as in Eq. \eqref{g_0}.
Then, after passing to a subsequence if necessary, $\{f_\varepsilon\}$ weakly converges in $W^{2,p}_\loc$
to an isometric immersion $f \in W^{2,p}_\loc(M;\R^{n+k})${\rm ;}
in addition, the second fundamental form and the normal connection of $f$ are the weak $L^p_\loc$ limits
of $\{\two_\varepsilon\}$ and $\{\na^{\perp}_{\varepsilon}\}$, respectively,
and still satisfy the GCR system.

The same result holds if $\{(\two_\varepsilon, \na^{\perp}_{\varepsilon})\}$ are replaced by
the connection $1$-forms $\{\W_\varepsilon\}$, and the GCR system is replaced by
the Cartan structural system \eqref{eqn_second structural eqn}.
\end{theorem}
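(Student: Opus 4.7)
The plan is to combine $W^{2,p}_\loc$ weak compactness with the weak continuity theorems already established in Theorems \ref{thm: weak continuity of Cartan's structural eq} and \ref{cor: weak continuity of GCR}. I would proceed in four steps: (a) extract a weakly convergent subsequence of $\{f_\varepsilon\}$; (b) pass to the limit in the pointwise isometry identity; (c) identify the weak $L^p_\loc$ limits of the geometric data $\{\two_\varepsilon\}$ and $\{\np_\varepsilon\}$ with the second fundamental form and normal connection of the limit map $f$; and (d) invoke Theorem \ref{cor: weak continuity of GCR} (respectively Theorem \ref{thm: weak continuity of Cartan's structural eq}) for persistence of the GCR system (respectively Cartan structural system) under the weak limit.

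First, the uniform $W^{2,p}_\loc$ bound together with Banach--Alaoglu and a diagonal argument over an exhaustion of $M$ by relatively compact opens delivers a subsequence satisfying $f_\varepsilon \weak f$ weakly in $W^{2,p}_\loc(M;\R^{n+k})$. The compact Rellich--Kondrachov embedding $W^{2,p}_\loc \emb W^{1,p}_\loc$ then upgrades this to strong convergence $\dd f_\varepsilon \map \dd f$ in $L^p_\loc$. Next, in local coordinates the isometry constraint reads $\langle \p_i f_\varepsilon, \p_j f_\varepsilon\rangle_{\gzero} = g_{ij}$, a pointwise quadratic expression in $\dd f_\varepsilon$; strong $L^p_\loc$ convergence of $\dd f_\varepsilon$ lets me pass this quadratic form to the limit in $L^{p/2}_\loc$, yielding $f^*\gzero = g$ almost everywhere. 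The non-degeneracy of $g$ then forces $\dd f$ to have maximal rank a.e., so that $f$ is genuinely an isometric immersion.

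The main obstacle will be the third step. Writing, in a local frame on a trivialising chart $U \subset M$,
\begin{equation*}
\two_\varepsilon(\p_i,\p_j) = \nor_\varepsilon\bigl(\p_i\p_j f_\varepsilon - \Gamma^k_{ij} \p_k f_\varepsilon\bigr),
\end{equation*}
I have to handle a weak-strong product carefully: $\p_i\p_j f_\varepsilon$ is only weakly convergent in $L^p_\loc$, while the projection $\nor_\varepsilon$ onto the normal space attached to $\dd f_\varepsilon$ is a rational function of the entries of $\dd f_\varepsilon$ with denominator bounded away from zero (guaranteed by non-degeneracy of $\gzero$ restricted to $\dd f_\varepsilon(TM)$, itself a consequence of the isometry identity proved in step two). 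Strong $L^p_\loc$ convergence of $\dd f_\varepsilon$ combined with dominated convergence then gives $\nor_\varepsilon \map \nor$ pointwise a.e.~along a further subsequence, so the weak-strong product converges weakly to $\nor(\p_i\p_j f - \Gamma^k_{ij}\p_k f)$, which is exactly the second fundamental form $\two$ of $f$. The uniform $L^p_\loc$ bound on $\{\two_\varepsilon\}$ inherited from the $W^{2,p}_\loc$ bound on $\{f_\varepsilon\}$ then promotes this convergence to weak $L^p_\loc$ convergence. An analogous argument applied to the induced-connection formula $\np_\varepsilon \eta = \nor_\varepsilon(\dd\eta)$ identifies the weak limit of $\{\np_\varepsilon\}$ with the normal connection of $f$.

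With the identification in hand, the final step is immediate: since $(\two_\varepsilon, \np_\varepsilon)$ solves the GCR system for every $\varepsilon$ by hypothesis and the sequence is uniformly bounded in $L^p_\loc$, Theorem \ref{cor: weak continuity of GCR} guarantees that the weak limit $(\two, \np)$ still satisfies \eqref{gauss}--\eqref{ricci}. The second assertion of the theorem, concerning the connection $1$-forms $\{\W_\varepsilon\}$ and the Cartan structural system \eqref{eqn_second structural eqn}, follows by the same strategy upon replacing Theorem \ref{cor: weak continuity of GCR} with Theorem \ref{thm: weak continuity of Cartan's structural eq}; the equivalence recorded in Proposition \ref{prop_second structural equation} ensures the two formulations yield consistent conclusions.
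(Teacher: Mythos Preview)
Your proof is essentially the same approach as the paper's: extract a weak $W^{2,p}_\loc$ limit, use Rellich to pass to strong $L^p_\loc$ convergence of $\dd f_\varepsilon$ and hence of the isometry relation, then invoke Theorem~\ref{cor: weak continuity of GCR} (resp.\ Theorem~\ref{thm: weak continuity of Cartan's structural eq}) for persistence of the GCR (resp.\ Cartan) system under the weak limit. In fact your step (c) --- identifying the weak limit of $\two_\varepsilon$ with the second fundamental form of $f$ via the weak-strong product $\nor_\varepsilon \cdot \p_i\p_j f_\varepsilon$ --- supplies more detail than the paper, which asserts this identification in a single clause; the paper's remark after the proof (citing the identity $\two_{jk} = \p_j\p_k f - \Gamma^i_{jk}\p_i f$) is close in spirit to your argument.
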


\begin{proof}
$\,$ Let $\{f_\varepsilon\}$ be a bounded family in $W^{2,p}_\loc$ where $p>2$.
Then, modulo subsequences, $\{\dd f_\varepsilon\}$ is weakly convergent in $W^{1,p}_\loc$, hence strongly convergent in $L^p_\loc$ due to the Rellich lemma.
Thus, after passing to a subsequence and thanks to the H\"{o}lder inequality,
$\tilde{g}_0 (\dd f_\varepsilon, \dd f_\varepsilon)$ converges strongly in $L^{\frac{p}{2}}_\loc$ to $\tilde{g}_0 (\dd \tilde{f},\dd \tilde{f})$,
which equals to metric $g$ by assumption,
where $\tilde{f}$ is a weak $W^{2,p}_\loc \cap W^{1,\infty}_\loc$ limit of $\{f_\varepsilon\}$.
In addition, by passing to a further subsequence, we may deduce that $\dd f_\varepsilon \rightarrow \dd \tilde{f}\,\,$ {\it a.e.}
from the strong $L^p_\loc$ convergence
and that $\tilde{g}_0 (\dd \tilde{f},\dd \tilde{f})=g$ {\it a.e.} from the strong $L^{\frac{p}{2}}_\loc$ convergence,
by virtue of $p>2$. This shows that $\tilde{f}$ is an isometric immersion, again in the {\it a.e.} sense.

On the other hand, by the weak continuity of the GCR system in Theorem~\ref{thm: weak continuity of Cartan's structural eq},
we find that the second fundamental form and the normal connection of the limiting isometric immersion $\tilde{f}$ --- which are
weak $L^p_\loc$ limits of the related quantities for $f_\varepsilon$ (possibly modulo a further subsequence) --- satisfy the GCR equations
in the sense of distributions.
This observation together with Proposition~\ref{prop_second structural equation} completes the proof.
\end{proof}

In the case that $p>n$, the above result follows directly from the realization
theorem (Theorem \ref{theorem_main theorem, isometric immersions and GCR}),
together with Theorem~\ref{thm: weak continuity of Cartan's structural eq}
and Proposition~\ref{prop_second structural equation}.
In fact,
it can be proved easily for $p>n$ without applying any of the machineries above,
but just using the Sobolev-Morrey embedding $W^{2,p}_\loc \emb W^{1,\infty}_\loc$
and the identity $\two_{jk} = \p_j\p_k f -\G^i_{jk} \p_i f$ (see, {\it e.g.},
Bryant--Griffith--Yang \cite[page 959]{BGY}
for the Riemannian case).
The main point of our arguments here is to extend to the case $p>2$, irrespective of $\dim\,M$.

In particular, we comment that, {\em under the stronger hypotheses} that both $M$ is simply-connected and $p>n =\dim \,M$,
Theorems \ref{thm: weak continuity of Cartan's structural eq}--\ref{cor: weak continuity of GCR} can be deduced
easily from the realization theorem (Theorem~\ref{theorem_main theorem, isometric immersions and GCR}), in view of Remark \ref{rem: equivalence of the three}.

\bigskip
\noindent
{\bf Alternative Proof for
Theorem {\rm \ref{thm: weak continuity of Cartan's structural eq}}--{\rm \ref{cor: weak continuity of GCR}} with $\pi_1(M)=\{0\}$ and $p>n$}.

Without loss of generality, we may assume that $M$ is compact and that $f_\varepsilon$ converges weakly in $W^{2,p}$
to a map $f:M \map\,\R^{n+k}_{\nu+\tau}$.
Since the embedding $W^{1,p} \emb C^0$ is now compact for $p>n$, by choosing continuous representatives in suitable
Sobolev classes, $g_\varepsilon:=f_\varepsilon^\ast \gzero$ converges uniformly to $g:=f^\ast \gzero\in W^{1,p}$.

Note that $f_\varepsilon: (M,g_\varepsilon) \emb (\R^{n+k}_{\nu+\tau}, \gzero)$ and $f:(M,g) \emb (\R^{n+k}_{\nu+\tau}, \gzero)$
are isometric immersions by construction. By the realization theorem, Theorem \ref{theorem_main theorem, isometric immersions and GCR},
the connection $1$-forms $\W_\varepsilon$ and $\W$ (corresponding to $f_\varepsilon$ and $f$, respectively)
satisfy the Cartan structural systems:
$$
\dd\W_\varepsilon=\W_\varepsilon\wedge\W_\varepsilon, \qquad \dd\W = \W \wedge \W.
$$
These two systems are well-defined, with the left-hand sides in $W^{1,p}$ and the right-hand sides in $L^{\frac{p}{2}}$ for $p>n\geq 2$.
Also, Definition \ref{def of W} for the connection $1$-forms implies that $\W_\varepsilon \weak \W$ in $L^p$.
Then Theorem \ref{thm: weak continuity of Cartan's structural eq} follows when $\pi_1(M)=\{0\}$ and $p>n$.
We can conclude Corollary \ref{cor: weak continuity of GCR} from Proposition \ref{prop_second structural equation}.

\medskip
Nonetheless, we emphasize once more that the above short proof is available {\it only for} $p>n$;
the argument does not extend to  the less stringent case $p>2$, even with Theorem~\ref{theorem_weak rigidity} at hand.
This is because the current proof of the realization
theorem (Theorem~\ref{theorem_main theorem, isometric immersions and GCR}; {\it cf.} also Szopos  \cite{szopos}) essentially
needs $p>n$, as it is crucial for Lemma~\ref{lemma mardare}.

\medskip
\section{\, Further Applications}\label{sec: further app}

In this final section, we present some further applications of the results and techniques
developed in \S 2--\S 5 above.
\begin{enumerate}
\item[\rm (i)]
Using the weak continuity of isometric immersions (Theorem \ref{theorem_weak rigidity}),
we show the weak continuity of the constraint equations in general relativity;
\item[\rm (ii)]
Directly utilizing the geometric compensated compactness theorem,  Theorem \ref{thm: generalized quadratic theorem on manifolds-b},
we establish the weak continuity of  quasilinear wave equations satisfying the {\em null condition}
(introduced first by Klainerman \cite{klainerman-1}; see also \cite{christodoulou,klainerman}).
\item[\rm (iii)]
Employing a generalized version of the GCR system, we prove the weak continuity of general immersed hypersurfaces,
{\it i.e.}, the $1$-co-dimensional submanifolds with possibly degenerate induced metrics.
\end{enumerate}

\subsection{\, Weak Rigidity of  Einstein's Constraint Equations}

Let $(V,g)$ be a Lorentzian manifold of dimension $N+1$.
The vacuum Einstein field equation is
$$
{\rm Ric}_g=0,
$$
that is, the Ricci curvature of $g$ vanishes.
This system consists of $\frac{(N+1)(N+2)}{2}$ scalar equations,
in which $N+1$ equations are determined by the initial data on some space-like hypersurface
via the Gauss--Codazzi equations.
These $N+1$ equations are known as Einstein's constraint equations;
see Bartnik--Isenberg \cite{bartnik-isenberg}, Choquet--Bruhat \cite{choquet-bruhat},
Corvino--Schoen \cite{corvino-schoen}, and the references cited therein.

In the Minkowski case $(\R^{N+1}, \mathfrak{m})$, we can show the following theorem:

\begin{theorem}
Let $M$ be a space-like hypersurface of the Minkowski space-time $(\R^{N+1}, \mathfrak{m})$
with a family of immersions $\{f_\varepsilon\}$.
Denote by $\gamma_\varepsilon := f_\varepsilon^*\mathfrak{m}$ the pull-back metrics on $M$.
Suppose that, for each fixed $\varepsilon>0$, $(M, \gamma_\varepsilon)$ satisfies the Einstein constraint equations in the vacuum{\rm :}
\begin{equation}\label{constraint equations}
\begin{cases}
{\rm scal}_\varepsilon + ({\rm tr}_{\gamma_\varepsilon} h_\varepsilon)^2 - |h_{\varepsilon}|^2 = 0,\\
\sum_{j=1}^N \widetilde{\na}^j \big((h_\varepsilon)_{ij} - {\rm tr}_{\gamma_\varepsilon}(h_\varepsilon) (\gamma_\varepsilon)_{ij}\big)=0
\qquad \text{ for } i=1,2,\ldots,n.
\end{cases}
\end{equation}
In the above, $\widetilde{\na}$ is the Levi--Civita connection on $(\R^{N+1}, \mathfrak{m})$,
${\rm scal}_\varepsilon$ is the scalar curvature of $(M, \gamma_\varepsilon)$,
and $h_\varepsilon$ is the second fundamental form{\rm :}
\begin{equation*}
\widetilde{\na}_XY = (\na_\varepsilon)_XY + h_\varepsilon(X,Y)\mathbf{n}_\varepsilon\qquad \text{ for all } X,Y\in\G(TM),
\end{equation*}
where $\na_\varepsilon$ is the Levi--Civita connection on $(M,\gamma_\varepsilon)$ and $\mathbf{n}_\varepsilon$ is the time-like unit normal.
If $\{f_\varepsilon\}$ is uniformly bounded in $W^{2,p}_{\rm loc}(M, \R^{N+1})$ for $p>2$,
then it converges weakly in $W^{2,p}_{\rm loc}$  to an immersion $\tilde{f}: M \map (\R^{N+1}, \mathfrak{m})$
such that $(M, \tilde{f}^*\mathfrak{m})$ satisfies Einstein's constraint equations in the sense of distributions.
\end{theorem}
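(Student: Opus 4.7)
The strategy is to reduce the weak continuity of Einstein's constraint equations to the weak rigidity of isometric immersions (Theorem~\ref{theorem_weak rigidity}), based on the observation that both the Hamiltonian and momentum constraints in \eqref{constraint equations} are contractions of the Gauss and Codazzi equations, combined with the vanishing of the Riemann tensor of Minkowski space. Once we verify that $f_\varepsilon \weak \tilde{f}$ in $W^{2,p}_{\rm loc}$ with $\tilde{f}$ again a space-like isometric immersion into $(\R^{N+1},\mathfrak{m})$ whose second fundamental form $\tilde{h}$ is the weak $L^p_{\rm loc}$ limit of $h_\varepsilon$, the constraints for $(\tilde{\gamma}, \tilde{h})$ will follow by tracing the distributional GCR equations against the strongly converging inverse metric $\tilde{\gamma}^{-1}$.

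The first step is the subsequence extraction: the uniform $W^{2,p}_{\rm loc}$ bound together with Rellich--Kondrachov yields $f_\varepsilon \weak \tilde{f}$ weakly in $W^{2,p}_{\rm loc}$ and strongly in $W^{1,p}_{\rm loc}$; since $\gamma_\varepsilon := f_\varepsilon^* \mathfrak{m}$ is bilinear in $df_\varepsilon$, this produces the strong convergence $\gamma_\varepsilon \to \tilde{\gamma} := \tilde{f}^*\mathfrak{m}$ in $W^{1,p/2}_{\rm loc}$. Passing to a further subsequence with pointwise a.e.\ convergence, $\tilde{\gamma}$ inherits the Riemannian character of each $\gamma_\varepsilon$ almost everywhere, so $\tilde{f}$ is a space-like immersion in the a.e.\ sense. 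Next, the local Gauss formula
\begin{equation*}
\partial_i\partial_j f_\varepsilon - \Gamma^k_{ij}(\gamma_\varepsilon)\,\partial_k f_\varepsilon = h_\varepsilon(\partial_i,\partial_j)\,\mathbf{n}_\varepsilon,
\end{equation*}
whose left-hand side is uniformly bounded in $L^p_{\rm loc}$, together with the strong convergence of $\Gamma(\gamma_\varepsilon)$ in $L^{p/2}_{\rm loc}$ and of $\mathbf{n}_\varepsilon$ (a continuous function of $df_\varepsilon$ once non-degeneracy is assured), identifies the weak $L^p_{\rm loc}$ limit of $h_\varepsilon$ as the second fundamental form $\tilde{h}$ of $\tilde{f}$.

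The second step is the passage to the limit in the GCR system: since each $(\gamma_\varepsilon, h_\varepsilon)$ satisfies \eqref{gauss}--\eqref{ricci} with vanishing ambient Riemann tensor, Theorem~\ref{thm: weak continuity of Cartan's structural eq} together with Proposition~\ref{prop_second structural equation} (equivalently, Corollary~\ref{cor: weak continuity of GCR}) yields that $(\tilde{\gamma}, \tilde{h})$ satisfies the GCR equations in the sense of distributions. Tracing the Gauss equation with $\tilde{\gamma}^{ik}\tilde{\gamma}^{jl}$ produces the Hamiltonian constraint, while tracing the Codazzi equation with $\tilde{\gamma}^{jk}$ produces the momentum constraint, both in the distributional sense. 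The main obstacle is that Theorem~\ref{theorem_weak rigidity} is formulated for a fixed background metric $g$ on $M$, whereas here $\gamma_\varepsilon$ varies with $\varepsilon$; the resolution is to inspect the underlying geometric compensated compactness result (Theorem~\ref{thm: generalized quadratic theorem on manifolds-b}), whose proof uses only the $L^\infty_{\rm loc}$ topology of the metric, so the strong $L^{p/2}_{\rm loc}$ convergence of $\gamma_\varepsilon$ together with uniform a.e.\ non-degeneracy suffices to carry the quadratic weak-continuity estimates through uniformly in $\varepsilon$. A subsidiary technical issue is taking distributional limits in the mixed products $\Gamma(\gamma_\varepsilon)\cdot h_\varepsilon$ arising in the momentum constraint at the sharp exponent $p>2$, which is handled by pairing strong convergence of the Christoffel symbols with weak convergence of $h_\varepsilon$ via Rellich compactness and duality.
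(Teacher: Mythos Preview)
Your approach is correct and essentially the same as the paper's: both invoke Theorem~\ref{theorem_weak rigidity} to conclude that the weak $W^{2,p}_{\rm loc}$ limit $\tilde{f}$ is an isometric immersion whose second fundamental form satisfies the Gauss--Codazzi equations in the sense of distributions, and then observe that the vacuum constraints are contractions of those equations (the paper simply cites Bartnik--Isenberg for this last step). You are in fact more careful than the paper on one point: you correctly note that Theorem~\ref{theorem_weak rigidity} is stated for a \emph{fixed} metric $g$ on $M$ whereas here $\gamma_\varepsilon = f_\varepsilon^*\mathfrak m$ varies with $\varepsilon$, and you sketch why the underlying compensated compactness argument still goes through; the paper's three-line proof invokes Theorem~\ref{theorem_weak rigidity} without flagging this.
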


\begin{proof}
By construction, $f_\varepsilon: (M, \gamma_\varepsilon) \map (\R^{N+1}, \mathfrak{m})$ is an isometric immersion for each $\varepsilon>0$.
Then $f_\varepsilon \weak \tilde{f}$ in $W^{2,p}_{\rm loc}$, where $\tilde{f}$ is an isometric immersion whose
second fundamental form satisfies the Gauss--Codazzi equations in the sense of distributions, by Theorem \ref{theorem_weak rigidity}.
However, the constraint equations  \eqref{constraint equations} are implied
by the Gauss--Codazzi equations (see Bartnik--Isenberg \cite{bartnik-isenberg}).
In view of Remark \ref{rem: equivalence of the three}, the assertion now follows.
\end{proof}

\subsection{\, Weak Continuity of Quasilinear Wave Equations}
Now we give an application of our quadratic theorem of compensated compactness, {\it i.e.},
Theorem \ref{thm: generalized quadratic theorem on manifolds-b},
to the weak continuity of a special class of nonlinear wave equations:
\begin{equation}\label{wave eq}
\Box_\mm \phi^I = F^I(\phi, \p \phi) \qquad \text{ for all } I \in \{1,2,\ldots, N\}.
\end{equation}
This system is posed on $(\R^{3+1}, \mm)$, where $\mm={\rm diag}(-1, 1, 1, 1)$ is the Minkowski metric,
and $F=\{F^I\}_{1\leq I \leq N}$ is the source function.
We are concerned with $\phi=\{\phi^I\}_{1\leq I \leq N}: \R^{3+1} \rightarrow \R^N$.
The source function $F$ consists of quadratic terms with respect to $(\phi, \p\phi)$,
where $\p$ denotes the total space-time derivative.

A classical result due to Christodoulou \cite{christodoulou} and Klainerman \cite{klainerman}
is the following: When the smooth initial data is sufficiently small, the Cauchy problem for Eq. \eqref{wave eq} has a unique
solution $\phi \in C^\infty_c([0,\infty) \times \R^3; \R^N)$,
provided that $F$ satisfies the {\em null condition}{\rm :}
\begin{enumerate}
\item[(i)] $F^I(0)=0$ and $\p F^I(0)=0$,

\smallskip
\item[(ii)] $Q_{F^I}(\p \phi) = \sum_{J,K=1}^N \sum_{\mu, \nu =0}^3 A^{\mu \nu}_{IJK} (\p_\mu \phi^J) (\p_\nu \phi^K)$
  for each $I\in\{1,2,\ldots,N\}$ with
\begin{align*}
\sum_{\mu, \nu =0}^3  A^{\mu \nu}_{IJK} \xi_\mu \xi_\nu =0
\end{align*}
for any null co-vector $\xi \in T^*\R^{3+1}$ and $I,J,K\in\{1,2,\ldots,N\}$,
 where $Q_{F^I}$ denotes the quadratic part in $\p\phi$ in the Taylor expansion of $F^I$ at $(\phi, \p\phi)=0$:
\begin{equation*}
	Q_{F^I} (z):= \sum_{|\alpha|=2} \frac{\p_\alpha F^I(0)}{\alpha!} z^\alpha
   \qquad\, \text{ for all } z \in \R^N
\end{equation*}
in the multi-index notations,
and  $\xi \in T^*\R^{3+1}$ is a null co-vector if and only if $\mm^{\mu\nu}\xi_\mu\xi_\nu=0$.
\end{enumerate}

For our purpose, we take the following bundle of type--$(1,1)$ tensors:
\begin{equation*}
E=T^*\R^{3+1}\otimes T\R^N.
\end{equation*}
Then, for each $I\in\{1,2,\ldots,N\}$, define the bundle operator $\T_{I} \in {\rm Hom} (E;\R)$:
\begin{equation}\label{operator T_I}
\T_{I} \, s:= \sum_{J,K=1}^N \sum_{\mu,\nu=0}^3 A^{\mu \nu}_{IJK} (\p_\nu s^J_\mu) \theta^K,
\end{equation}
where $\{\theta^K\}\subset T^*\R^N$ is the co-vector basis dual to $\{\p_K\}$.
The associated operator cone is
\begin{align}\label{operator cone for T_I}
\Lambda_{\T_{I}}:=\left\{\lambda\in T^*\R^{3+1} \otimes T\R^N\,:
\begin{array}{ll}
 \sum_{\mu,\nu=0}^3  A^{\mu\nu}_{IJK} \lambda^J_\mu s^K_\nu = 0 \text{ for some non-zero}\\[1mm]
\text{ section } s \in \Gamma( T^*\R^{3+1}\oplus T\R^N)\setminus\{0\}
\end{array}
\right\}.
\end{align}

The following observation is crucial: For each null co-vector $\xi \in T^*\R^{3+1}$,
if it is identified with $\xi \otimes {\rm id} \in T^*\R^{3+1} \otimes T\R^N$ (where ${\rm id}$ is the tautological tensor on $T\R^N$),
then it lies in $\Lambda_{\T_{I}}$.
In other words, the {\em null cone} of the space-time $(\R^{3+1}, \mm)$ can be viewed as a subset of the operator
cone $\Lambda_{\T_{I}}$ for every $I\in \{1,2,\ldots,N\}$.

Also, for each $I \in \{1,2,\ldots,N\}$, consider the quadratic form:
\begin{align}\label{quadratic form QFI}
Q_{F^I} (s):= \sum_{J,K=1}^N \sum_{\mu,\nu=0}^3 A^{\mu\nu}_{IJK} s_\nu^J s_\mu^K
\,\,\,\,\text{ for $s=\{s_\mu^J\}_{1 \leq J \leq N, 0 \leq \mu \leq 3} \in \Gamma (E)$}.
\end{align}
It can be defined intrinsically on $\Gamma(E)$.
It is easy to check
that $Q_{F^I}$ agrees with the quadratic terms in $\p\phi$ of the source term $F^I$.

Now, applying Theorem \ref{thm: generalized quadratic theorem on manifolds-b} to the sequence
of sections
$$
\{\p \phi_\varepsilon\}\subset L^2_{\loc}(\R^{3+1}; T^*\R^{3+1}\otimes T\R^N),
$$
we obtain the following compensated compactness framework, which enables us
to verify the $H^1_{\rm loc}$ weak continuity of Eq. \eqref{wave eq}.
Indeed, it requires to pass the limits in the source term $F^I(\phi_\varepsilon, \p \phi_\varepsilon)$,
as the left-hand side of the equation is linear in $\phi_\varepsilon$.

\begin{proposition}\label{propn: wave equation}
Let the source term $F^I(\phi, \p\phi)$
satisfy the null condition so that
\begin{equation}\label{N-(iii)}
Q_{F^{I}}(s)=0 \qquad \mbox{for any $s \in \Lambda_{\T^{I}}$},
\end{equation}
where the operator cone $\Lambda_{\T^{I}}$ is defined according
to Eqs. \eqref{operator cone for T_I}--\eqref{quadratic form QFI}.
Assume that $\{\phi_\varepsilon\}$ is a family of functions in $H^1_{\loc}(\R^{3+1}, \R^N)$ such that
\begin{enumerate}
\item[\rm (i)]
$\phi_\varepsilon \weak \phi$ weakly in ${H}^1_\loc${\rm ;}
\smallskip
\item[\rm (ii)]
$\big\{\sum_{J=1}^N\sum_{\mu,\nu=0}^3 A^{\mu\nu}_{IJK} \p_\mu \p_\nu \phi_\varepsilon^J\big\}$
is pre-compact in $H^{-1}_{\loc}(\R^{3+1})$ for all $I,K\in\{1,2,\ldots,N\}$.
\end{enumerate}
Then
$$
Q_{F^{I}}(\p\phi_\varepsilon)  \weak Q_{F^{I}}(\p\phi) \qquad
\mbox{in the sense of distributions}.
$$
As a consequence, if Eq. \eqref{wave eq} admits
a family of weak solutions $\{\phi_\varepsilon\} \subset H^1_{\loc}(\R^{3+1}, \R^N)$
satisfying {\rm (i)}--{\rm (ii)}, then
the weak limit $\phi$ in $H^1$ is also a weak solution of \eqref{wave eq}.
\end{proposition}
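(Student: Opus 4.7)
The plan is to apply Theorem \ref{thm: generalized quadratic theorem on manifolds-b} directly to the setup already prepared in the exposition. I take the base semi-Riemannian manifold to be $(\R^{3+1}, \mm)$ (which is smooth and non-degenerate, so the regularity hypotheses on the metric are trivially met), the vector bundle $E = T^*\R^{3+1}\otimes T\R^N$ with the natural bundle metric induced from $\mm$ and the Euclidean product on $T\R^N$, the sequence of sections $u_\varepsilon := \p\phi_\varepsilon \in L^2_\loc(\R^{3+1}; E)$, the first-order differential operator $\T_I \in \diff^1(\R^{3+1}; E, \R)$ defined by Eq. \eqref{operator T_I}, and the real quadratic polynomial $Q_{F^I}$ of Eq. \eqref{quadratic form QFI}.

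The second step is to verify conditions (C-1)--(C-3) of Theorem \ref{thm: generalized quadratic theorem on manifolds-b}. Condition (C-1) follows immediately from hypothesis (i): weak convergence $\phi_\varepsilon \weak \phi$ in $H^1_\loc$ yields $u_\varepsilon \weak \p\phi$ in $L^2_\loc(\R^{3+1}; E)$. Condition (C-2) is precisely hypothesis (ii), since a direct computation gives
\[ \T_I(\p\phi_\varepsilon) = \sum_{J,K=1}^N\sum_{\mu,\nu=0}^3 A^{\mu\nu}_{IJK}(\p_\nu\p_\mu\phi^J_\varepsilon)\,\theta^K, \]
whose pre-compactness in $H^{-1}_\loc(\R^{3+1})$ is exactly what is assumed. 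Condition (C-3) is the null structure hypothesis \eqref{N-(iii)}, which states $Q_{F^I}(s) = 0$ for every $s \in \Lambda_{\T_I}$. Invoking Theorem \ref{thm: generalized quadratic theorem on manifolds-b} then yields the distributional weak convergence $Q_{F^I}(\p\phi_\varepsilon) \weak Q_{F^I}(\p\phi)$, which is the first assertion.

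For the consequence regarding weak solutions of Eq. \eqref{wave eq}, I would pass to the distributional limit term by term. Since $F^I$ consists of quadratic terms in $(\phi, \p\phi)$, it decomposes as a linear combination of monomials of types $\phi^J\phi^K$, $\phi^J\p_\mu\phi^K$, and $\p_\mu\phi^J\p_\nu\phi^K$. The first two classes pass to the weak limit via the Rellich compact embedding $H^1_\loc(\R^{3+1}) \emb L^q_\loc(\R^{3+1})$ for any $q<6$, which upgrades the $H^1_\loc$-weak convergence of $\phi_\varepsilon$ to strong $L^q_\loc$ convergence and, combined with the weak $L^2_\loc$ convergence of $\p\phi_\varepsilon$, yields distributional convergence of these products. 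The genuinely quadratic terms in $\p\phi$ --- gathered into $Q_{F^I}(\p\phi_\varepsilon)$ --- are handled by the result just established. The left-hand side $\Box_\mm\phi^I_\varepsilon$ is linear in $\phi$ and converges weakly in $H^{-1}_\loc$ to $\Box_\mm\phi^I$, so the weak-solution property is preserved in the limit.

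The only substantive point of the argument is the algebraic identification already encoded in the setup above: the Klainerman null condition on the coefficients $\{A^{\mu\nu}_{IJK}\}$ translates precisely into the vanishing of $Q_{F^I}$ on the operator cone $\Lambda_{\T_I}$, via the observation that any null co-vector $\xi \in T^*\R^{3+1}$ (suitably embedded in $E$) lies in $\Lambda_{\T_I}$. This is the bridge between Klainerman's algebraic null structure and Tartar--Murat-type compensated compactness; once it is in place, verifying (C-1)--(C-3) is mechanical. The expected obstacle --- that weak $L^2_\loc$ convergence of $\{\p\phi_\varepsilon\}$ alone cannot resolve products of derivatives --- is overcome by the second-order differential constraint in hypothesis (ii), which supplies exactly the $H^{-1}_\loc$ control that the geometric intrinsic formulation of Theorem \ref{thm: generalized quadratic theorem on manifolds-b} demands.
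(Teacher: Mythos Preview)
Your proposal is correct and follows exactly the approach the paper intends: apply Theorem~\ref{thm: generalized quadratic theorem on manifolds-b} with $u_\varepsilon=\p\phi_\varepsilon$, $E=T^*\R^{3+1}\otimes T\R^N$, operator $\T_I$ from Eq.~\eqref{operator T_I}, and $Q_{F^I}$ from Eq.~\eqref{quadratic form QFI}, then handle the remaining terms in $F^I$ by Rellich compactness. One small slip: since $\R^{3+1}$ is four-dimensional, the compact Sobolev embedding $H^1_\loc\emb L^q_\loc$ holds for $q<4$ rather than $q<6$, but this is harmless since only $q=2$ is actually needed for your product argument.
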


In particular, a necessary condition for \eqref{N-(iii)}
above is that
$Q_{F^{I}} (\xi \otimes {\rm id}) = 0$ for any null co-vector $\xi \in T^*\R^{3+1}$.

The above proposition shows that the quasilinear wave equation with null condition in $3+1$ dimensions
is weakly continuous in $H^1_{\rm loc}$.
However, it is well-known ({\it cf.} Rodnianski \cite{rodnianski}) that the Einstein equations fail to satisfy the null conditions,
even in the vacuum or scalar field cases.
It would be interesting to analyze further the weak continuity of the Einstein equations and other physical/geometric PDEs.

\subsection{\, Weak Rigidity of General Immersed Hypersurfaces}

We now discuss the weak rigidity of immersed hypersurfaces that are {\em not} semi-Riemannian submanifolds of the ambient spaces.
It is remarked in $\S \ref{subsec: statement of realisation}$ ({\it cf.} Condition (R1)) that,
if metric $\gzero$ is degenerate on a hypersurface $\Sigma$,
then $\Sigma$ cannot be obtained via an isometric immersion of any semi-Riemannian manifold.
Nevertheless, such degenerate scenarios occur naturally in physics.

One primary example is the {\em lightcone}:
$$
\Lambda=\{(t,x_1,x_2,x_3) \in \R^4\,:\, t^2 = x_1^2+x_2^2+x_3^2\}
$$
of the Minkowski space-time $(\R^{3+1}, \mathfrak{m})$ with $\mathfrak{m}= \text{diag}(-1,1,1,1)$.
Although, for any $x,v\in \Lambda$, $g_x(v,w)\neq 0$ for all time-like vectors $w$ in the tangent space at $x$,
we see that $g_x(v,\cdot) \equiv 0$ on $T_x\Lambda$, where $\Lambda$ is known as a {\em null hypersurface}.
In addition, the stationary limit surface of Kerr's vacuum solution is everywhere time-like, except at the points on the axis
where it is null and tangent to the horizon ({\it cf}. \cite{mars}).
A more recent example in \cite{mars2} is the gluing of two Anti-de-Sitter (AdS) $5$-dimensional space-times
with different cosmological constants along a general hypersurface $\Sigma = \Sigma^E \sqcup \Sigma^{\rm null} \sqcup \Sigma^L$,
where $\Sigma^{\rm null}$ is $3$-dimensional, such that the restriction of the metric is time-like on $\Sigma^L$,
space-like on $\Sigma^E$, and null on $S$.
If the coordinate system is suitably chosen, $\Sigma^{\rm null}$ may lie in the hypersurface of form $\{t=t_0\}$.
This example gives a possible model for the transition between two distinct AdS universes across
{\it brane} $\Sigma$, whence $\Sigma^{\rm null}$ models the big-bang singularity.

Motivated by the physical applications above,
a treatment for the realization problem and the weak rigidity of general hypersurfaces is desired.
However, the constructions in $\S \ref{subsec: GCR}$, especially the derivation of the GCR system or the Cartan structural system,
fail in this case --- the orthogonal decomposition of tangent spaces
as in Eq. \eqref{eqn decomposition of the pullback bundle} is no longer valid.

To overcome this difficulty, we employ the construction of {\em rigging vector fields}; {\it cf.} \cite{lefloch1,lefloch2,mars,schouten}.
The idea is as follows: Consider the hypersurface via the local embedding $\iota: \Sigma \emb (\widetilde{M}, \tilde{g})$.
If $\iota^*\tilde{g}$ is null, we find a non-vanishing vector field $\ell \in \Gamma (\iota^* T\widetilde{M})$ along $\Sigma$ so that
\begin{equation}\label{the orthogonal splitting for rigging}
T_{\iota(x)} \widetilde{M} \cong T_x\Sigma \oplus  {\rm span }\{\ell_x\}.
\end{equation}
Thus, we can derive the Gauss--Codazzi equations (for hypersurfaces, the Ricci equation is always trivial) from the orthogonal
decomposition \eqref{the orthogonal splitting for rigging}.
However, technicalities are unavoidable because the rigging field $\ell$ {\em never} coincides with the normal vector field,
whenever $\Sigma$ is null --- this leads to three Codazzi equations instead of one.

From now on, $\alpha$ always denotes a co-vector field, {\it i.e.}, an element of $\Gamma(T^*\Sigma)$.
This is in agreement with \cite{mars,schouten}. The first main result in this subsection is

\begin{theorem}\label{theorem for hypersurface}
Let $\iota: \Sigma \emb (\R^{n+1}, \gzero)$ be a $W^{2,p}_{\loc}$ immersion of a simply-connected general hypersurface for $p>n$,
for which the pullback tensor $\iota^*\gzero$ is allowed to degenerate on $\Sigma$.
Let the normal $1$-form of $\Sigma$ to be  $\nn \in \Gamma(\iota^*T^*\R^{n+1})$.
Moreover, assume that $\ell \in \Gamma(T\R^{n+1})$ is a rigging vector field, {i.e.}, $\nn(\ell) =1$ everywhere on $\Sigma$.
Take $\{e^i\}_{i=1}^n\subset \Gamma(T\Sigma)$ as an orthonormal frame on $\Sigma$,
and $\{\theta^i\}_{i=1}^n\subset \Gamma(T^*\Sigma)$ as its co-frame.
Furthermore, define the tensor fields $K \in W^{1,p}_\loc (\Sigma; \wedge^2 T^*\Sigma)$
and $\Psi \in W^{1,p}_\loc (\Sigma; T\Sigma \otimes T^*\Sigma) = W^{1,p}_\loc (\Sigma; {\rm End }\, T\Sigma)$ by
\begin{equation*}
K:=\widetilde{\na}\nn,\qquad \Psi := \widetilde{\na}\ell,
\end{equation*}
that is,
\begin{align*}
K(X,Y) = \widetilde{\na}\nn (X,Y),\qquad \Psi(\alpha, X):= \alpha(\widetilde{\na}_X \ell)
\end{align*}
for each $X,Y \in \Gamma(T\Sigma)$ and $\alpha \in \Gamma(T^*\Sigma)$.
Define also $\psi \in W^{1,p}_\loc (\Sigma; T^*\Sigma)$ by
\begin{equation*}
\psi (X):= \nn(\widetilde{\na}_X \ell).
\end{equation*}
Then the following equations hold in the sense of distributions{\rm :}
\begin{eqnarray}
&&\alpha(R(X,Y)Z) - K(Y,Z)\Psi(\alpha,X) + K(X,Z)\Psi(\alpha, Y)=0,\label{hypersurface gauss}\\[2mm]
&&K(X, {\na}_YZ) - K(Y, {\na}_XZ) + K([X,Y],Z) + XK(Y,Z)- YK(X,Z)  \nonumber\\
&&\qquad - K(Y,Z)\psi(X) + K(X,Z)\psi(Y)=0,\label{hypersurface codazzi 1}\\[2mm]
&&X\Psi(\alpha, Y) - Y\Psi(\alpha, X) + \Psi(\alpha, [X,Y]) + \psi(Y)\Psi(\alpha, X)- \psi(X)\Psi(\alpha,Y) \nonumber\\
&&\qquad+\sum_{i=1}^n \big\{\Psi(\theta^i, Y)\alpha({\na}_X e^i)
  - \Psi(\theta^i, X)\alpha({\na}_Ye^i)\big\}=0,\label{hypersurface codazzi 2}\\
&&X\psi(Y) - Y\psi(X) + \psi([X,Y])\nonumber\\
 &&\qquad + \sum_{i=1}^n \big\{K(e^i, Y)\Psi(\theta^i, X)- K(e^i, X)\Psi(\theta^i, Y)\big\}=0 \label{hypersurface codazzi 3}
\end{eqnarray}
for $X,Y,Z \in \Gamma(T\Sigma)$ and $\alpha\in \Gamma(T^*\Sigma)$ such that $\alpha(l)=0$,
and $R$ is the Riemann curvature of $\Sigma$.

Conversely, if Eqs. \eqref{hypersurface gauss}--\eqref{hypersurface codazzi 3}
hold in the sense of distributions
for $K \in W^{1,p}_\loc(\Sigma; \wedge^2 T^*\Sigma)$, $\Psi \in W^{1,p}_\loc (\Sigma; {\rm End }\, T\Sigma)$,
and $\psi \in W^{1,p}_{loc}(\Sigma; T^*\Sigma)$, then there exist an immersion $\iota \in W^{2,p}_{\loc}(\Sigma; \R^{n+1})$
and a rigging vector field $\ell \in \Gamma(T\R^{n+1})$
such that $K=\widetilde{\na}\nn$, $\Psi = \widetilde{\na}\ell$, and $\psi(X)=\nn(\widetilde{\na}_X \ell)$.
\end{theorem}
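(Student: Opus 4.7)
The plan is to adapt the rigging construction to supplant the orthogonal decomposition from Eq.~\eqref{eqn decomposition of the pullback bundle II}. Since $\nn(\ell)=1$ holds pointwise, the splitting
\[ T_{\iota(x)}\R^{n+1} \cong T_x\Sigma \oplus \text{span}\{\ell_x\} \]
is available everywhere without requiring any non-degeneracy of $\iota^*\gzero$. For $X,Y \in \Gamma(T\Sigma)$, one decomposes
\[ \widetilde{\na}_X Y = \na_X Y + K(X,Y)\,\ell, \qquad \widetilde{\na}_X \ell = \Psi^\sharp(X) + \psi(X)\,\ell, \]
where $\na_X Y$ and $\Psi^\sharp(X)$ denote the $T\Sigma$-components under the rigging splitting, and the coefficients of $\ell$ are recovered by applying $\nn$, using $\nn(\ell)=1$ and $\nn(Y)=0$ for $Y\in\Gamma(T\Sigma)$.

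For the forward direction, equations \eqref{hypersurface gauss}--\eqref{hypersurface codazzi 3} follow from the flatness $\widetilde{R}\equiv 0$ of $(\R^{n+1},\gzero)$. I would expand the two identities $[\widetilde{\na}_X,\widetilde{\na}_Y]Z - \widetilde{\na}_{[X,Y]}Z = 0$ and $[\widetilde{\na}_X,\widetilde{\na}_Y]\ell - \widetilde{\na}_{[X,Y]}\ell = 0$ using the two decomposition formulae above, and then pair each with covectors $\alpha\in\Gamma(T^*\Sigma)$ satisfying $\alpha(\ell)=0$ and with the normal form $\nn$, respectively, to obtain the four equations. Under $\iota\in W^{2,p}_\loc$ with $p>n$, the Sobolev--Morrey embedding $W^{1,p}\hookrightarrow C^0\cap L^\infty$ ensures that each product of the coefficients $K, \Psi, \psi$ and $\na$ is a well-defined distribution, so the identities hold in the sense of distributions.

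For the converse, I parallel the proof of Theorem~\ref{theorem_main theorem, isometric immersions and GCR}. Assemble an $(n+1)\times(n+1)$ connection $1$-form $\W \in L^p_\loc(\Sigma; T^*\Sigma \otimes \mathfrak{gl}(n+1;\R))$ whose block entries encode the Christoffel symbols of the tangential connection together with $(K,\Psi,\psi)$ in an adapted frame $(e_1,\ldots,e_n;\ell)$. A direct computation identifies the distributional Maurer--Cartan identity
\[ d\W = \W\wedge\W \]
with the system \eqref{hypersurface gauss}--\eqref{hypersurface codazzi 3}. Applying Lemma~\ref{lemma mardare} on a simply-connected local chart then yields a $W^{1,p}_\loc$ frame-field matrix $A$ solving the Pfaff system $dA = \W\cdot A$; the mollification argument for the Poincar\'{e} system from Section~\ref{sec: realisation} produces an immersion $\iota\in W^{2,p}_\loc(\Sigma;\R^{n+1})$ together with a rigging field $\ell\in\Gamma(\iota^*T\R^{n+1})$ realizing $A$. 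A standard monodromy argument, using $\pi_1(\Sigma)=\{0\}$ and the embedding $W^{2,p}\hookrightarrow C^1$ for $p>n$, globalizes the construction.

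The main obstacle is the bookkeeping for $\W$. Unlike the semi-Riemannian case (Proposition~\ref{prop_second structural equation} and Lemma~\ref{lemma_skew symmetry of W}), the rigging splitting is never $\gzero$-orthogonal when $\iota^*\gzero$ degenerates, so $\W$ no longer lies in a semi-orthogonal Lie subalgebra but in the full $\mathfrak{gl}(n+1;\R)$, with a non-zero $(n+1,n+1)$-entry carrying $\psi$. One must verify carefully that the $\psi$-dependent cross-terms appearing in \eqref{hypersurface codazzi 1}--\eqref{hypersurface codazzi 3} match precisely the mixed block-products generated in $\W\wedge\W$; this asymmetry is the algebraic source of the additional terms relative to the standard GCR system, and getting the signs, placements, and distinctions between the three Codazzi-type equations correct is the most delicate step of the argument.
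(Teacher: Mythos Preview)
Your proposal is correct and follows essentially the same approach as the paper: derive the four equations by expanding $\widetilde{R}(X,Y)Z=0$ and $\widetilde{R}(X,Y)\ell=0$ via the rigging decomposition and contracting with $\alpha$ and $\nn$; for the converse, package $(\Gamma,K,\Psi,\psi)$ into a $\mathfrak{gl}(n+1;\R)$-valued connection $1$-form $\W_\Sigma$, identify the system with $d\W_\Sigma=\W_\Sigma\wedge\W_\Sigma$, and invoke Lemma~\ref{lemma mardare} followed by the Poincar\'e-system argument. One minor caution: with the theorem's convention $K:=\widetilde{\na}\nn$ one has $\nn(\widetilde{\na}_XY)=-K(X,Y)$, so the decomposition reads $\widetilde{\na}_XY=\na_XY-K(X,Y)\ell$ rather than with a plus sign---this affects the sign bookkeeping you flag as the delicate step.
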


Eq.  \eqref{hypersurface gauss} and Eqs. \eqref{hypersurface codazzi 1}--\eqref{hypersurface codazzi 3}
are known as the {\em Gauss} equation and the {\em Codazzi} equations
of the general hypersurface $\Sigma$, respectively.
As in the physics literature ({\it cf.} \cite{clarke3,mars,schouten}),
the geometric quantities $\{K, \Psi, \psi\}$ are interpreted as
the {\em intrinsic, extrinsic, and normal second fundamental forms}
of $\Sigma$, respectively.
If metric $\gzero$ is Lorentzian with signature $\{-1,+1,\ldots, +1\}$,
the rigging field $\ell$ can be chosen as time-like, whose trajectory thus corresponds to the worldline of an observer.
On the other hand, if $\gzero|_{\Sigma}$ is non-degenerate, then $\ell$ can be chosen as the unit normal vector field,
and Eqs. \eqref{hypersurface gauss}--\eqref{hypersurface codazzi 3} reduce to the usual Gauss-Codazzi equations in $\S \ref{subsec: GCR}$.

\bigskip
\noindent
{\bf Proof of Theorem {\rm \ref{theorem for hypersurface}}}.
The proof consists of three steps. We emphasize the difference between the case of general hypersurfaces
and the case of semi-Riemannian submanifolds (Theorem \ref{theorem_main theorem, isometric immersions and GCR}),
while the parallel arguments are only briefly sketched.

\medskip
\noindent
{\bf 1.} We first deduce the Gauss--Codazzi equations \eqref{hypersurface gauss}--\eqref{hypersurface codazzi 3}
from the immersion $\iota$.
As in \S 2, these equations are obtained by expressing the flatness of $\R^{n+1}$ (that is, the Riemann curvature $\widetilde{R}=0$)
with respect to the orthogonal splitting $T_x\R^{n+1} \cong T_x\Sigma \oplus {\rm span}\,(\ell_x)$.
Indeed, from the definition of the Riemann curvature, we have
\begin{align}\label{expression for bar-R}
\widetilde{R}(X,Y)Z =&\, R(X,Y)Z - K(X,\widetilde{\na}_YZ) - K(X,\widetilde{\na}_YZ)\ell + K(Y,\widetilde{\na}_XZ)\ell \nonumber \\
&\,-\widetilde{\na}_X\big(K(Y,Z)\ell\big) + \widetilde{\na}_Y\big(K(X,Z)\ell\big) + K([X,Y],Z)\ell.
\end{align}
The detailed computation can be found in \cite[\S 3]{mars},
with a slightly different sign convention for $\widetilde{R}$.
The Gauss equation is obtained by contracting with $\alpha$.
Since $\alpha(\ell)=0$, we have
\begin{align*}
0 = \alpha (\widetilde{R}(X,Y),Z)= \alpha(R(X,Y)Z) - \alpha(\widetilde{\na}_X(K(Y,Z)\ell)) + \alpha (\widetilde{\na}_Y(K(X,Z)\ell)),
\end{align*}
which yields \eqref{hypersurface gauss} by the definition of $\Psi$.

To obtain the Codazzi equation \eqref{hypersurface codazzi 1},
we consider
$$
\nn(R(X,Y)Z)=0,
$$
where $\nn$ is the normal $1$-form.
Invoking Eq. \eqref{expression for bar-R} for $\widetilde{R}$ again yields
\begin{align*}
\nn(\widetilde{R}(X,Y)Z) =&\, -K(X,{\na}_YZ) + K(Y,{\na}_XZ) - K([X,Y], Z) \nonumber\\
&\, - \nn (\widetilde{\na}_X(K(Y,Z)\ell)) + \nn(\widetilde{\na}_Y (K(X,Z)\ell)).
\end{align*}
On the other hand, the Leibniz rule of the connection gives us
\begin{align*}
\nn(\widetilde{\na}_Y(K(X,Z)\ell)) = YK(X,Z) \nn(\ell) - K(X,Z) \nn(\na_Y \ell),
\end{align*}
which, together with $\nn(\ell)=1$ and the definition of $\psi$,
implies Eq. \eqref{hypersurface codazzi 1}.

Next, we consider
$\widetilde{R}(X,Y)\ell := \widetilde{\na}_X\widetilde{\na}_Y \ell - \widetilde{\na}_Y\widetilde{\na}_X \ell + \widetilde{\na}_{[X,Y]}\ell$.
Notice that
\begin{align*}
\widetilde{\na}_X\widetilde{\na}_Y \ell
= \sum_{i=1}^n X\Psi(\theta^i, Y)e^i + X\psi(Y)\ell + \sum_{i=1}^n \Psi(\theta^i, Y) \widetilde{\na}_X e^i + \psi(Y)\widetilde{\na}_X \ell,
\end{align*}
where the following key identities are utilized:
\begin{equation*}
\widetilde{\na}_XY = \na_XY - K(X,Y)\ell,\quad
\widetilde{\na}_X\ell = \sum_{i=1}^n \Psi(\theta^i, X)e^i + \psi(X)\ell.
\end{equation*}
Then
\begin{align*}
\widetilde{\na}_X\widetilde{\na}_Y \ell
=&\, \sum_{i=1}^n X\Psi(\theta^i, Y)e^i + X\psi(Y)\ell + \sum_{i=1}^n \Psi(\theta^i, Y)\na_Xe^i \nonumber\\
&\,- \sum_{i=1}^n \Psi(\theta^i, Y) K(e^i, X)\ell + \sum_{i=1}^n \psi(Y)\psi(\theta^i, X)e^i + \psi(X)\psi(Y)\ell.
\end{align*}
A similar expression holds for $\widetilde{\na}_Y\widetilde{\na}_X\ell$ by interchanging $X$ and $Y$:
\begin{align*}
\widetilde{\na}_Y\widetilde{\na}_X \ell
= &\sum_{i=1}^n Y\Psi(\theta^i, X)e^i + Y\psi(X)\ell + \sum_{i=1}^n \Psi(\theta^i, X)\na_Ye^i \nonumber\\
&- \sum_{i=1}^n \Psi(\theta^i, X) K(e^i, Y)\ell + \sum_{i=1}^n \psi(X)\psi(\theta^i, Y)e^i + \psi(Y)\psi(X)\ell.
\end{align*}
Thus, contracting with $\alpha\in \Omega^1(\Sigma)$ and noting that $\alpha(\ell)=0$,
we conclude the Codazzi equation \eqref{hypersurface codazzi 2}.

Finally, the Codazzi equation \eqref{hypersurface codazzi 3}
is obtained by contracting $\widetilde{R}(X,Y)\ell$ with the normal $1$-form $\nn$.
Similarly to the above computations, we have
\begin{align*}
\nn(\widetilde{\na}_X\widetilde{\na}_Y \ell)
&=\nn (\widetilde{\na}_X (\sum_{i=1}^n \theta^i(\widetilde{\na}_Y \ell) e^i+\nn(\na_Y \ell)\ell))\\
&= X\psi(Y) + \sum_{i=1}^n\theta^i(\widetilde{\na}_Y \ell) \nn(\widetilde{\na}_X e^i) + \psi(Y)\psi(X)\\
&=  X\psi(Y) + \psi(Y)\psi(X) - \sum_{i=1} K(e^i, X)\Psi(\theta^i, Y),
\end{align*}
thanks to another important identity:
\begin{equation}\label{identity for K}
\nn(\widetilde{\na}_XY)=-\nn(K(Y,X)\ell) = -K(Y,X).
\end{equation}
Therefore, computing for $\nn(\widetilde{\na}_Y\widetilde{\na}_X\ell)$ in the similar manner:
\begin{align*}
\nn(\widetilde{\na}_Y\widetilde{\na}_X \ell)=Y\psi(X) + \psi(X)\psi(Y)-\sum_{i=1}K(e^i,Y)\Psi(\theta^i,X),
\end{align*}
we can deduce Eq. \eqref{hypersurface codazzi 3}.
Furthermore, observe that the above computations still hold  in the sense of distributions
for immersions with lower regularity, {\it i.e.},  $\iota\in W^{2,p}_{\loc}(\Sigma; \R^{n+1})$.
This proves the first part of the theorem.

\medskip
\noindent
{\bf 2.}  Now we tackle the {\it realization problem}, {\it i.e.}, finding
an immersion $\iota$ from
Eqs. \eqref{hypersurface gauss}--\eqref{hypersurface codazzi 3}.
As in the semi-Riemannian submanifolds case, the key is to verify
the second structural system \eqref{eqn_second structural eqn} for a suitable connection $1$-form.

For this purpose, we invoke the following identity for differential forms:
\begin{equation*}
\dd\beta (X,Y) = X\beta(Y)-Y\beta(X) + \beta([X,Y]),
\end{equation*}
where $\beta\in \Gamma(T^*\Sigma)$ is arbitrary.
Thus, we can  rewrite the three Codazzi equations as
\begin{equation}\label{rewritten gauss-codazzi}
\begin{cases}
\dd K(X,Y,Z) = K(Y, {\na}_XZ) - K(X,{\na}_YZ) + \psi(X) K(Y,Z) - K(X,Z)\psi(Y),\\[1mm]
\dd\Psi(\alpha, X,Y) = \psi(X)\Psi(\alpha, Y) - \psi(Y)\Psi(\alpha,X)
 \\[1mm]
\qquad\qquad\qquad\,\,\, + \sum_{i=1}^n \big(\Psi(\theta^i, X)\alpha(\na_Y e^i) - \Psi(\theta^i, Y)\alpha(\na_X e^i)\big),\\[1mm]
\dd\psi(X,Y) = \sum_{i=1}^n \big(\Psi(\theta^i, Y) K(X, e^i) - \Psi(\theta^i, X) K(Y, e^i)  \big).
\end{cases}
\end{equation}

Now, define the connection $1$-form $\W_\Sigma \in W^{1,p}_\loc(\Sigma;\,T^*\Sigma \otimes \mathfrak{gl}(n+1; \R))$ by
\begin{equation}\label{matrix}
\W_\Sigma := \begin{bmatrix}
\Gamma & \Psi \\
K& \psi
\end{bmatrix}.
\end{equation}
More precisely, in the local coordinates, we write
\begin{equation*}
\W_\Sigma := \begin{bmatrix}
\Gamma^k_{ij} \theta^k & \Psi(\theta^i, \cdot)\\[2mm]
K(e^i, \cdot) & \psi(\cdot)
\end{bmatrix},
\end{equation*}
where, as usual, the Christoffel symbols are defined via
$\na_{e^i}e^j = \sum_{k=1}^n \Gamma^k_{ij} e^k$ and  computed
from $\Gamma^k_{ij}=\frac{1}{2}g^{kl}(\partial_i g_{jl}+\partial_j g_{li} - \partial_l g_{ij})$.
The block-matrix representation of $\W$ in Eq. \eqref{matrix} is interpreted via the following identifications:
\begin{equation*}
\begin{cases}
\Gamma =\Gamma^k_{ij} \theta^k \in W^{1,p}_\loc(\Sigma\,;\,T^*\Sigma \otimes \mathfrak{gl}(n;\R)),\\[1mm]
\Psi, K \in W^{1,p}_{\loc}(\Sigma\,;\,T^*\Sigma \otimes \R^n),\\[1mm]
\psi \in W^{1,p}_\loc (\Sigma\,;\,T^*\Sigma).
\end{cases}
\end{equation*}
Thus, we can recast the Gauss equation \eqref{hypersurface gauss} and the Codazzi equations
in the form of \eqref{rewritten gauss-codazzi} into the following schematic equalities:
\begin{equation*}
\begin{cases}
\dd K = K\Gamma - \Gamma K - K\psi + \psi K = K\wedge \Gamma - K \wedge \psi,\\
\dd \Psi = \Gamma\Psi - \Psi \Gamma + \Psi \psi - \psi \Psi =  \Gamma \wedge \Psi - \Psi \wedge \psi,\\
\dd \psi = K\Psi - \Psi K = K\wedge \Psi,
\end{cases}
\end{equation*}
where the juxtaposition of matrices ({\it e.g.},\, $K\Gamma$) denotes the matrix multiplication,
and $\wedge$ is an intertwining of the wedge product on differential forms and the matrix multiplication.

On the other hand, simple manipulations on block matrices lead to
\begin{equation}\label{W Sigma wedge W Sigma}
\W_\Sigma\wedge \W_\Sigma = \begin{bmatrix}
\Gamma \wedge \Gamma  + \Psi \wedge K && \Gamma\wedge \Psi + \psi \wedge \Psi \\
K\wedge \Gamma + K\wedge \psi && K\wedge \Psi
\end{bmatrix}.
\end{equation}
In this notation, the Riemann curvature is given by
\begin{equation*}
R=\dd\Gamma - \Gamma \wedge \Gamma \in L^p_\loc(\Sigma\,;\,{\wedge}^2\, T^*\Sigma \otimes \mathfrak{gl}(n;\R)).
\end{equation*}
Then the preceding two equations yield
\begin{equation}\label{eq_sigma second structure}
\dd\W_\Sigma - \W_\Sigma\wedge \W_\Sigma =0,
\end{equation}
{\it i.e.},\, the second structural system
as in Eq. \eqref{eqn_second structural eqn}.

Invoking again Lemma \ref{lemma mardare},
we obtain the local solution $A\in W^{1,p}_\loc(U\subset \Sigma; \mathfrak{gl}(n+1,\R))$
to the following Pfaff system:
\begin{equation*}
\dd A = \W_\Sigma\cdot A,
\end{equation*}
where $U \subset \Sigma$ is an open trivialized neighborhood.

\medskip
\noindent
{\bf 3.} Now we solve the local isometric immersion $\iota: \Sigma \emb \R^{n+1}$ via the Poincar\'{e} system:
\begin{equation*}
\dd\iota = \tilde{\theta}\cdot A,
\end{equation*}
where $\tilde{\theta} = (\theta^1, \ldots, \theta^n, 0)^\top : U \subset \Sigma \rightarrow \R^{n+1}\otimes T^*\Sigma$
is the $\R^{n+1}$-valued differential $1$-form.
As before, it is solvable if and only if the following first structural system is satisfied:
$$
\dd\tilde{\theta} = \tilde{\theta}\wedge \W_\Sigma.
$$
Recall that the first structural system holds whenever the affine connection $\na =\iota^*\widetilde{\na}$ is torsion-free (see Appendix A.5).
Here, as $K(X,Y)=K(Y,X)$ ({\it cf.} \cite{lefloch1,mars}),
the torsion-free condition is verified, which leads to the existence of a solution $\iota \in W^{2,p}_\loc(U; \R^{n+1})$.

The assertion now follows from the proof of Theorem \ref{theorem_main theorem, isometric immersions and GCR}. This completes the proof.

\begin{remark}
$\,$ Theorem {\rm \ref{theorem for hypersurface}} was proved locally in \cite{lefloch2} by computations in the local coordinates.
Our proof above, being global and intrinsic in nature,
both helps clarify the geometric meanings of $\{K, \Psi, \psi\}$ and serves as a crucial step towards the establishment of
the weak rigidity theorem, Theorem {\rm \ref{theorem_weak rigidity of hypersurfaces}}, for general hypersurfaces below.
\end{remark}

In the proof above, it is crucial to establish the equivalence of the Gauss--Codazzi equations
\eqref{hypersurface gauss}--\eqref{hypersurface codazzi 3}
with Eq. \eqref{eq_sigma second structure}, namely the second structural system for $\W_\Sigma$,
which is defined in Eq. \eqref{matrix} in terms of the Christoffel symbol $\Gamma$ and the intrinsic,
extrinsic, and normal second fundamental forms $\{K, \Psi, \psi\}$.
Therefore, by invoking the quadratic theorem (Theorem \ref{thm: generalized quadratic theorem on manifolds})
and establishing the weak continuity of $\dd\W_\Sigma = \W_\Sigma \wedge \W_\Sigma$ again,
we arrive at the weak rigidity theorem for the general hypersurfaces:

\begin{theorem}\label{theorem_weak rigidity of hypersurfaces}
Let $(\Sigma,g)$ be a simply-connected $n$-dimensional hypersurface of semi-Euclidean
space $\R^{n+1}$ with $\ind(\Sigma)=\nu$ and $g\in W^{1,p}_\loc (\Sigma,  O(\nu, n-\nu))$ for $p>n$.
Let $\{f_\varepsilon\}$ be a family of immersions of semi-Riemannian submanifolds uniformly bounded
in $W^{2,p}_\loc(\Sigma; \R^{n+k})$, and let $\{l_\varepsilon\}$ be an associated family of rigging vector fields
uniformly bounded in $W^{1,p}_{\loc}(\Sigma; T\Sigma)$.
Denote by $\{K_\varepsilon, \Psi_\varepsilon, \psi_\varepsilon\}$ the corresponding intrinsic, extrinsic,
and normal second fundamental forms.
Then, after passing to a subsequence if necessary, $\{f_\varepsilon\}$ converges to
an immersion $f \in W^{2,p}_\loc(\Sigma;\R^{n+1})$ in the sense of distributions{\rm ;} in addition,
its intrinsic, extrinsic, and normal second fundamental forms are weak
limits in $L^p_\loc$ of $\{K_\varepsilon\}$, $\{\Psi_\varepsilon\}$, and $\{\psi_\varepsilon\}$, respectively.
\end{theorem}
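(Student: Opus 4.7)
The plan is to mirror the proof of Theorem \ref{theorem_weak rigidity}, substituting Theorem \ref{theorem for hypersurface} (the realization theorem for general hypersurfaces) for Theorem \ref{theorem_main theorem, isometric immersions and GCR}. First, by Banach--Alaoglu and the uniform $W^{2,p}_\loc$ and $W^{1,p}_\loc$ bounds, I would extract a subsequence (still indexed by $\varepsilon$) along which $f_\varepsilon \weak f$ weakly in $W^{2,p}_\loc(\Sigma;\R^{n+1})$ and $l_\varepsilon \weak l$ weakly in $W^{1,p}_\loc(\Sigma;T\Sigma)$. Since $p > n$, the Rellich--Kondrachov compact embeddings $W^{2,p}_\loc \emb C^1_\loc$ and $W^{1,p}_\loc \emb C^0_\loc$ upgrade these to $f_\varepsilon \to f$ in $C^1_\loc$ and $l_\varepsilon \to l$ in $C^0_\loc$. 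In particular, $f$ is an immersion with $f^\ast \gzero = g$ pointwise a.e., and the rigging condition $\nn(l)=1$ transfers to the limit. After a further extraction, also $K_\varepsilon \weak K$, $\Psi_\varepsilon \weak \Psi$, and $\psi_\varepsilon \weak \psi$ weakly in $L^p_\loc$.

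Next, I would assemble for each $\varepsilon$ the block-matrix connection $1$-form $\W_\Sigma^\varepsilon$ defined by \eqref{matrix}, with the $\Gamma$-block furnished by the Christoffel symbols of $g_\varepsilon := f_\varepsilon^\ast \gzero$ and the remaining entries by $K_\varepsilon, \Psi_\varepsilon, \psi_\varepsilon$. The sequence $\{\W_\Sigma^\varepsilon\}$ is uniformly bounded in $L^p_\loc(\Sigma; T^\ast\Sigma \otimes \mathfrak{gl}(n+1;\R))$, since $g_\varepsilon \to g$ strongly in $W^{1,p}_\loc$ (a consequence of the strong $C^1_\loc$ convergence combined with the uniform $W^{2,p}_\loc$ bound on $\{f_\varepsilon\}$). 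By the first half of Theorem \ref{theorem for hypersurface}, each $\W_\Sigma^\varepsilon$ satisfies the second structural system \eqref{eq_sigma second structure} in the distributional sense. I would then apply the weak continuity argument of Theorem \ref{thm: weak continuity of Cartan's structural eq}---which ultimately rests on Theorem \ref{thm: generalized quadratic theorem on manifolds-b}---to pass to the limit in the quadratic term $\W_\Sigma^\varepsilon \wedge \W_\Sigma^\varepsilon$. The proof of that theorem transfers without change when the coefficient Lie subalgebra is enlarged from $\littleonk$ to $\mathfrak{gl}(n+1;\R)$, because the compensated compactness acts only on the $T^\ast\Sigma$ factor and is blind to the particular matrix subalgebra beyond the bilinearity of matrix multiplication. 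This yields $\W_\Sigma^\varepsilon \weak \W_\Sigma$ in $L^p_\loc$ with $d\W_\Sigma = \W_\Sigma \wedge \W_\Sigma$, equivalently the Gauss--Codazzi system \eqref{hypersurface gauss}--\eqref{hypersurface codazzi 3} for the limits $(K, \Psi, \psi)$.

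Finally, since $\Sigma$ is simply-connected and $p > n$, the realization part of Theorem \ref{theorem for hypersurface} produces an immersion $\tilde f \in W^{2,p}_\loc(\Sigma; \R^{n+1})$ whose intrinsic, extrinsic, and normal second fundamental forms are exactly $K, \Psi, \psi$. Fixing the initial data of the underlying Pfaff--Poincar\'e system (Lemma \ref{lemma mardare}) to match $(f,l)$ at a base point $x_0 \in \Sigma$, the uniqueness clause of that lemma combined with the monodromy argument from the proof of Theorem \ref{theorem_main theorem, isometric immersions and GCR} identifies $\tilde f$ with $f$ throughout $\Sigma$; the weak $W^{2,p}_\loc$ limit $f$ is therefore an immersion whose second fundamental forms coincide with the weak $L^p_\loc$ limits $K, \Psi, \psi$, as claimed. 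The principal obstacle is the extension of Theorem \ref{thm: weak continuity of Cartan's structural eq} to $\mathfrak{gl}$-valued connection forms: while the algebraic content of the compensated compactness argument carries through verbatim, one must verify that the asymmetric block structure of $\W_\Sigma^\varepsilon$ preserves the critical div-curl pairing $\langle \W \wedge \varphi, \star \W\rangle$ used in the proof of Theorem \ref{thm: weak continuity of Cartan's structural eq}, and check that the uniform $L^p_\loc$ control on the Christoffel symbols of $g_\varepsilon$ does not degenerate---this relies crucially on the strong $C^1_\loc$ convergence of $f_\varepsilon$, which in turn is available only because $p > n$.
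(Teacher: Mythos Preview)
Your proposal follows essentially the same route as the paper: assemble the $\mathfrak{gl}(n+1;\R)$-valued form $\W_\Sigma^\varepsilon$ from \eqref{matrix}, verify the $H^{-1}_\loc$ pre-compactness needed to run the argument of Theorem~\ref{thm: weak continuity of Cartan's structural eq} on the structural system \eqref{eq_sigma second structure}, and then invoke the realization half of Theorem~\ref{theorem for hypersurface}. The paper is terser---it does not separately extract the direct weak limit $f$ and then identify it with the realization-theorem output via the uniqueness in Lemma~\ref{lemma mardare}, nor does it comment on the passage from $\mathfrak{o}$ to $\mathfrak{gl}$---but the substance is the same, and your explicit remarks on both points are welcome additions.

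One correction is needed, however. You define the $\Gamma$-block of $\W_\Sigma^\varepsilon$ as the Christoffel symbols of $g_\varepsilon := f_\varepsilon^\ast\gzero$, but the entire point of \S\ref{sec: further app}.3 is that this pulled-back tensor may degenerate, so $g_\varepsilon^{kl}$ need not exist and the usual formula $\Gamma^k_{ij}=\tfrac12 g_\varepsilon^{kl}(\ldots)$ is unavailable. The correct object is the \emph{rigged} connection $\na^\varepsilon$ determined by the splitting $T\R^{n+1}\cong T\Sigma \oplus \mathrm{span}(\ell_\varepsilon)$ via $\widetilde{\na}_X Y = \na^\varepsilon_X Y - K_\varepsilon(X,Y)\ell_\varepsilon$; this depends on the pair $(f_\varepsilon,\ell_\varepsilon)$, is well-defined regardless of degeneracy, and inherits the required $L^p_\loc$ bound from the $W^{2,p}_\loc$ and $W^{1,p}_\loc$ controls on $f_\varepsilon$ and $\ell_\varepsilon$. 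With this adjustment the rest of your argument goes through unchanged.
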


\begin{proof}$\,$
First, thanks to Eq. \eqref{W Sigma wedge W Sigma},
all the entries of the $2$--form-valued matrix $\W^\Sigma \wedge \W^\Sigma$ are linear combinations
of the quadratic forms in $\Gamma, \Psi, K$, and $\psi$,
each of which lies in $W^{1,p}_\loc$.
Thus, $\W^\Sigma \wedge \W^\Sigma \in W^{1, \frac{p}{2}}_\loc$ by the Cauchy--Schwarz inequality,
which is compactly embedded in $W^{-1, q}_\loc$ for some $1<q<2$,
as computed in Step $3$ of the proof of Theorem \ref{theorem_weak rigidity}.
	
On the other hand, $\W^\Sigma\in W^{1,p}_\loc$ implies that $\dd\W^\Sigma \in L^p_\loc$,
which is compactly embedded into $W^{-1,p}_\loc$  by the Rellich lemma.
Using Eq. \eqref{W Sigma wedge W Sigma} and the interpolations of Sobolev spaces,
we deduce that $\{\dd\W^\Sigma_\varepsilon\}$ is pre-compact in $H^{-1}_{\loc}$.
	
Therefore, with the above pre-compactness result, the proof proceeds
as that for Theorem \ref{theorem_weak rigidity}.
In particular, we establish the weak continuity of the Cartan structural system $\dd\W^\Sigma = \W^\Sigma \wedge \W^\Sigma$.
Then, in view of the realization theorem (Theorem \ref{theorem for hypersurface}) for general hypersurfaces,
it implies the existence of the limiting immersion $f$,
together with a rigging vector field $\ell$, for which the intrinsic, extrinsic, and normal second fundamental
forms $\{K, \Psi, \psi\}$ are well-defined. After passing to a subsequence if necessary,
$\{K_\varepsilon, \Psi_\varepsilon, \psi_\varepsilon\}$ converges in the weak $L^p_\loc$ topology to $\{K, \Psi, \psi\}$ due to the uniqueness
of weak limits. Then the proof is completed.
\end{proof}

\medskip
\begin{appendices}
\section{Proofs of Several Semi-Riemannian Geometric Theorems}
\numberwithin{equation}{section}

In this appendix, we provide the proofs of several semi-Riemannian geometric theorems,
whose Riemannian analogues are well-known.
These results are viewed as {\it folklores} in the geometric community,
but the proofs appear elusive in the literature.
For the convenience of the reader, now we carefully write down the complete proofs in detail below.

\subsection{\bf Proof of Theorem~\ref{theorem GCR equations}}
The derivation of Eqs.  \eqref{gauss}--\eqref{codazzi} can be found
on page \,$100$ and page \,$115$ in \cite{oneill}, respectively.
It remains to derive the Ricci equation \eqref{ricci}.
Indeed, we have
\begin{align*}
0&= \widetilde{R}(X,Y,\xi,\eta)\\
&= \langle\widetilde{\na}_X\widetilde{\na}_Y\xi,\eta\rangle - \langle\widetilde{\na}_Y\widetilde{\na}_X\xi,\eta\rangle + \langle\widetilde{\na}_{[X,Y]}\xi,\eta\rangle \\
&= \langle\np_X\np_Y\xi,\eta\rangle - \langle\np_X(S_\xi Y),\eta\rangle - \langle\np_Y\np_X\xi,\eta\rangle+\langle \np_Y(S_\xi X),\eta\rangle
+ \langle \np_{[X,Y]}\xi,\eta\rangle\\
&=R^\perp(X,Y,\xi,\eta) - \langle\np_X(S_\xi Y),\eta\rangle +\langle \np_Y(S_\xi X),\eta\rangle,
\end{align*}
in view of the definition for $R^\perp$.
Moreover, owing to the self-adjointness of $S_{\eta}$,
we have
\begin{equation*}
\langle\np_X(S_\xi Y),\eta\rangle =X\langle S_\xi Y,\eta\rangle - \langle S_\xi Y, \ta (\widetilde{\na}_X\eta)\rangle
=  \langle S_\xi Y, S_\eta X\rangle = \langle S_\eta\circ S_\xi(Y), X\rangle,
\end{equation*}
and similarly $\langle\np_Y(S_\xi X),\eta\rangle =\langle S_\xi\circ S_\eta(Y), X\rangle$.
Then Eq. \eqref{ricci} follows.

\subsection{Proof of Lemma~\ref{lemma_skew symmetry of W}}
The connection $1$-form $\W$ (Definition \ref{def of W}) is semi-skew-symmetric.
It is crucial to observe that a matrix $B\in O(\nu, n-\nu)$ if and only if its transpose $B^\top$ takes the form:
\begin{equation}\label{eqn_semi orthogonal group}
B^\top =
{\e_{n,\nu}} B^{-1}
{\e_{n,\nu}}.
\end{equation}
The signature matrix
${\e_{n,\nu}}$ is defined in Eq. \eqref{signature matrix}.

We first observe that, for each $Z \in O(\nu, n-\nu)$,
\begin{equation}\label{eqn tangent space of special orthogonal group}
	T_Z O(\nu, n-\nu) = \big\{A\in \mathfrak{gl}(n;\R)\,:\, Z
{\e_{n,\nu}} A^\top + A
{\e_{n,\nu}} Z^\top=0\big\}.
\end{equation}
Indeed, let $\sigma:(-\varepsilon,\varepsilon)\rightarrow \onu$ be a $C^1$-curve
with $\sigma(0)=Z$.
In view of Eq. \eqref{eqn_semi orthogonal group}, we have
$$
\sigma(t)^\top =
{\e_{n,\nu}} \sigma(t)^{-1}
{\e_{n,\nu}}.
$$
Taking the derivative in $t$ yields
\begin{equation*}
\dot\sigma(t)^\top = -
{\e_{n,\nu}} \sigma(t)^{-1} \dot{\sigma}(t) \sigma(t)^{-1}
{\e_{n,\nu}}.
\end{equation*}
Thus, evaluating the above at $t=0$ and using the identity:
$Z^\top=
{\e_{n,\nu}} Z^{-1}
{\e_{n,\nu}}$,
we have
\begin{equation*}
\dot{\sigma}(0)^\top = -Z^\top
{\e_{n,\nu}} \dot{\sigma}(0)
{\e_{n,\nu}} Z^\top.
\end{equation*}
Since the elements of $T_Z\onu$ are in one-to-one correspondence with $\dot{\sigma}(0)$ for such $\sigma$,
Eq. \eqref{eqn tangent space of special orthogonal group} follows. As a consequence,
\begin{equation*}
\frako(\nu,n-\nu) := T_{\id}\onu = \big\{A\in \mathfrak{gl}(n;\R)\,:\,
{\e_{n,\nu}} A^\top + A
{\e_{n,\nu}} =0\big\}.
\end{equation*}

We now verify that $\W$ lies in the Lie algebra of the semi-orthogonal group.
Clearly, it suffices to prove that, for each $a,b\in \{1,\ldots,n+k\}$,
\begin{equation*}
\epsilon^a \omega_a^b = -\epsilon^b \omega_b^a.
\end{equation*}
Indeed, for $a=i$ and $b=\alpha$, this follows by the definition of $\omega^\alpha_i$
in the fourth equation of \eqref{eqn-connection one form}.
For $a=i$ and $b=j$, as $\na$ is compatible with metric $g$,
we deduce from the first equation  of \eqref{eqn-connection one form} that
\begin{align*}
0=\p_l\langle \p_i,\p_j\rangle = \langle \na_{\p_l}\p_i,\p_j\rangle + \langle \p_i, \na_{\p_l}\p_j\rangle
= \epsilon^j \omega^i_j(\p_l) + \epsilon^i \omega^j_i(\p_l).
\end{align*}
Finally, for $a=\alpha$ and $b=\beta$, it follows from a similar computation by using the third equation
of \eqref{eqn-connection one form}, thanks to the compatibility of $\na^E$ with $g^E$.
This completes the proof.

\subsection{\, \bf Proof of Proposition~\ref{prop_second structural equation}}
We divide the arguments into four steps.

\medskip
\noindent
{\bf 1.} We begin by observing that the definition of the connection $1$-form $\W$, {\it i.e.}, Eq. \eqref{eqn-connection one form},
implies that
\begin{equation*}
\na_{\p_i}\p_j = \sum_{l}\omega^l_j(\p_i)\p_l,
\quad \two(\p_i,\p_j) = \sum_\alpha \omega^i_\alpha (\p_j) \p_\alpha,
\quad \na^E_{\p_i}{\p_\alpha} = \sum_\beta \omega^\alpha_\beta (\p_i)\p_\beta.
\end{equation*}
One may deduce the following identities of the shape operator $S$:
\begin{equation*}
S_{\p_i}\p_\alpha = \sum_j \epsilon^j \langle S_{\p_i}\p_\alpha,\p_j\rangle\p_j
= \sum_j \epsilon^j \langle \two (\p_i,\p_j),\p_\alpha\rangle \p_j =  \sum_j \epsilon^j \epsilon^\alpha\omega^i_\alpha(\p_j)\p_j.
\end{equation*}

\smallskip
\noindent
{\bf 2.} Next, the Gauss equation \eqref{gauss} is equivalent to
\begin{equation}\label{z}
R(\p_i, \p_j, \p_k) = S_{\p_i} \two(\p_j,\p_k) - S_{\p_j}\two (\p_i,\p_k).
\end{equation}
Applying the symmetry of $\two$ twice (in the first and third equalities below), we obtain
\begin{align*}
S_{\p_i} \two(\p_j,\p_k) - S_{\p_j}\two (\p_i,\p_k) &= S_{\p_i} \two(\p_k,\p_j) - S_{\p_j}\two (\p_k,\p_i)\\
&= S_{\p_i}\big(\sum_\alpha \omega^k_\alpha (\p_j) \p_\alpha\big) - S_{\p_j} \big(\sum_\alpha \omega^k_\alpha (\p_i)\p_\alpha\big)\\
&=\sum_\alpha\sum_l \epsilon^\alpha\epsilon^l \big(\omega_\alpha^k(\p_j)\omega^l_\alpha(\p_i) - \omega^k_\alpha(\p_i)\omega^l_\alpha(\p_j) \big)\p_l\\
&= \sum_\alpha\sum_l (\omega^k_\alpha\wedge \omega^\alpha_l) (\p_i,\p_j) \p_l,
\end{align*}
where the last equality follows from Lemma \ref{lemma_skew symmetry of W}.
On the other hand, the Riemann curvature of the Levi--Civita connection on $TM$ is computed directly
from the definition:
\begin{align*}
R(\p_i,\p_j,\p_k)
&:= \na_i\na_j\p_k - \na_j\na_i\p_k + \na_{[\p_i,\p_j]}\p_k \\
&= \sum_{l}\big\{\p_i(\omega^l_k(\p_j))\p_l - \p_j(\omega^l_k(\p_i))\p_l + \omega^l_k([\p_i,\p_j])\p_l\\
&\qquad\quad\, + \omega^l_k(\p_j)\sum_s\omega^s_l(\p_i)\p_s - \omega^l_k(\p_i)\sum_s \omega^s_l(\p_j)\p_s\big\}\\
&= \sum_s \big\{\dd\omega^s_k - \sum_l\omega^l_k\wedge\omega^s_l\big\}(\p_i,\p_j) \p_s.
\end{align*}
Equating the preceding  computations via Eq. \eqref{z}, we conclude that
$$
\dd\omega^s_k =\sum_b \omega^k_b\wedge \omega^b_s.
$$

\medskip
\noindent
{\bf 3.}
Applying the same argument to $R^E(\p_i,\p_j,\p_\gamma)$ and utilizing the Ricci equation \eqref{ricci},
we deduce that $\dd\omega^\alpha_\beta = \sum_b \omega^\alpha_b\wedge \omega^b_\beta$.

Furthermore, starting with the Codazzi equations \eqref{codazzi}, we have
\begin{align}\label{eqn zz}
0=&\,\widetilde{\na}_{\p_i} \two(\p_j,\p_k) - \widetilde{\na}_{\p_j} \two(\p_i,\p_k)\nonumber\\
=&\, \sum_\gamma \p_i(\omega^j_\gamma(\p_k))\p_\gamma + \sum_\beta\omega^j_\beta(\p_k) \sum_\gamma \omega^\beta_\gamma(\p_i) \p_\gamma
  - \sum_\gamma \p_j(\omega^i_\gamma(\p_k))\p_\gamma \nonumber\\
  &\, -\sum_\beta\omega^i_\beta(\p_k) \sum_\gamma \omega^\beta_\gamma(\p_j) \p_\gamma \nonumber\\
=&\, \sum_\gamma\Big\{\p_i(\omega^k_\gamma(\p_j))-\p_j(\omega^k_\gamma(\p_i))
-\sum_\beta\big(\omega^k_\beta(\p_i)\omega^\beta_\gamma(\p_j) - \omega^k_\beta(\p_j) \omega^\beta_\gamma(\p_i)\big)\Big\}\p_\gamma\nonumber\\
=&\, \sum_\gamma\Big\{\dd\omega^k_\gamma(\p_i,\p_j) - \omega^k_\gamma [\p_i,\p_j] - \sum_\beta (\omega^k_\beta\wedge\omega^\beta_\gamma) (\p_i,\p_j)  \Big\}\p_\gamma.
\end{align}
In the penultimate equality, we have used the self-adjointness of $\two$, {\it i.e.}, $\omega^i_\alpha(\p_j)=\omega^j_\alpha(\p_i)$.
The final equality follows from the definition of $\dd\omega^k_\gamma$ and $\omega^k_\beta\wedge\omega^\beta_\gamma$.

To compute the Lie bracket term in the last equality of Eq. \eqref{eqn zz},
we invoke the torsion-free condition of the affine connection:
\begin{align*}
\sum_\gamma \omega^k_\gamma [\p_i,\p_j] \p_\gamma
&=\sum_\gamma \omega^k_\gamma \big(\na_{\p_i}\p_j - \na_{\p_j}\p_i\big)\p_\gamma
\\&
= \sum_\gamma\sum_l \omega^k_\gamma \big(\omega^l_j(\p_i) \p_l -\omega^l_i(\p_j) \p_l \big)\p_\gamma\\
&= \sum_\gamma\sum_l  \big(\omega^l_j(\p_i)\omega^l_\gamma(\p_k) - \omega^l_i(\p_j)\omega^l_\gamma(\p_k)\big) \p_\gamma
\\&
= \sum_\gamma\sum_l (\omega_l^k\wedge \omega_\gamma^l)(\p_i,\p_j) \p_\gamma,
\end{align*}
again owing to the symmetries of the second fundamental form and $\W\wedge \W$.
Substituting it back to Eq. \eqref{eqn zz} yields that $\dd\omega^k_\gamma = \sum_b \omega^k_b \wedge \omega^b_\gamma$.

\medskip
\noindent
{\bf 4.} Combining Steps $1$--$3$ together, we conclude
\begin{equation*}
\dd\omega^a_c =\sum_b \omega^a_b \wedge \omega_c^b.
\end{equation*}
Moreover, as an equation on $\Omega^2\big(\littleonk\big)$,
Eq. \eqref{eqn_second structural eqn} is independent of the choice of moving frames.
This completes the proof.

\subsection{\, \bf Derivation of the First Structural System \eqref{eqn_first structure eq}}

We now present a derivation of the first structural system \eqref{eqn_first structure eq} and
show that it is equivalent to the torsion-free condition of the affine connection.

We compute the Lie bracket of the basic vector fields $\p_i$ and $\p_j$ in two different ways.
On one hand, we have
\begin{equation*}
[\p_i,\p_j] = \sum_l \epsilon^l \theta^l[\p_i,\p_j] \p_l.
\end{equation*}
On the other hand, the torsion-free condition of $\na$ gives
\begin{align*}
[\p_i,\p_j] &= \na_{\p_i}\p_j - \na_{\p_j}\p_i \\
&= \na_{\p_i}\big(\epsilon^j \sum_k \theta^k(\p_j)\p_k\big) - \na_{\p_j}\big(\epsilon^i \sum_k\theta^k(\p_i)\p_k\big)\\
&= \sum_k \big( \epsilon^k \delta^k_j \na_{\p_i}\p_k - \epsilon^k\delta^k_i \na_{\p_j}\p_k\big)\\
&= \sum_l\epsilon^l\big(\omega^i_l(\p_j) - \omega^j_l(\p_i) \big)\partial_l,
\end{align*}
where the last equality follows from the {\it semi-skew-symmetry} of the connection $1$-form
$\W$ (Lemma \ref{lemma_skew symmetry of W}). Then
\begin{equation}\label{eqn_x}
\theta^l[\p_i,\p_j] = \omega^i_l(\p_j) - \omega^j_l(\p_i).
\end{equation}
Now we observe
\begin{equation*}
\dd\theta^l (\p_i,\p_j) = \p_i(\theta^l(\p_j)) - \p_j(\theta^l(\p_i)) + \theta^l[\p_i,\p_j]=\theta^l[\p_i,\p_j],
\end{equation*}
and
\begin{align*}
\sum_k (\theta^k\wedge \omega^l_k )(\p_i,\p_j)
&= \sum_k\big(\theta^k(\p_i) \omega^l_k(\p_j) -\theta^k(\p_j)\omega^l_k(\p_i) \big)\\
&=  \sum_k \delta^k_i \omega^l_k(\p_j) - \delta^k_j \omega^l_k(\p_i) =\omega^l_i(\p_j) - \omega^l_j(\p_i).
\end{align*}
Utilizing Eq. \eqref{eqn_x}, we obtain
$$
\dd\theta^l = \sum_k \theta^k\wedge \omega^l_k.
$$
As Eq.  \eqref{eqn_first structure eq} is independent of the choice of local coordinates,
This completes the proof.

\subsection{\,\bf Proof of Theorem~\ref{theorem_main theorem, isometric immersions and GCR} in the $C^\infty$ Case}
We now present a proof of the realization theorem in the $C^\infty$ case,
following Tenenblat's arguments in \cite{Ten71} for the Riemannian case.
We emphasize that various modifications are necessary due to the semi-Riemannian geometry.
We divide the arguments into four steps.

\medskip
{\bf 1.}  We start with solving a {\it Pfaff system} for the bundle connection $A$ on $TM\oplus E$.
More precisely, we show that, for any $x_0\in M$,
the following initial value problem for first-order PDEs:
\begin{equation}\label{eqn pfaff system}
\W= \dd A\cdot A^{-1},\qquad A(0)=A(x_0)\in \bigonk,
\end{equation}
has a solution $A\in C^\infty(U;\bigonk)$
in some neighborhood $U$ of $x_0$.

Indeed, without loss of generality, assume that $x_0=0$ in the local coordinate $\{\p_i\}_1^n$.
Also, take $Z=\{Z^a_b\}$ as the canonical frame field on $\glnk\cong\R^{(n+k)^2}$,
with signature inherited from $\widetilde{M}=\R^{n+k}_{\nu+\tau}$.
For example, $Z:=\gzero$ is one suitable choice.
Motivated by \cite{Ten71},  we consider the following map:
\begin{align*}
\lxz: T_xM \times T_Z\bigonk &\longrightarrow T_Z\bigonk, \\
(X,\mm) \quad&\longmapsto \quad \dd_xZ(\mm) + Z\cdot \W(X)|_{x},
\end{align*}
which is abbreviated in the sequel as
\begin{equation}\label{eqn definition of Lambda}
\lxz = \dd Z-\W\cdot Z.
\end{equation}
Using the characterization of tangent spaces of the semi-orthogonal group and
its Lie algebra ({\it cf.} the proof of Lemma \ref{lemma_skew symmetry of W}),
we see that $\lxz$ is well-defined. Indeed,
\begin{align*}
&Z\gzero (\lxz(X,\mm))^\top + \lxz(X,\mm) \gzero  Z^\top\\
&= Z\gzero  ( \dd Z(\mm)^\top - Z^\top \W(X)^\top) +\big(\dd Z(\mm) - \W(X) Z\big)\gzero  Z^\top \\
&= -\big(\gzero \W(X)^\top +  \W(X)\gzero \big)+ \big(Z\gzero \dd Z(\mm)^\top
  + \dd Z(\mm)\gzero  Z^\top\big) \\
&= 0,
\end{align*}
since
$$
\dd Z(\mm)\in T_Z\bigonk, \quad \W(X)\in \littleonk.
$$

Next, we define the following {\it distribution} in the Frobenius sense:
\begin{equation*}
\dxz := \ker (\lxz) \subset \, T_xM\times T_Z\bigonk.
\end{equation*}
Our goal is to show that it is {\em completely integrable}.
Assuming so, we can find the unique maximal integral submanifold
in some neighborhood of $x_0$. Notice that
\begin{equation*}
\mathcal{D}^{(0,Z)} \cap \big(\{0\}\oplus T_Z\bigonk\big) =\{0\},
\end{equation*}
{\it i.e.}, the distribution is transverse to the $TM$ factor at point $x_0 = 0$,
because
\begin{equation*}
\lxz (0,\mm) = \mm.
\end{equation*}
In view of the classical implicit function theorem,
$\dxz$ is locally a graph of a smooth function $A$ from $TU$ to $T\bigonk$,
with $x$ lies in $U$,
an open neighborhood of $x_0$.
This function $A$ solves the Pfaff system \eqref{eqn pfaff system} in view of the definition of $\lxz$.
	
\medskip
\noindent
{\bf 2.} It now remains to prove the complete integrability of distribution $\dxz$.
By the {\em Frobenius theorem}, we show that $\dxz$ is {\em involutive}.
That is, for any $(X_i,\mm_i) \in \dxz$ for $i=1,2$, the commutator stays
in $\dxz$:
$$
\lxz [(X_1,\mm_1), (X_2, \mm_2)] =0.
$$

Indeed, utilizing the following identity for the exterior differential:
\begin{equation*}
\dd\lxz (s_1,s_2) = s_1 (\lxz  s_2) -s_2 (\lxz s_1)  - \lxz [s_1,s_2]
\end{equation*}
for $s_1,s_2\in T_xU \oplus T_Z\bigonk$, we reduce the problem to proving the identity:
\begin{equation}\label{step 3 in main theorem}
\dd\lxz((X_1,\mm_1),(X_2,\mm_2))=0.
\end{equation}
To this end, we compute $\dd\lxz$.
Since
$$
\Lambda=\dd Z-\W\cdot Z,
$$
we have
\begin{align*}
\dd\Lambda &= \dd(\dd Z) -\dd\W\cdot Z + \W\wedge (\dd Z)\\
&=- \W\wedge \W \cdot Z + \W \wedge (\Lambda + \W\cdot Z) =\W\wedge \Lambda,
\end{align*}
where we have used the second structural
system \eqref{eqn_second structural eqn},
together with the definition of $\Lambda$ in Eq. \eqref{eqn definition of Lambda},
for the second equality.
As $(X_i,\mm_i)\in\dxz$ for $i=1,2$, we then have
\begin{align*}
&\dd\lxz ((X_1,\mm_1),(X_2,\mm_2))\\
&= (\W|_x\wedge \lxz)((X_1,\mm_1),(X_2,\mm_2))\\
&= \W(X_1,\mm_1)|_x \lxz(X_2,\mm_2)  -\W(X_2,\mm_2)|_x \,\lxz(X_1,\mm_1)\\
&= 0.
\end{align*}
This completes the proof of Eq. \eqref{step 3 in main theorem},
which implies that the Pfaff system \eqref{eqn pfaff system} is solvable.

\medskip
\noindent
{\bf 3.}
Now, define
\begin{equation*}
\tth=(\theta^1,\ldots, \theta^n, 0,\ldots,0)^\top\in \Omega^1(\R^{n+k})
\end{equation*}	
and, for $x_0\in M$, consider the {\em Poincar\'{e}} system:
\begin{equation}\label{eqn_poincare}
\dd f =  \tth \cdot A, \qquad f(x_0)=f_0,
\end{equation}
where $f_0\in C^\infty(M;\widetilde{M})$ and
$\dd_{x_0}f_0(v)\neq 0$ for $v\in T_{x_0}M\setminus\{0\}$.
Suppose that this system is solvable. Then, as $A$ takes values in $\bigonk$,
$\det(A)=\pm 1$, by using  Eq. \eqref{eqn_semi orthogonal group}.
In particular, $A$ is invertible. It follows from Eq. \eqref{eqn_poincare}
that the linear map $\dd f$ has rank $n$, so that $f$ is an immersion indeed.

Solving for $f$ from Eq. \eqref{eqn_poincare} is equivalent to showing that $ \tth \cdot A$ is
an exact $1$-form.
For  simply-connected $M$, by the {\em Poincar\'{e} lemma}, it suffices to
verify that $\dd( \tth \cdot A)=0$; that is, it is a closed $1$-form. Indeed,
\begin{equation*}
\dd( \tth \cdot A)=\dd\tth \cdot A - \tth\wedge \dd A,
\end{equation*}
and we can compute the second term by $\tth\wedge \dd A = (\tth \wedge \W) \cdot A$,
thanks to the Pfaff system \eqref{eqn pfaff system} solved in Steps 1--2 above.
Thus, the exactness of $\tth\cdot A$ follows directly from
\begin{equation*}
\dd\tth = \tth\wedge \W,
\end{equation*}
which is just the first structural system \eqref{eqn_first structure eq}.
Thus, we have established the solvability of the initial value problem
for the Poincar\'{e} system \eqref{eqn_poincare}.

\medskip
\noindent
{\bf 4.}
With
the immersion $f$ from the Poincar\'{e} system,
we now identify the normal bundle  $TM^\perp:={f^*T\widetilde{M}}/{TM}$ with the given bundle $E$,
and identify the second fundamental form induced by $f$ with the given symmetric tensor field $\two$.
Moreover, we can deduce the uniqueness of the local immersion up to the rigid motions of $\R^{n+k}_{\nu+\tau}$, {\it i.e.},
modulo the actions by the semi-Riemannian congruence group $\R^{n+k}\rtimes \bigonk$.

\medskip
{\bf 4(a)}. First of all, define an orthonormal frame $\{\pb_a\}_1^{n+k}$ on $T\widetilde{M}$ via maps $f$ and $A$
solved by the Pfaff and Poincar\'{e} systems.
We denote by $\big\{\frac{\partial}{\partial Z^a}\big\}_1^{n+k}$ the canonical orthonormal basis
on $\widetilde{M}=\R^{n+k}_{\nu+\tau}$ with respect to $\gzero=
{\e_{n\,\nu}}\oplus
{\e_{k\,\tau}}$.
In this basis, we set
\begin{equation}\label{eqn define the barred frame on M-bar}
\pb_i:=\dd f(\p_i),\qquad
\pb_\alpha :=\sum_b A^\alpha_b\frac{\partial}{\partial Z^b},
\end{equation}
where the definition of $\{\pb_a\}$ is independent of the choice of bases on $\widetilde{M}$: Recall that,
for each $x\in M$, $A(x)$  lies in $\bigonk \subset \mathfrak{gl}(n+k;\R)\cong\text{End} \,T\widetilde{M}$,
the group of linear transformations on $T\widetilde{M}$.
Using a further identification: $\text{End}\, T\widetilde{M} \cong T^*\widetilde{M}\otimes T\widetilde{M}$,
we view $A(x)$ as a linear map from $T\widetilde{M}$ to itself.
From this perspective, $\pb_\alpha$ coincides with $A^\alpha$, {\it i.e.}, the normal component of $A$
as a $T\widetilde{M}$-valued function defined on $M\times T\widetilde{M}$.
Thus, Eq.  \eqref{eqn define the barred frame on M-bar} is equivalent to
\begin{equation}\label{shorthand}
\pb= (\dd f)^\sharp \oplus \nor A,
\end{equation}
where $\sharp: T^*\widetilde{M} \rightarrow T\widetilde{M}$ is the canonical bundle isomorphism
turning a $1$-form into the corresponding vector field.
This gives us an intrinsic definition of frame $\{\widetilde{\p}_a\}$.

Now we verify that $\{\pb_a\}$ is indeed an orthonormal frame.
First, using the Poincar\'{e} system \eqref{eqn_poincare} defining $f$,
together with the characterization of the semi-orthogonal
group ({\it cf.} Eq. \eqref{eqn_semi orthogonal group}),
we have
\begin{eqnarray*}
\gzero(\pb_i,\pb_j) &=& \gzero(\dd f(\p_i), \dd f(\p_j))
  = \gzero(\tth(\p_i) A, \tth(\p_j)A)\\[1mm]
&=& \tth(\p_i) A^\top \gzero A \tth(\p_j)^\top = \tth(\p_i) \gzero \tth(\p_j)^\top\\[1mm]
&=& (\epsilon_{n,\nu})^i_j :=\epsilon^i \delta^i_j.
\end{eqnarray*}
Also, for the normal directions, using the shorthand notations in Eq. \eqref{shorthand}, we have
\begin{align*}
\gzero(\pb_\alpha,\pb_\beta) = (\gzero (\nor A, \nor A))^\alpha_\beta
= \nor (A^\top \gzero A)^\alpha_\beta = (\e_{k,\tau})^\alpha_\beta
= \epsilon^\alpha\delta^\alpha_\beta.
\end{align*}
Finally, it follows from the Poincar\'{e} system \eqref{eqn_poincare} that
\begin{equation*}
\gzero(\p_i,\p_\alpha) = \gzero(\tth\cdot A(\p_i), A^\alpha)
= (\tth(\p_i))^\top (A^\top \gzero A)^\alpha = (\gzero)^\alpha_i \equiv 0,
\end{equation*}
since $\gzero$ is a diagonal matrix. The orthonormality of $\{\pb_a\}$ now follows.

\medskip
{\bf 4(b)}. Next, we identify the normal bundle induced by $f$,
written as $TM^\perp:={f^*T\widetilde{M}}/{TM}$ (Convention \ref{convention on iota}),
with the prescribed vector bundle $E$.
For this purpose, we define the following {\em identification map} on a trivialized chart $U\subset M$:
\begin{align*}
\mathcal{I}: (U	\subset M) \times (F\cong \R^k) &\longrightarrow \widetilde{M},\\
 (x, \xi)&\longmapsto f(x) + \sum_\beta \xi_\beta \widetilde{\p}_\beta.
\end{align*}

Indeed, $\dd\mathcal{I}:TU\oplus TF \rightarrow T\widetilde{M}$ coincides with $\dd f+\nor A$;
equivalently, one can write $\dd\mathcal{I}(\p_a) = \widetilde{\p}_a$ for each $a\in\{1,\ldots,n+k\}$.
In particular, $\dd\mathcal{I}(TU) \subset TM$ and $\dd\mathcal{I}(F)\subset TM^\perp$,
which indicates that the identification map $\mathcal{I}$ preserves the horizontal and vertical subspaces of the vector
bundles $TM \oplus E$ and $T\widetilde{M}$.
Moreover, as $f$ is an immersion (justified in Step $3$ above), we deduce that $\mathcal{I}$ is a diffeomorphism,
by shrinking chart $U$ if necessary. Thus, we have obtained
an identification of $E$ with $TM^\perp$ in the trivialized local charts.

In addition, by the construction of the moving frame $\{\widetilde{\p}_a\}$ on $T\widetilde{M}$ in Eq. \eqref{eqn define the barred frame on M-bar},
we have
\begin{align*}
&\mathcal{I}^*\gzero (\sum_i U^i\p_i +   \sum_\alpha U^\alpha\p_\alpha, \sum_j V^j\p_j+ \sum_\beta V^\beta \p_\beta) \\
&= \gzero(\sum_i U^i \dd f(\p_i), \sum_j V^j \dd f(\p_j))
   + \gzero(\sum_\alpha U^\alpha \widetilde{\p}_\alpha, \sum_\beta V^\beta \widetilde{\p}_\beta)\\
&= \sum_i\sum_j U^i V^j \gzero (\widetilde{\p}_i, \widetilde{\p}_j) + \sum_\alpha\sum_\beta U^\alpha V^\beta \gzero (\widetilde{\p}_\alpha, \widetilde{\p}_\beta)\\
&= g(U|_{TM},  V|_{TM}) + g^E(U|_E + V|_E)
\end{align*}
for any $U, V \in \Gamma(T\widetilde{M})$.
It follows that $\mathcal{I}$ is an isometry between $TU \oplus E$ and $T\widetilde{M}$:
\begin{equation*}
\mathcal{I}^*\gzero = g\oplus  g^E
\end{equation*}
as the block direct sum of matrices. Thus, $f$ is a local isometric immersion.

\medskip
{\bf 4(c)}. Now, we identify the second fundamental form and the normal connection induced
by $f$ with $\two$ and $\na^E$, respectively.
This is done via Cartan's formalism for the isometric immersion $f$.

Let $f: (V\subset M,g)\rightarrow (\R^{n+k}, \gzero)$ be the isometric immersion as above.
We write $\tilde{\theta}=(\tilde{\theta}^1, \ldots,{\tilde{\theta}^{n+k}})\in
\Omega^1(\R^{n+k})\cong C^\infty(\widetilde{M}; T^*\widetilde{M}\otimes T^*\widetilde{M})$ as the co-frame of $\{\widetilde{\p}_a\}_1^{n+k}$.
Recall from \S \ref{subsec:Cartan}
that the GCR system for $f$ are equivalent to the second structural system
with respect to $\widetilde{\p}$ or $\tilde{\theta}$.
In particular, the corresponding connection $1$-form on $\widetilde{M}$ for the Levi-Civita connection is
\begin{equation}\label{schematic rep of bar-W}
\widetilde{\W} = \begin{bmatrix}
\tilde\omega^i_j & \tilde\omega^\alpha_i\\[2mm]
\tilde\omega_\alpha^i & \tilde\omega^\beta_\alpha
\end{bmatrix}
= \begin{bmatrix}
\tilde\theta^j(\ta \widetilde{\na}_\bullet \widetilde\p_i)& \widetilde{S}_{\widetilde{\partial}_\alpha}\widetilde\p_i\\[2mm]
-\widetilde{S}_{\widetilde{\partial}_\alpha}{\widetilde\p_i} & \tilde{\theta}^\beta (\na^\perp_\bullet \widetilde\p_\alpha)
\end{bmatrix};
\end{equation}
see Eq. \eqref{schematic rep for W}.
It satisfies
\begin{align*}
\widetilde{\W}=\{\tilde{\omega}^a_b\}\in &\Omega^1(\littleonk)\\
&=C^\infty (\widetilde{M};T^*\widetilde{M} \otimes \littleonk),
\end{align*}
where $\widetilde{S}$ is the shape operator associated to $f$,
and $\na^\perp$ is the projection of $\widetilde{\na}$ onto the normal bundle $TM^\perp$.
Also, by the torsion-free condition of $\widetilde{\na}$,
the first structural system \eqref{eqn_first structure eq} holds:
\begin{equation}\label{zzz}
\dd\tilde{\theta} = \tilde{\theta} \wedge \widetilde{\W}.
\end{equation}
Therefore, by comparing the coordinate-wise representations of  $\W$ and $\widetilde{\W}$,
{\it i.e.}, Eqs. \eqref{schematic rep for W} and \eqref{schematic rep of bar-W},
in order to identify $(\widetilde{S}, \na^\perp)$ with $(S,\na^E)$,
it suffices to establish
\begin{equation}\label{I star W bar = W}
\mathcal{I}^*\widetilde{\W} = \W.
\end{equation}

Indeed, we pullback Eq. \eqref{zzz} under $\mathcal{I}$.
On one hand,
\begin{equation}\label{revise, 4}
\mathcal{I}^*(\dd\tilde{\theta}) = \dd(\mathcal{I}^* \tilde{\theta})
= \dd(f^* \tilde{\theta}) = \dd\tth = \tth \wedge \W,
\end{equation}
where we have used the commutativity of pullback and exterior differential,
so that $\mathcal{I}$ respects the orthogonal splitting of $T\widetilde{M}$ and $TM \oplus E$,
the duality of $\dd f(\p_i)=\widetilde{\p}_i$,
as well as the first structural system on $TM\oplus E$. On the other hand,
\begin{equation}\label{revise, 5}
\mathcal{I}^*(\tilde{\theta}\wedge \widetilde{\W})
= \mathcal{I}^* \tilde{\theta} \wedge \mathcal{I}^*\widetilde{\W} = \tth \wedge \mathcal{I}^*\widetilde{\W},
\end{equation}
owing to the distributivity of the pullback operation with respect to the wedge product,
so that $\mathcal{I}^*\tilde{\theta}=f^*\tilde{\theta}=\underaccent{\widetilde}{\Theta}$
as above.
Eq. \eqref{I star W bar = W} follows directly from Eqs. \eqref{revise, 4}--\eqref{revise, 5}.

\medskip
{\bf 4(d)}. Finally, we prove the uniqueness of local isometric immersions up to rigid motions
of the semi-Euclidean space. It is a direct consequence of the arguments in Step $3$.
Indeed, if ${f'}: (V,g)\rightarrow (\widetilde{M},\gzero)$ is another isometric immersion on $V$
(a trivialized local chart) with ${f'}({q}')$ given,
then, for any local frame $\{{\p}'_a\}$,
we can take a rigid motion that transforms both $q$ to ${q}'$ and $\{{\widetilde{\p}}_a\}$ to $\{\widetilde{\p}'_a\}$;
that is,  a translation composed with an element of $\bigonk$.
Then the argument follows from the uniqueness of solutions of the Pfaff system (which is based in turn on the uniqueness
of the maximal integral submanifold found by the Frobenius theorem),
as well as the uniqueness of solutions of the Poincar\'{e} system up to an additive constant.

We can now conclude the realization theorem in the $C^\infty$ case from Steps 4(a)--4(d).

\smallskip
\section{\,\bf Proof of Theorem \ref{thm:3.5}}
In this appendix, we prove Theorem \ref{thm:3.5} (the generalized quadratic theorem on LCA groups) for the theory of compensated compactness,
a further extension of the classical quadratic theorem  in  \cite{Murat,tartar}.
First, we point out that the underlying strategy for the general case is similar to that
in \cite{Murat,tartar}, in which separate estimates are derived in the Fourier space $\hg$ for the {\em low-frequency region} ({\it i.e.}, in
a compact set $\Xi$ around $0$) and the {\em high-frequency region} ({\it i.e.}, in the non-compact set $\hg \setminus \Xi$).
Assumptions (i)--(iii) are required only for controlling the high-frequency region.
Notice that the high-frequency region always exists unless $\hg$ is compact,
which is equivalent to the condition that $G$ is discrete,
for which Theorem \ref{thm:3.5} trivially holds.

\medskip
\noindent
{\bf Proof of Theorem {\rm \ref{thm:3.5}}}.
By substituting $u_\varepsilon$ with $u_\varepsilon - u$ as in the proof
of Theorem \ref{thm: generalized quadratic theorem on manifolds},
it suffices to assume that $u\equiv 0$.
We divide the proof into five steps.

\medskip
\noindent
{\bf 1.} Since $u_\varepsilon \in L^2_c(G; \C^J)$ implies $u_\varepsilon \in L^1(G; \C^J)$, by the {\em Riemann--Lebesgue lemma} on LCA groups,
we can find a compact set $\Xi \Subset \hat{G}$ such that $|\hat{u}_\varepsilon(\xi)| \leq \alpha$ for $\xi \in \hg \setminus \Xi$,
for each $\alpha >0$ ({\it cf}. Tao \cite{Tao}).
In particular, $\sup{\{|\hat{u}_\varepsilon(\xi)|\,:\,\xi \in \hg\}} \leq M$.
On the other hand, for any $\phi \in L^2(G; \C^J)$, by the Plancherel formula, we have
\begin{align*}
\Big|\int_{G} u_\varepsilon \phi \,\dd\mu_G\Big| &= \Big|\int_{\hg} \hat{u}_\varepsilon \hat{\phi} \,\dd\mu_{\hg}\Big|,
\end{align*}
which converges to zero as $\varepsilon \rightarrow 0$ by assumption (C1).
Thus, choosing $\hat{\phi} = \overline{{\rm sgn} (\hat{u}_\varepsilon)}\,\chi_{\Xi}$
with ${\rm sgn}(z):=\frac{z}{|z|}$ for $z\neq 0$, we obtain
\begin{equation*}
\int_\Xi |\hat{u}_\varepsilon|^2 \,\dd\mu_{\hg} \leq M \int_\Xi |\hat{u}_\varepsilon| \,\dd\mu_{\hg}
= M \Big|\int_{\hg} \hat{u}_\varepsilon \hat{\phi} \,\dd\mu_{\hg} \Big| \longrightarrow 0.
\end{equation*}
Therefore, for the quadratic polynomial $Q$, we deduce
\begin{equation}\label{low frequency: LCA group}
\int_{\Xi} |Q \circ \hat{u}_\varepsilon| \,\dd\mu_{\hg} \longrightarrow 0.
\end{equation}
For the subsequent development,
notice that there is a freedom of enlarging $\Xi$:
It can be chosen as any large enough (with respect to $\mu_\hg$) compact subset of $\hg$ containing $0$.

\medskip
\noindent
{\bf 2.}
In this step, we establish the following claim:

\smallskip
\noindent
{\em Claim}: {\it Given any $\delta >0$ and any compact subset $\mathcal{K} \Subset \hg$ such that
$0 \notin {\mathcal{K}}$, there exists a constant $C_{\delta,\mathcal{K}} \in (0,\infty)$ so that,
for any $\lambda \in \C^J$ and $\eta \in \mathcal{K}$,
\begin{equation}\label{claim in LCA group}
{\rm Re} \{Q(\lambda)\} \geq -\delta |\lambda|^2 - C_{\delta,\K}|m(\eta)(\lambda)|^2,
\end{equation}
provided that ${\rm Re} (Q) \geq 0$ on $\Lambda_T$.
Meanwhile, under the same conditions for $\delta$ and $\K$,
\begin{equation}\label{claim in LCA group, imaginary}
{\rm Im} \{Q(\lambda)\}\geq -\delta |\lambda|^2 - C_{\delta,\K}|m(\eta)(\lambda)|^2
\end{equation}
when ${\rm Im} (Q) \geq 0$ on $\Lambda_T$.
Notice that such a compact subset $\K$ exists, since $\hg$ has locally compact topology.
}

\smallskip	
Indeed, observe that the claim holds for $\lambda = 0$.
For $\lambda \neq 0$, we prove by contradiction.
If the statement were false, there would exist $\delta_0 > 0$ such that,
for each $n \in \mathbb{N}$, there exist $\lambda_n \in \C^J$ and $\eta_n \in  \mathcal{K}$ so that
\begin{equation}\label{contradiction in LCA group}
{\rm Re} \{Q(\lambda_n)\} < -\delta_0 |\lambda_n|^2 - n |m(\eta_n) (\lambda_n)|^2.
\end{equation}
Notice that this inequality is $2$-homogeneous in $\lambda_n$;
in particular, it is invariant under the scaling: $\lambda_n \mapsto c{\lambda}_n$ for any $c \in \C\setminus \{0\}$.
Thus, without loss of generality, we may require $|\lambda_n| = 1$ for all $n$,
so that $\{\lambda_n\}$ converges to some $\lambda_\infty \in \C^J$ of norm $1$, after passing to a subsequence.

In this case, $|{\rm Re} (Q(\lambda_n))|$ is bounded uniformly in $n$ (say, by $C_0$) so that
\begin{equation*}
n | m(\eta_n) (\lambda_n)|^2 \leq C_0 - \delta_0.
\end{equation*}
This forces $|m(\eta_\infty)(\lambda_\infty)| =0$, where $\eta_\infty \in\K$ is a limit of $\{\eta_n\}$,
after passing to a further subsequence if necessary.
Indeed, the subsequential convergence is guaranteed by the fact that $\hg$ is Hausdorff,
which is a part of the definition of LCA groups.
The assumptions on $\K$ ensure that $\eta_\infty \neq 0$.
Thus, by the definition of the cone in Eq. \eqref{def cone of T, LCA group}, $\lambda_\infty\in\Lambda_\T$.
However, this implies
\begin{equation*}
 {\rm Re} \{Q(\lambda_\infty)\} \leq -\delta_0,
\end{equation*}
which contradicts the assumption that ${\rm Re} (Q) \geq 0$ on $\Lambda_\T$.
Thus, the {\em claim} is proved for ${\rm Re} (Q)$.
The arguments for ${\rm Im}(Q)$ are exactly the same, hence are omitted here.

\medskip
\noindent
{\bf 3.} Now, employing the {\it claim} in Step 3, we prove the following statement:
{\it Whenever ${\rm Re}(Q) \geq 0$ on $\Lambda_\T$,
\begin{equation}\label{liminf inequality for LCA group}
\liminf_{\varepsilon \rightarrow 0} \int_{\hg \setminus \Xi} {\rm Re} (Q \circ \hat{u}_\varepsilon) \,\dd\mu_{\hg} \geq 0.
\end{equation}
Similarly, for ${\rm Im}(Q)\geq 0$ on $\Lambda_\T$,
\begin{equation}\label{liminf inequality for LCA group--imaginary part}
\liminf_{\varepsilon \rightarrow 0} \int_{\hg \setminus \Xi} {\rm Im} (Q \circ \hat{u}_\varepsilon) \,\dd\mu_{\hg} \geq 0.
\end{equation}
}

To prove this statement, we invoke assumption (C2) on the Fourier multiplier.
As $\{[\Phi^*m]\hat{u}_\varepsilon\}$ is pre-compact in $L^2(\hg; \C^J)$
and $\{\hat{u}_\varepsilon\}$ converges
to zero weakly in $L^2$ (by the Plancherel formula),
we have
\begin{equation*}
\int_{\hg \setminus \Xi}\big|m(\Phi(\xi))\hat{u}_\varepsilon(\xi)\big|^2\,\dd\mu_\hg(\xi)\longrightarrow 0.
\end{equation*}
Take $\eta = \Phi(\xi) \in \K$ and $\lambda = \hat{u}_\varepsilon(\xi) \in \C^J$ in Eq. \eqref{claim in LCA group}
in Step $2$. It shows that, for each $\delta > 0$, there exists $0< C_{\delta, \K} <\infty$ such that
\begin{equation*}
{\rm Re}(Q\circ \hat{u}_\varepsilon(\xi)) > -\delta |\hat{u}_\varepsilon(\xi)|^2 - C_{\delta,\K} \big|m(\Phi(\xi)) \hat{u}_\varepsilon(\xi)\big|^2
\qquad \text{ for } \xi \in \hg\setminus\Xi.
\end{equation*}
Then,  integrating over $\hg\setminus\Xi$ and sending $\varepsilon\rightarrow 0$, we have
\begin{equation*}
\liminf_{\varepsilon\rightarrow 0}\int_{\hg \setminus \Xi} {\rm Re}(Q \circ \hat{u}_\varepsilon)\,\dd\mu_{\hg}
\geq -\delta  \sup_{\varepsilon \geq 0}\|\hat{u}_\varepsilon\|^2_{L^2(\hg \setminus \Xi)} \geq -\delta M
\end{equation*}
for a universal constant  $M < \infty$, where we have used the precompactness of $\{\hat{u}_\varepsilon\}$ in $L^2(\hg ; \C^J)$,
which is implied by assumption (C1) and the Plancherel formula.
As $\delta >0$ is arbitrary, Eq. \eqref{liminf inequality for LCA group} is proved.
The proof for the imaginary part, {\it i.e.}, Eq. \eqref{liminf inequality for LCA group--imaginary part},
holds analogously.

\medskip
\noindent
{\bf 4.}
To conclude the theorem, note that, by changing $Q \mapsto -Q$ in Eq. \eqref{liminf inequality for LCA group},
the following inequality holds:
\begin{equation}\label{B.9a}
\limsup_{\varepsilon \rightarrow 0} \int_{\hg \setminus \Xi} {\rm Re} (Q \circ \hat{u}_\varepsilon) \,\dd\mu_{\hg}
\leq 0 \qquad \text{ for }  {\rm Re}(Q) \leq 0  \text{ on } \Lambda_\T.
\end{equation}
By assumption (C3), {\it i.e.}, ${\rm Re}(Q) = 0$ on $\Lambda_\T$,
inequality \eqref{B.9a} together with \eqref{liminf inequality for LCA group}
verifies the assertion outside a compact set $\Xi$,
{\it i.e.}, $\lim_{\varepsilon\rightarrow 0} \int_{\hg \setminus \Xi} {\rm Re} (Q \circ \hat{u}_\varepsilon)\,\dd\mu_{\hg} = 0$.
Moreover, in Step $1$, the same result on $\Xi$ has been established in Eq. \eqref{low frequency: LCA group}.
Thus, in view of the Plancherel formula, we have
\begin{equation*}
\lim_{\varepsilon \rightarrow 0} \int_{G}{\rm Re} (Q \circ u_\varepsilon)\,\dd\mu_G = 0.
\end{equation*}
As in Steps 2--3, the analogous statement for ${\rm Im}(Q \circ u_\varepsilon)$ can be established similarly.
This completes the proof.
\end{appendices}

\begin{acknowledgements}
The authors would like to thank Professors John Ball, Lawrence Craig Evans, Marshall Slemrod,
and Dehua Wang
for helpful discussions. This paper is finalized during Siran Li's stay as a CRM--ISM postdoctoral fellow
at the Centre de Recherches Math\'{e}matiques, Universit\'{e} de Montr\'{e}al and Institut des Sciences Math\'{e}matiques;
Siran Li would like to thank these institutions for their hospitality.
\end{acknowledgements}

 \section*{Conflict of interest}

 The authors declare that they have no conflict of interest.

% Non-BibTeX users please use

\end{document}